\documentclass[a4paper,11pt]{amsart}
\usepackage{amssymb}
\usepackage[alwaysadjust]{paralist}
\usepackage[capitalize]{cleveref}
\usepackage{graphicx}
\usepackage{float}
\numberwithin{equation}{section}
\newtheorem{claim}[equation]{Claim}
\newtheorem{cor}[equation]{Corollary}
\newtheorem{lem}[equation]{Lemma}
\newtheorem{prop}[equation]{Proposition}
\newtheorem{thm}[equation]{Theorem}
\theoremstyle{definition}
\theoremstyle{plain}
\newtheorem*{thm*}{Main Theorem}
\newtheorem{defn}[equation]{Definition}

\newtheorem{rem}[equation]{Remark}
\newtheorem{rems}[equation]{Remarks}
\newtheorem{que}[equation]{Question}



\newcommand{\sign}{\operatorname{sign}}



\begin{document}

\title[Length-Angle spectra]{Rigidity of Length-angle spectrum
	for closed hyperbolic surfaces}

\author{Sugata Mondal}
\address
{Indiana University,
Rawles Hall, 831 E 3rd Street,
Bloomington, Indiana\\}
\email{sumondal\@@iu.edu}

\thanks{\emph{Acknowledgments.} I would like to thank Chris Judge for all the discussions that I had with him on this problem. I am grateful to Mengda Lei and Chris Judge for proof checking preliminary versions of the paper. I am grateful to the mathematics department of Indiana University, Bloomington for their support and hospitality.}

\date{\today}
\subjclass{53C24, 53C22, 32G15}
\keywords{Hyperbolic surfaces, Length-angle spectrum, Rigidity}

\maketitle

\begin{abstract}
The rigidity of {\it marked length spectrum} of closed hyperbolic surfaces due to Fricke-Klein \cite{F-K} has been the motivation of many different rigidity results specially for manifolds of negative curvature. From the works of Vigneras \cite{V}, Sunada \cite{Su} and many other authors this result is far from being true for the {\it unmarked length spectrum.} The purpose of this paper is to introduce a closely a related \emph{unmarked spectrum} the \emph{length-angle spectrum} and show that it determines the surface uniquely.
\end{abstract}
\section*{Introduction}
Let $S_g$ be a closed orientable Riemannian surface of genus $g$. For $g \ge 2$ there are many hyperbolic metrics that $S_g$ can be equipped with. Due to this sharp contrast to Mostow's rigidity theorem in higher dimensions it is an important question what kind of geometric information of a hyperbolic metric on $S_g$ can uniquely determine the metric (up to isometry).
\subsection*{Length spectrum rigidity}
Let $\mathcal{C}(S_g)$ be the space of closed curves on $S_g$ up to homotopy. Let $\mathcal{M}_g$ denote the {\it moduli space} of hyperbolic metrics on $S_g$ up to isometry. For $S \in \mathcal{M}_g$ and $\gamma \in \mathcal{C}(S_g)$ let $\ell_S(\gamma)$ denote the length of the closed geodesic freely homotopic to $\gamma$ on $S$. Since every closed essential curve on $S$ is freely homotopic to a unique closed geodesic, we denote by $\mathcal{C}(S)$ the space of all closed geodesics on $S$. The marked length spectrum of $S$ is defined to be the map $$\ell: \mathcal{C}(S) \to \mathbb{R}$$ which sends a geodesic in $\mathcal{C}(S)$ to its length. A classical result dating back to Fricke-Klein \cite{F-K} says that the  marked length spectrum of a hyperbolic metric on $S_g$ determines the metric. In \cite{B-K} the possibility of extensions of this rigidity result to negatively curved manifolds was first suspected. It was subsequently confirmed by Otal \cite{O} and Croke \cite{C} for surfaces (with variable negative curvature). Since then it has been extended in many different directions.  

The sequence of lengths of all closed geodesics on $S$, counting multiplicity and without any marking, is called the {\it length spectrum} of the surface. There is a well-known connection between the length and the Laplace spectrum of $S$ due to Huber \cite{Hu}: {\it The length and the Laplace spectrum of a hyperbolic metric on $S_g$ determines each other.} Rigidity results related to the length spectrum probably appeared for the first time in the work \cite{G} of I. M. Gel'fand who showed that there is no continuous family in $\mathcal{M}_g$ for which the length spectra stays the same. He further conjectured that the length spectrum determines the hyperbolic metric. The first counter examples to this conjecture appear in \cite{V}. Later Sunada \cite{Su} gave an elegant method for constructing such counter examples.

\begin{rem}
Rigidity questions related to (unmarked) length spectrum can be traced back to the question popularized by M. Kac \cite{K}: ~~ `Can one hear the shape of a drum ?' 
\end{rem}

The best possible result for the length spectrum is due to McKean \cite{M} that says that only finitely many non-isometric closed hyperbolic surfaces of a fixed genus can have the same length spectrum i.e. the map from $L: \mathcal{M}_g \to \mathbb{R}^{\mathbb{N}}$ that sends a metric to its length spectrum is `finite to one'. 
In \cite{W}, Wolpert gave another proof of this fact. He moreover showed that there is a proper analytic sub-variety $\mathcal{V}_g$ of $\mathcal{M}_g$ that contains all genus $g$ hyperbolic surfaces that are not determined by their length spectrum. Hence Gel'fand's conjecture, although false in general, is true in the generic sense.

\subsection*{Simple length spectrum}
Let $\mathcal{G}(S_g) \subset \mathcal{C}(S_g)$ be the space of simple closed curves on $S_g$ up to homotopy. $\mathcal{G}(S_g)$ has been an important object of study in the literature (see \cite{F-M}, \cite{M-M}). For $S \in \mathcal{M}_g$ let $\mathcal{G}(S) \subset \mathcal{C}(S)$ be the space of simple closed geodesics on $S$. The sequence of lengths of these simple closed geodesics on $S$, counting multiplicity, is called the {\it simple length spectrum} of $S$. 

Wolpert's proof in \cite{W} of McKean's result \cite{M} applies to the simple length spectrum providing that for a fixed $g$ there are only finitely many surfaces in $\mathcal{M}_g$ that have the same simple length spectrum. It would be very interesting to see how far can the simple length spectrum determine the surface. 
Since the whole length spectrum is not sufficient to determine the metric it is probably true that simple length spectrum is not sufficient either. The author could not locate any literature involving this question. 
\subsection*{Angles between simple closed geodesics}
Now we consider another geometric {\it collection} related to pairs of simple closed geodesics, the angles between them. Suppose $\gamma, \delta \in \mathcal{G}(S)$ intersect each other $\iota(\gamma, \delta)$ times. We consider the set  $\Theta_S(\gamma, \delta)$ of $\iota(\gamma, \delta)$ angles of intersection between $\gamma$ and $\delta$ where each angle is measured in the counter-clockwise direction from $\gamma$ to $\delta$ and the sequence is recorded along $\gamma$ as they occur.

Defined that way $\Theta_S(\gamma, \delta)$ is a point in the product of $\iota(\gamma, \delta)$ copies of $(0, \pi)$. Since we do not record the points of intersection between $\gamma$ and $\delta$ it is clear that $\Theta_S(\gamma, \delta)$ is defined only up to the action of the cyclic permutation $(1, 2, ..., \iota(\gamma, \delta))$. The set of angles in $\Theta_S(\gamma, \delta)$ forgetting the ordering and multiplicities would be denoted by $\Phi(\gamma, \delta)$.

\begin{rem}
	For a collection of simple closed geodesics and simple geodesic arcs $\alpha, \beta_1, \cdots, \beta_k$ on $S$ the above definition equally works to define $\Theta_S(\alpha, \cup_{i=1}^k \beta_i)$. When $\alpha$ is a geodesic arc we list the angles beginning from one of the end points of $\alpha$ through the other.  
\end{rem}
\begin{defn}
	A tuple $(\theta_1, \cdots \theta_l)$ is called an ordered subset of the angle set $\Theta_S(\alpha, \cup_{i=1}^k \beta_i)$ if the ordering of $\theta_i$s respect that of $\Theta_S(\alpha, \cup_{i=1}^k \beta_i)$.
\end{defn}

Angles between more general types of geodesics has been studied in the literature. One particular case is the self-intersection angles of non-simple closed geodesics. An interesting statistical behavior of these self-intersection angles was obtained by Pollicott and Sharp in \cite{P-S}. 
\subsection*{Length-Angle Spectrum}
The main object of our study in this paper is the {\it length-angle spectrum} $\mathcal{L}\Theta(S)$ that we define as the collection 
\begin{equation*}
	\{(\ell_S(\gamma), \ell_S(\delta), \Theta_S(\gamma, \delta)): \gamma, \delta \in \mathcal{G}(S)\}.
\end{equation*}
The main result of this paper is the following.
\begin{thm*}\label{main}
	Let $S, M$ be two closed hyperbolic surfaces of genus $g$ 
	such that their length-angle spectrum coincide i.e. $\mathcal{L}\Theta(S) = \mathcal{L}\Theta(M)$. Then $S$ is isometric to $M$.
\end{thm*}
We now go over the idea of the proof. To motivate ourselves we consider two simple closed geodesics $\alpha$ and $\beta$ on $S$ with $\iota(\alpha, \beta)= 1$. It is not that difficult to see that a thickened neighborhood of $\alpha \cup \beta$ in $S$ determines a unique compact one holed torus $\mathcal{T}(\alpha, \beta) \subset S$ with geodesic boundary. A simple but important observation is that the triple $(\ell_S(\beta_1), \ell_S(\beta_2), \Theta_S(\alpha, \beta))$ determines $\mathcal{T}(\alpha, \beta)$. In particular if $\alpha'$ and $\beta'$ be two simple closed geodesics on another hyperbolic surface $S'$ with
\begin{equation*}
	(\ell_S(\alpha), \ell_S(\beta), \Theta_S(\alpha, \beta)) = (\ell_{S'}(\alpha'), \ell_{S'}(\beta'), \Theta_{S'}(\alpha', \beta'))
\end{equation*}
then the corresponding compact one holed torus with geodesic boundary $\mathcal{T}(\alpha', \beta')$ in $S'$ is an isometric copy of $\mathcal{T}(\alpha, \beta)$ (see Lemma \ref{punct-torus}). In the first step towards the proof of \textbf{Main Theorem} we formulate a rigidity criteria of similar type that works for the whole surface. We consider a simple closed non-separating geodesic $\gamma_0$ on $S$ and construct a pants decomposition of $S$ such that different geodesics in the pants decomposition are distinguishable from the angles they make with $\gamma_0$. More precisely,
\begin{thm}\label{pants}
	There is a pants decomposition $\mathcal{P}_0 = \{ \alpha_i: i= 1, 2,..., 3g-3\}$ of $S$ that satisfies the following: $(1)$ $\gamma_0$ and $\alpha_i$ intersect minimally i.e. for $\alpha_i$ non-separating $\iota(\gamma_0, \alpha_i) = 1$ and for $\alpha_i$ separating $\iota(\gamma_0, \alpha_i)=2$, and $(2)$ the sets of angles $\Phi(\gamma_0, \alpha_i)$ are mutually disjoint i.e. $\Phi(\gamma_0, \alpha_i) \cap \Phi(\gamma_0, \alpha_j) = \emptyset$ for $i \neq j$.
\end{thm}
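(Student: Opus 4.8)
The plan is to build $\mathcal{P}_0$ curve by curve, choosing the $3g-3$ curves one at a time while keeping control, at every stage, of both the intersection pattern with $\gamma_0$ and the angles made with $\gamma_0$. The mechanism that lets us adjust angles without breaking condition $(1)$ is the following: if $c$ is a simple closed curve disjoint from $\gamma_0$, the Dehn twist $T_c$ can be chosen supported in a neighborhood of $c$ missing $\gamma_0$, so that $T_c(\gamma_0)=\gamma_0$ and hence $\iota(\gamma_0, T_c^{\,n}\delta)=\iota(\gamma_0,\delta)$ for every simple closed curve $\delta$. Twisting along such a $c$ therefore preserves condition $(1)$ for all curves at once, while changing the free homotopy class — hence the geodesic representative, hence the angle set with $\gamma_0$ — of any $\delta$ that meets $c$.

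\emph{Step 1 (a seed decomposition satisfying $(1)$).} I would first write down one pants decomposition $\mathcal{Q}=\{\beta_1,\dots,\beta_{3g-3}\}$ with $\iota(\gamma_0,\beta_i)\in\{1,2\}$, the value being $1$ for non-separating $\beta_i$ and $2$ for separating $\beta_i$. This is purely topological. Cutting $S$ along $\gamma_0$ gives $\Sigma\cong\Sigma_{g-1,2}$, and a curve meeting $\gamma_0$ minimally once (resp. twice) is recorded by a suitable simple arc system in $\Sigma$; an explicit ``chain-type'' decomposition — the genus-two model being the dumbbell, where $\gamma_0$ runs once through each non-separating curve and twice through each separating one — does the job, the only constraint being the $\Z/2\Z$-homology parity $\sum_i\iota(\gamma_0,\beta_i)\equiv[\gamma_0]\cdot\sum_i[\beta_i]\pmod 2$, which merely restricts the admissible combinatorial type and is readily arranged. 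I would carry out this bookkeeping in detail; it is elementary.

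\emph{Step 2 (installing $(2)$ inductively).} Fix an ordering of $\mathcal{Q}$ and assume $\alpha_1,\dots,\alpha_{i-1}$ have been produced so that $\{\alpha_1,\dots,\alpha_{i-1},\beta_i,\dots,\beta_{3g-3}\}$ is again a pants decomposition, $(1)$ holds for every $\alpha_k$, and $\Phi(\gamma_0,\alpha_1),\dots,\Phi(\gamma_0,\alpha_{i-1})$ are pairwise disjoint. Let $X_i$ be the component containing $\beta_i$ of the surface obtained by cutting $S$ along all of $\alpha_1,\dots,\alpha_{i-1},\beta_{i+1},\dots,\beta_{3g-3}$; it is a four-holed sphere or a one-holed torus, and $\gamma_0\cap X_i$ is a finite union of simple arcs. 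Pick a simple closed curve $c\subset X_i\setminus\gamma_0$ with $\iota(c,\beta_i)>0$ and put $\alpha_i:=T_c^{\,n}\beta_i$. Because $T_c^{\,n}$ fixes every pants curve other than $\beta_i$ and is a homeomorphism, $\{\alpha_1,\dots,\alpha_i,\beta_{i+1},\dots,\beta_{3g-3}\}$ is still a pants decomposition; by the observation above $\iota(\gamma_0,\alpha_i)$ stays minimal; and $\Phi(\gamma_0,T_c^{\,n}\beta_i)$ varies with $n$, so some $n$ makes it disjoint from the finite set $\bigcup_{k<i}\Phi(\gamma_0,\alpha_k)$. After $3g-3$ steps, $\mathcal{P}_0:=\{\alpha_1,\dots,\alpha_{3g-3}\}$ satisfies $(1)$ and $(2)$.

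The hard part lies in the two assertions of Step 2 that are not formal. The minor one is the existence of the twisting curve $c\subset X_i\setminus\gamma_0$ with $\iota(c,\beta_i)>0$: one must check that the arcs $\gamma_0\cap X_i$ do not fill the small surface $X_i$. This is clear for most $i$, but for the last curve the room is tight, so I would choose the seed $\mathcal{Q}$ in Step 1 with this in mind, so that $\gamma_0$ enters each $X_i$ in a controlled, non-filling way. The genuinely serious point is the claim that $\Phi(\gamma_0,T_c^{\,n}\beta_i)$ really varies with $n$, i.e. that only finitely many $n$ are forbidden. As $n\to\infty$ the geodesic representative of $T_c^{\,n}\beta_i$ converges, uniformly on compact subsets, to $c^{*}$ together with a bi-infinite geodesic spiralling onto $c^{*}$ at both ends and crossing $\gamma_0$; so $\Phi(\gamma_0,T_c^{\,n}\beta_i)$ converges to the angle set of that limiting configuration, and the worry is that it becomes eventually constant, pinned at the limit value. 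I would dispose of this by letting $c$ also range over the infinitely many simple closed curves lying in $X_i\setminus\gamma_0$ and showing that the limiting intersection angle with $\gamma_0$ is a non-constant function of $c$ — equivalently, that the set of intersection angles realized by simple closed geodesics crossing $\gamma_0$ minimally and supported in $X_i\setminus\gamma_0$ is infinite. Proving this is the technical heart of Theorem \ref{pants}; I expect to obtain it from a continuity argument for the intersection angle, viewed as a function on the space of measured geodesic laminations carried by $X_i\setminus\gamma_0$, since constancy of that angle would force an untenable rigidity of the geodesic flow transverse to $\gamma_0$.
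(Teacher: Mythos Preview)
Your strategy matches the paper's: build the decomposition curve by curve, and at each step Dehn-twist the candidate $\beta_i$ along an auxiliary curve $\eta$ disjoint from $\gamma_0$ until $\Phi(\gamma_0,\cdot)$ avoids the finitely many previously used angles. The gap is exactly where you locate it, but your proposed fix --- let $c$ range over infinitely many curves and invoke continuity on measured laminations to show the limiting angle is non-constant in $c$ --- is both incomplete (you only ``expect'' it to work) and needlessly roundabout. The paper's resolution is direct: a \emph{single} twist along $\eta$ \emph{strictly} changes each intersection angle with $\gamma_0$ (this is Claim~\ref{pantsrotate}). One lifts $\gamma_0$, $\beta_i$, and $\mathcal{D}_\eta(\beta_i)$ to $\mathbb{H}^2$; the lifted homotopy between $\beta_i$ and $\mathcal{D}_\eta(\beta_i)$ exhibits a non-degenerate geodesic triangle two of whose angles are the old and new intersection angles with the fixed lift of $\gamma_0$, and positivity of its area via the Gauss--Bonnet formula~\eqref{areapolygon} forces the strict inequality. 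Iterating, $n\mapsto\Phi(\gamma_0,\mathcal{D}_\eta^{\,n}\beta_i)$ is strictly monotone, hence injective, so any finite forbidden set is avoided after finitely many twists --- with no limiting laminations and no variation of the twisting curve.

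As for the existence of the twisting curve, the paper relaxes your constraint: $\eta$ need only miss $\gamma_0$ and the \emph{already constructed} $\alpha_j$, not the yet-unbuilt ones. It builds the $2g-2$ non-separating curves first (when the complement of the chosen curves is still large and $\eta$ is easy to find), then handles each remaining separating curve inside its four-holed sphere, where one of the two other essential curve classes is always disjoint from the two $\gamma_0$-arcs and meets $\beta$. Confining $c$ to your $X_i$ is workable too, but then you do need to carry out the case analysis you defer.
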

\begin{rems}
$(i)$ We shall see in the proof of this theorem that there are infinitely many such pants decompositions. 

$(ii)$ It would be interesting to see if one can construct a pants decomposition that makes distinct angles at each intersection. In our construction, for $\alpha_i$ separating, the two angles in $\Phi(\gamma_0, \alpha_i)$ may be identical.
\end{rems}
With such a pants decomposition at hand our marked rigidity result is:
\begin{thm}\label{markedrigidity}
Let $S'$ be a closed hyperbolic surface of genus $g$. Let $\gamma'_0$ be a simple closed geodesic on $S'$ and $\mathcal{P}'_0 = \{ \alpha'_i: i =1, \cdots, 3g-3 \}$ be a pants decomposition of $S'$ such that for each $i =1, 2, ..., 3g-3:$ $(i)$ $\ell_S(\alpha_i) = \ell_{S'}(\alpha'_i)$, 
$(ii)$ $\Theta_S(\gamma_0, \alpha_i) = \Theta_{S'}(\gamma'_0, \alpha'_i)$ and 
$(iii)$ $\Theta_S(\gamma_0, \cup_{i=1}^{3g-3} \alpha_i) = \Theta_{S'}(\gamma'_0, \cup_{i=1}^{3g-3} \alpha'_i)$ 
where $\gamma_0$ and $\mathcal{P}_0 = \{ \alpha_i: i= 1, 2, ..., 3g-3\}$ are as in Theorem \ref{pants}. Then $S'$ is isometric to $S$.
\end{thm}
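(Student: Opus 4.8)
The plan is to reconstruct the marked metric on $S'$ from the data $(\gamma_0', \mathcal{P}_0')$ piece by piece, using the pants-decomposition Fenchel–Nielsen coordinates associated to $\mathcal{P}_0$ on $S$ and to $\mathcal{P}_0'$ on $S'$. By hypothesis $(i)$ the boundary lengths of every pair of pants in the two decompositions agree, so the individual pants are isometric; what remains is to show that the twist parameters agree. The key geometric input is that $\gamma_0$ (respectively $\gamma_0'$) is a simple closed geodesic meeting each $\alpha_i$ minimally, so $\gamma_0$ decomposes into a union of geodesic arcs, each arc crossing exactly one pair of pants between two of its boundary curves (or entering and leaving through the same curve, in the separating case), and the angle data $\Theta_S(\gamma_0,\alpha_i)$ together with $\Theta_S(\gamma_0,\cup_i\alpha_i)$ records, in a consistent cyclic order along $\gamma_0$, both which boundary curve each crossing point lies on and at what angle $\gamma_0$ crosses it. Condition $(iii)$ is exactly what lets us transport the global combinatorial pattern of crossings along $\gamma_0$ from $S$ to $S'$: the cyclic sequence of angles along $\gamma_0$, labelled by which $\alpha_i$ each belongs to (labels being recoverable because the sets $\Phi(\gamma_0,\alpha_i)$ are mutually disjoint by Theorem \ref{pants}), matches that along $\gamma_0'$.

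First I would set up, for each pair of pants $P$ in $\mathcal{P}_0$, the local picture: the arcs of $\gamma_0\cap P$ are geodesic arcs with endpoints on $\partial P$, and for a fixed pair of pants with fixed boundary lengths the isometry type of an orthogeodesic-type arc, or more precisely a geodesic arc with prescribed endpoints-on-boundary data, is determined by the two angles it makes with the boundary curves at its two endpoints (this is the pair-of-pants analogue of Lemma \ref{punct-tory}—pure hyperbolic trigonometry in a right-angled hexagon or its degenerations). So from the angle pair at the two ends of each arc of $\gamma_0$, all of which are read off from the spectral data, we recover the position of each endpoint of each arc \emph{within} its boundary geodesic, i.e. the relative cyclic positions along $\alpha_i$ of all the points $\gamma_0\cap\alpha_i$. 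Doing this on both sides of each $\alpha_i$ — i.e. from the pair of pants on one side and from the pair of pants on the other side — pins down the gluing: the twist parameter along $\alpha_i$ is precisely the offset between the position predicted from one side and the position predicted from the other, and since each arc of $\gamma_0$ crossing $\alpha_i$ continues into the adjacent pair of pants, matching the arc-endpoint data across $\alpha_i$ in the prescribed order (from $(iii)$) forces the twist on $S'$ to equal the twist on $S$, modulo a global $2\pi$ (i.e.\ full Dehn twist) ambiguity.

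That global ambiguity is the step I expect to be the main obstacle, and I would resolve it by using the \emph{length} of $\gamma_0$ itself, which is encoded in the spectrum (take $\gamma=\delta=\gamma_0$, or more safely recover $\ell(\gamma_0)$ from the intersection data with some $\alpha_i$). The point is that $\gamma_0$ is a single closed geodesic obtained by concatenating all these arcs, and its total length is a strictly monotone function of the accumulated twisting: performing $k$ extra Dehn twists along some $\alpha_i$ changes $\ell(\gamma_0)$ monotonically (by the standard collar/Dehn-twist length-growth estimate), so the value $\ell_S(\gamma_0)=\ell_{S'}(\gamma_0')$ eliminates the discrete freedom and forces $k=0$ on every curve. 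One has to be careful that the twisting directions along the various $\alpha_i$ don't conspire so that the length contributions cancel; here I would invoke convexity/strict monotonicity of geodesic length along twist deformations (Kerckhoff, Wolpert) applied to the earthquake path joining the two FN-coordinate points, which rules out cancellation. Combining: all pants are isometric by $(i)$, all twist parameters agree by the arc-matching argument plus the length of $\gamma_0$, hence the two Fenchel–Nielsen coordinate vectors coincide and $S'$ is isometric to $S$.

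Two points require care in the write-up and I would flag them as lemmas: (a) that the labels are genuinely recoverable — this uses Theorem \ref{pants}(2), and in the separating case where $\Phi(\gamma_0,\alpha_i)$ may have a repeated angle one must check the two crossing points are still distinguishable by their position in the cyclic order along $\gamma_0$ provided by $(iii)$; and (b) the pair-of-pants arc rigidity statement, which I would prove by cutting the pair of pants along a seam into two right-angled hexagons and writing the arc's endpoint positions and length in terms of the two angles via the hexagon cosine rule, handling the degenerate cases (arc with both endpoints on the same cuff, arc whose endpoint coincides with a seam foot) separately but routinely.
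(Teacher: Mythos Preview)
Your overall plan --- recover the cyclic pattern of crossings from (iii) and the disjointness of the $\Phi(\gamma_0,\alpha_i)$, prove an arc-rigidity lemma in a pair of pants, and then conclude that the gluings agree --- is exactly the paper's strategy. One difference worth noting: for arc rigidity the paper gives a two-line Gauss--Bonnet argument (Lemma~\ref{Ypiece}) rather than hexagon trigonometry: if two distinct simple arcs in $Y$ had the same boundary angles, the geodesic rectangle or triangle they bound together with subarcs of $\partial Y$ would have area zero by~\eqref{areapolygon}.

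The substantive problem is your ``full Dehn twist ambiguity'' step. There is no such ambiguity in the isometry type. Once the arc in each pair of pants is pinned down (and Lemma~\ref{Ypiece} says it is determined outright, not merely up to a discrete choice), the gluing along $\alpha_i$ is a rotation of a circle fixed by where one marked point goes; shifting that rotation by a full period $\ell(\alpha_i)$ is the \emph{identity map}, hence yields the same surface, not just an isometric one. The real-valued Fenchel--Nielsen twist records a marking, and changing it by $\ell(\alpha_i)$ amounts to composing the marking with a Dehn twist --- irrelevant for the conclusion ``$S'$ is isometric to $S$''. Your proposed fix also reaches outside the hypotheses: $\ell(\gamma_0)$ is not among the data (i)--(iii) of Theorem~\ref{markedrigidity}, and ``take $\gamma=\delta=\gamma_0$ in the spectrum'' conflates this theorem with the Main Theorem. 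The Kerckhoff--Wolpert detour should simply be deleted.

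The only genuine residual ambiguity, which the paper does treat and which your item~(a) only partly anticipates, is along a separating $\alpha_i$ whose two intersection angles with $\gamma_0$ happen to coincide: there a half-twist swapping the two crossing points is a priori possible, and when $\alpha_i$ bounds a one-holed torus the cyclic order along $\gamma_0$ cannot distinguish the two gluings. The paper closes this with a short symmetry corollary: such a one-holed torus carries an isometry exchanging the two points, so both gluings produce isometric surfaces.
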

The idea now is to find a way of extracting information about $\gamma_0$ and $\mathcal{P}_0$ from the length-angle spectrum $\mathcal{L}\Theta(S)$. For that we consider a sequence of simple closed geodesics $T_{\bar{v}_n}(\gamma_0)$ indexed by $\bar{v}_n = (v_{1, n}, \ldots, v_{3g-3, n}) \in {\mathbb{Z}_{+}^{3g-3}}$. Here $T_{\bar{v}_n}(\gamma_0)$ is the geodesic freely homotopic to the curve
\begin{equation*}
D^{v_1}_{\alpha_1}  \circ D^{v_2}_{\alpha_2} \circ \cdots \circ D^{v_{3g-3}}_{\alpha_{3g-3}}(\gamma_0),
\end{equation*}
that is obtained from $\gamma_0$ by applying $v_{i, n}$ Dehn twists to $\gamma_0$ along $\alpha_i$ for $i =1, 2, ..., 3g-3$ (as in Theorem \ref{pants}). In \S3 we show that the corresponding angle sets $\Theta_S(\gamma_0, T_{\bar{v}_n}(\gamma_0))$ in a specific manner encodes {\it a lot of} information that we need about $(\alpha_i)$. To give an idea of the type of information these angle sets encode we begin by the following.
\begin{defn}
For any finite set $A$ we denote the cardinality of $A$ by $\#|A|$. Two diverging sequences of integers $u_n$ and $v_n$ are called similar, denoted $u_n \approx v_n$, if there is a $k \in \mathbb{N}$ such that $|u_n - v_n| \le k$.
\end{defn}
Let $\alpha, \beta$ be two simple closed geodesics on $S$ with $\iota(\alpha, \beta) =1.$ Let $\beta_n = \mathcal{D}^n_\alpha(\beta)$ and let $\mathcal{C}_\alpha  \subset S$ denote the collar neighborhood around $\alpha$. Then we have the following.
\begin{thm}
Let $\Theta_S(\alpha, \beta) = (\phi)$ and $\Theta_S(\alpha|_{\mathcal{C}_{\alpha}}, \beta_n|_{\mathcal{C}_\alpha}) = ( \theta_1^n, \theta_2^n, ..., \theta_m^n )$. Then: $(1)$ there is a partial monotonicity among $\theta_j^n$:\\*
$(a)$ if $n > 0$ then
\begin{equation*}
\theta_1^n > \theta_2^n > ...> \phi < ...< \theta_{m-1}^n < \theta_m^n
\end{equation*}
$(b)$ if $n<0$ then
\begin{equation*}
\theta_1^n < \theta_2^n < ...< \phi > ...> \theta_{m-1}^n > \theta_m^n
\end{equation*}
where the angles before and after $\phi$ correspond to the intersections between $\alpha|_{\mathcal{C}_{\alpha}}$ and $\beta_n|_{\mathcal{C}_\alpha}$ in the two different halves of $\mathcal{C}_\alpha \setminus \alpha$, \\*
$(2)$ for any $\epsilon >0$: $\#|\{ i: \theta_i^n \in (\phi-\epsilon, \phi + \epsilon) \}| \approx n.$
\end{thm}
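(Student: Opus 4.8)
The plan is to work entirely inside the collar $\mathcal{C}_\alpha$, which is an embedded hyperbolic cylinder whose core geodesic is $\alpha$, and to exploit the explicit model of a Dehn twist as a shear. Choose Fermi coordinates $(s,t)$ on $\mathcal{C}_\alpha$, where $s\in\R/\ell_S(\alpha)\Z$ parametrizes the direction along $\alpha$ and $t\in(-w,w)$ is signed distance to $\alpha$, so that the metric is $ds^2\cosh^2 t + dt^2$. In these coordinates $\beta$ enters one boundary component of $\mathcal{C}_\alpha$, crosses $\alpha$ once transversally at the unique intersection point (angle $\phi$), and leaves through the other boundary component; $\beta_n=\mathcal{D}_\alpha^n(\beta)$ is the geodesic freely homotopic to the curve obtained by inserting $n$ full turns around $\alpha$. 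The key geometric fact is that $\beta_n\cap\mathcal{C}_\alpha$ consists of $m=m(n)$ geodesic arcs; each such arc, being a geodesic in the cylinder, is a curve of the form described by the Fermi-coordinate geodesic equation, and crosses $\alpha$ exactly once (since a geodesic arc in a hyperbolic cylinder meets the core at most once). So the angles $\theta_i^n$ are precisely the crossing angles of these $m$ sub-arcs with $\alpha$, listed in order along $\alpha$.

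For step $(1)$, the monotonicity, I would first record the elementary computation for a single geodesic arc crossing the core of the cylinder: if a geodesic crosses $\alpha$ at angle $\psi\in(0,\pi)$, then the ``horizontal displacement'' $\Delta s$ it accumulates between the two boundary circles is a strictly monotone function of $\psi$ on each of $(0,\pi/2)$ and $(\pi/2,\pi)$ — more precisely, from the Clairaut-type first integral $\cosh^2 t\cdot\frac{ds}{d\tau}=\cos\psi$ (with $\tau$ arclength and the constant read off at $t=0$), one gets that the arc becomes ``steeper'' (closer to perpendicular, $\psi\to\pi/2$) as the winding contribution $\Delta s$ decreases, and the sign of $\psi-\pi/2$ is the sign of $\Delta s$. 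Now order the $m$ arcs of $\beta_n\cap\mathcal{C}_\alpha$ along $\alpha$; going from one intersection to the next corresponds to following $\beta_n$ once around inside the collar, which (for $n>0$) adds a fixed positive amount to the cumulative shear, so the per-arc horizontal displacements $\Delta s_i$ form a sequence that decreases in absolute value until it reaches its minimum and then increases, with the sign flipping exactly at $\phi$ (the ``original'' intersection, inherited from $\beta$). Translating back through the monotone dependence of $\psi$ on $\Delta s$ gives exactly the chain of inequalities in $(a)$; reversing the sign of $n$ reverses every inequality, giving $(b)$. The splitting ``before $\phi$ / after $\phi$'' is literally the splitting of the $\theta_i^n$ according to which of the two components of $\mathcal{C}_\alpha\setminus\alpha$ the corresponding crossing sits adjacent to, because along $\beta_n$ consecutive sub-arcs alternate which half-collar they primarily traverse after the shear is inserted — I would make this precise by tracking the endpoints of the sub-arcs on the two boundary circles.

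For step $(2)$, the count $\#|\{i:\theta_i^n\in(\phi-\epsilon,\phi+\epsilon)\}|\approx n$, the point is that the total shear inserted by $n$ Dehn twists is $n\cdot\ell_S(\alpha)$ up to a bounded error (the correction coming from the fixed geometry of $\beta$ outside the collar and the precise choice of twisting region), so the $\Delta s_i$ are, up to reindexing and $O(1)$ boundary terms, an arithmetic-progression-like sequence with common difference $\ell_S(\alpha)$ and total range $\approx n\,\ell_S(\alpha)$; hence the number of indices $i$ with $|\Delta s_i|$ below any fixed threshold is $\approx n$ (it is the number of terms of a step-$\ell_S(\alpha)$ progression of length $\approx n$ that land in a fixed interval — which is $n$ up to an additive constant only if that is wrong; in fact it is bounded, so I must be more careful). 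Correcting: the right statement is that $\Delta s_i$ ranges roughly linearly from a value of size $\approx n$ down to near $0$ and back up to $\approx n$ as $i$ runs over the $m\approx 2n$ sub-arcs, so $|\Delta s_i|\le C(\epsilon)$ for an interval of indices $i$ of length $\approx n$ on each side of the minimum; combined with the monotone dictionary $\Delta s\leftrightarrow\psi$ from step $(1)$, which is bi-Lipschitz near $\psi=\pi/2$, the condition $\theta_i^n\in(\phi-\epsilon,\phi+\epsilon)$ is equivalent to $|\Delta s_i|\le C'(\epsilon)$, whence the count is $\approx n$. The main obstacle I anticipate is making the bounded-error bookkeeping in step $(2)$ genuinely uniform — i.e. proving that ``$n$ Dehn twists contributes $n\ell_S(\alpha)$ of shear, up to an error independent of $n$'' — since $\beta_n$ is the \emph{geodesic} representative, not the piecewise curve, and one must control how much the geodesic straightening perturbs the arc structure inside $\mathcal{C}_\alpha$; the clean way is to fix a reference annulus slightly smaller than $\mathcal{C}_\alpha$, show the geodesic $\beta_n$ enters and leaves it in a controlled way using the collar lemma bounds, and compare consecutive sub-arcs via the twist homeomorphism, absorbing all finitely-many edge effects into the constant $k$ in the definition of $\approx$.
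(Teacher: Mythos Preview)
Your setup misreads the statement. Since Dehn twisting along $\alpha$ fixes the isotopy class of $\alpha$, one has $\iota(\alpha,\beta_n)=\iota(\alpha,\beta)=1$ for every $n$, so $\beta_n\cap\mathcal{C}_\alpha$ is a \emph{single} end-to-end arc, not $m$ arcs each crossing $\alpha$ once. (The notation $\Theta_S(\alpha|_{\mathcal{C}_\alpha},\beta_n|_{\mathcal{C}_\alpha})$ in the introduction is evidently a slip for $\Theta_S(\beta|_{\mathcal{C}_\alpha},\beta_n|_{\mathcal{C}_\alpha})$; compare the precise version proved in \S3, where the first argument is an end-to-end arc $\gamma$ with $\iota(\gamma,\alpha)=1$.) The $m\approx n$ angles $\theta_i^n$ are the angles at which the single nearly-radial arc $\gamma:=\beta|_{\mathcal{C}_\alpha}$ meets the single highly-twisted arc $\xi:=\beta_n|_{\mathcal{C}_\alpha}$, not crossing angles with $\alpha$. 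Your Clairaut computation produces the angle at which a geodesic crosses the core as a function of its horizontal displacement; but there is only one such crossing here, so there is no family $(\Delta s_i)$ of the kind you describe, and both the monotonicity argument in (1) and the arithmetic-progression count in (2) are built on the wrong picture.

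The paper's argument is different in character. For part (1) it lifts $\alpha$, $\gamma$, $\xi$ to $\mathbb{H}^2$: for the $i$-th intersection point $x_i$ (in one half of $\mathcal{C}_\alpha\setminus\alpha$), the subarcs of $\gamma$ and $\xi$ from $x_i$ back to $\alpha$, together with a subarc of the lift of $\alpha$, bound a geodesic triangle $T_i$; a homotopy tracking the twist orientation gives $T_i\subset T_{i+1}$, and the angle monotonicity then follows directly from the Gauss--Bonnet area formula \eqref{areapolygon}. For part (2) there is no explicit coordinate computation: one lets $n\to\infty$ and passes to the limiting lamination $L$, whose only minimal component is $\alpha$, so $\phi$ is the unique accumulation point of $\Theta_S(\gamma,L)$; a compactness argument over the possible endpoints of such spiraling laminations upgrades this to a uniform bound $\mathcal{N}_\alpha(\epsilon)$ on the number of angles outside $(\phi-\epsilon,\phi+\epsilon)$, which is exactly the assertion $\#|\{i:\theta_i^n\in(\phi-\epsilon,\phi+\epsilon)\}|\approx n$.
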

Given the above, the complete arguments of the proof go as follows. Let $S'$ be another closed hyperbolic surface of genus $g$ such that $\mathcal{L}\Theta(S) = \mathcal{L}\Theta(S').$ In particular, for each $n$ there are simple closed geodesics $\gamma'_n, \delta'_n$ on $S'$ such that
\begin{equation*}
	(\ell_S(\gamma_0), \ell_S(T_{\bar{v}_n}(\gamma_0)), \Theta_S(\gamma_0, T_{\bar{v}_n}(\gamma_0))) = (\ell_{S'}(\gamma'_n), \ell_{S'}(\delta'_n), \Theta_{S'}(\gamma'_n, \delta'_n)).
\end{equation*}
A priori the sequence $\gamma'_n$ depends on $n$ but the number of simple closed geodesics on $S'$ of length $\ell_S(\gamma_0)$ being finite, up to extracting a subsequence, we have a fixed closed geodesic $\gamma'_0 \in \mathcal{G}(S')$ such that
\begin{equation*}
(\ell_S(\gamma_0), \ell_S(T_{\bar{v}_n}(\gamma_0)), \Theta_S(\gamma_0, T_{\bar{v}_n}(\gamma_0))) = (\ell_{S'}(\gamma'_0), \ell_{S'}(\delta'_n), \Theta_{S'}(\gamma'_0, \delta'_n)).
\end{equation*}

The last part of the proof studies the sequence $\delta'_n$. Observe that, up to extracting a subsequence, these geodesics converge to a geodesic lamination $L$. With the assumption
\begin{equation*}
	\lim_{n \to \infty} \frac{v_{i+1, n}}{v_{i, n}} = 0, ~~ \text{for each} ~~ i = 1, 2, ..., 3g-2
\end{equation*}
we show, using results from \S3, that each leaf of $L$ spirals around a simple closed geodesic $\alpha'_i$ and the collection $(\alpha'_i)$ contains at least $3g-3$ simple closed geodesics. Since $\alpha'_i$s are mutually non-intersecting it follows that they form a pants decomposition $\mathcal{P}'_0$ of $S'$. Further analysis of the sequence $\delta'_n$ via the convergence $\delta'_n \to L$ provides us the following theorem that concludes the proof of the \textbf{Main Theorem} using Theorem \ref{markedrigidity}.
\begin{thm}\label{geodesic-pants}
Let $\gamma'_0$ and $\mathcal{P}'_0 = \{ \alpha'_i: i=1, \cdots, 3g-3 \}$ respectively be the simple closed geodesic on $S'$ and the pants decomposition of $S'$ as above. Then for each $i = 1, 2, ..., 3g-3$: $(i)$ $\ell_{S'}(\alpha'_i) = \ell_S(\alpha_i)$, $(ii)$ $\Theta_{S'}(\gamma'_0, \alpha'_i) = \Theta_S(\gamma_0, \alpha_i)$ and $(iii)$ $\Theta_{S'}(\gamma'_0, \cup_{i=1}^{3g-3} \alpha'_i) = \Theta_S(\gamma_0, \cup_{i=1}^{3g-3} \alpha_i)$, where $\gamma_0$ and $\mathcal{P}_0 = \{ \alpha_i: i= 1, 2, ..., 3g-3\}$ are as in Theorem \ref{pants}.
\end{thm}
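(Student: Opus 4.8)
The plan is to recover, by a recipe that depends only on the list of triples
\[
\bigl(\ell_S(\gamma_0),\,\ell_S(T_{\bar v_n}(\gamma_0)),\,\Theta_S(\gamma_0,T_{\bar v_n}(\gamma_0))\bigr),\qquad n\in\mathbb N,
\]
both the collection $\{(\ell_S(\alpha_i),\Theta_S(\gamma_0,\alpha_i))\}_{i=1}^{3g-3}$ and the cyclic word $\Theta_S(\gamma_0,\cup_i\alpha_i)$, and then to feed in the identity
\[
\bigl(\ell_S(\gamma_0),\,\ell_S(T_{\bar v_n}(\gamma_0)),\,\Theta_S(\gamma_0,T_{\bar v_n}(\gamma_0))\bigr)=\bigl(\ell_{S'}(\gamma'_0),\,\ell_{S'}(\delta'_n),\,\Theta_{S'}(\gamma'_0,\delta'_n)\bigr)
\]
provided by $\mathcal L\Theta(S)=\mathcal L\Theta(S')$. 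The $S$-side recipe runs as follows. Fix pairwise disjoint collars $\mathcal C_{\alpha_i}$; then $\gamma_0$ meets $\mathcal C_{\alpha_i}$ in $\iota(\gamma_0,\alpha_i)\in\{1,2\}$ arcs, and for $n$ large the geodesic $T_{\bar v_n}(\gamma_0)$ is obtained from $\gamma_0$ by replacing each such arc with one that spirals $v_{i,n}$ times around $\alpha_i$, leaving the rest essentially unchanged. By the monotonicity-and-counting analysis of \S3 (whose model case is the theorem quoted above), for $n$ large the cyclic sequence $\Theta_S(\gamma_0,T_{\bar v_n}(\gamma_0))$ therefore splits into consecutive \emph{blocks} --- one attached to each of the $\sum_i\iota(\gamma_0,\alpha_i)$ points of $\gamma_0\cap(\cup_i\alpha_i)$, with a bounded number of ``transition'' angles between blocks --- such that $(a)$ every angle of the block attached to a crossing on $\alpha_i$ lies within a prescribed $\epsilon>0$ of $\Phi(\gamma_0,\alpha_i)$; $(b)$ that block has cardinality of order $v_{i,n}$; and $(c)$ inside it the angles follow the monotone pattern of \S3, with a single dip whose value tends to the intersection angle of $\gamma_0$ with $\alpha_i$ at that crossing. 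As the sets $\Phi(\gamma_0,\alpha_i)$ are pairwise disjoint (Theorem~\ref{pants}) and $v_{i+1,n}/v_{i,n}\to0$, this decomposition is \emph{intrinsic} for $n$ large: the blocks are the maximal runs of consecutive angles contained in a common arbitrarily small subset of $(0,\pi)$; they fall into $3g-3$ ``generations'' according to the growth class of their cardinalities; generation~$i$ has exactly $\iota(\gamma_0,\alpha_i)$ blocks; and the cyclic arrangement of all the blocks is recorded. Finally, each spiralling arc through $\mathcal C_{\alpha_i}$ contributes $v_{i,n}\ell_S(\alpha_i)+O(1)$ to the length while the rest of $T_{\bar v_n}(\gamma_0)$ has bounded length, so
\[
\ell_S\bigl(T_{\bar v_n}(\gamma_0)\bigr)=\sum_{i=1}^{3g-3}\iota(\gamma_0,\alpha_i)\,v_{i,n}\,\ell_S(\alpha_i)+O(1).
\]

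Transport this to $S'$. By the identity of triples, $\Theta_{S'}(\gamma'_0,\delta'_n)$ carries the same intrinsic block decomposition: same limiting angle sets, same generations and cardinalities, same number of blocks per generation, same internal patterns. On the other hand $\delta'_n\to L$ and every leaf of $L$ spirals onto one of $\alpha'_1,\dots,\alpha'_{3g-3}$, so for $n$ large all but boundedly many points of $\delta'_n\cap\gamma'_0$ lie in small neighbourhoods of the finitely many points of $\gamma'_0\cap(\cup_j\alpha'_j)$, the angle at such a point converges to the corresponding element of $\Phi(\gamma'_0,\alpha'_j)$, and the number of such points near $\alpha'_j$ equals $\iota(\gamma'_0,\alpha'_j)$ times the wrapping number of $\delta'_n$ about $\alpha'_j$, up to $O(1)$. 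Matching this with $(a)$--$(c)$ identifies the blocks of $\Theta_{S'}(\gamma'_0,\delta'_n)$ with the clusters of $\delta'_n\cap\gamma'_0$ about the points of $\gamma'_0\cap(\cup_j\alpha'_j)$; since the generations are linearly ordered by growth rate and $\#|\mathcal P'_0|=3g-3$, this yields a bijection $\sigma$ of $\{1,\dots,3g-3\}$ with: generation~$i$ corresponds to $\alpha'_{\sigma(i)}$; $\iota(\gamma'_0,\alpha'_{\sigma(i)})=\iota(\gamma_0,\alpha_i)$ (equal number of blocks, equivalently dips, in the generation); $\Phi(\gamma'_0,\alpha'_{\sigma(i)})=\Phi(\gamma_0,\alpha_i)$; and the wrapping number of $\delta'_n$ about $\alpha'_{\sigma(i)}$ is $\iota(\gamma_0,\alpha_i)v_{i,n}+O(1)$. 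Relabel the $\alpha'_i$ so that $\sigma=\operatorname{id}$. Then reading $(c)$ block by block recovers the individual intersection angles of $\gamma'_0$ with $\alpha'_i$ --- including the coincidence of the two angles when $\alpha'_i$ is separating --- and matches them with those of $\gamma_0$ with $\alpha_i$ in the same cyclic order, which is conclusion~$(ii)$; and since the cyclic order of the blocks around the whole sequence is, on each side, the order in which $\gamma_0$ (resp.\ $\gamma'_0$) meets the curves of $\mathcal P_0$ (resp.\ $\mathcal P'_0$), replacing each block by its limiting dip value recovers $\Theta_S(\gamma_0,\cup_i\alpha_i)$ on the one side and $\Theta_{S'}(\gamma'_0,\cup_i\alpha'_i)$ on the other; these are equal because the $k$-th block is literally the same subsequence of angles on both sides and so has the same limiting dip value --- conclusion~$(iii)$.

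It remains to prove conclusion~$(i)$. Repeating the length bookkeeping on $S'$: the part of $\delta'_n$ that wraps near $\alpha'_i$ has length $w'_{i,n}\,\ell_{S'}(\alpha'_i)+O(1)$, where $w'_{i,n}$ is the wrapping number of $\delta'_n$ about $\alpha'_i$, and the rest of $\delta'_n$ is bounded; since the number of points of $\delta'_n\cap\gamma'_0$ near $\alpha'_i$ equals $\iota(\gamma'_0,\alpha'_i)\,w'_{i,n}+O(1)$ and also equals the transported generation-$i$ cardinality (of order $v_{i,n}$), and $\iota(\gamma'_0,\alpha'_i)=\iota(\gamma_0,\alpha_i)$, we get $w'_{i,n}=\iota(\gamma_0,\alpha_i)v_{i,n}+O(1)$ and hence
\[
\ell_{S'}(\delta'_n)=\sum_{i=1}^{3g-3}\iota(\gamma_0,\alpha_i)\,v_{i,n}\,\ell_{S'}(\alpha'_i)+O(1).
\]
Subtracting the two displayed length formulas and using $\ell_S(T_{\bar v_n}(\gamma_0))=\ell_{S'}(\delta'_n)$ gives $\sum_{i}\iota(\gamma_0,\alpha_i)\,v_{i,n}\bigl(\ell_S(\alpha_i)-\ell_{S'}(\alpha'_i)\bigr)=O(1)$; dividing by $v_{1,n}$ and letting $n\to\infty$ --- so the terms with $i\ge2$ drop out, as $v_{i,n}/v_{1,n}\to0$ --- forces $\ell_S(\alpha_1)=\ell_{S'}(\alpha'_1)$, whereupon removing this term and iterating with $v_{2,n},\dots,v_{3g-3,n}$ in turn gives $\ell_S(\alpha_i)=\ell_{S'}(\alpha'_i)$ for all $i$. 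This is conclusion~$(i)$, and with Theorem~\ref{markedrigidity} it proves that $S'$ is isometric to $S$, hence the \textbf{Main Theorem}.

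The real work is in what is abbreviated above as ``by the analysis of \S3'', together with the transfer: one must show that for $n$ large the block decomposition of the angle sequence is genuinely intrinsic --- maximal near-constant runs, boundedly many transition angles that stay away from the sets $\Phi(\gamma_0,\alpha_i)$, generations cleanly separated by the scale separation $v_{i+1,n}/v_{i,n}\to0$ --- so that it transfers verbatim to $S'$, and, conversely, that the convergence $\delta'_n\to L$ with every leaf of $L$ spiralling forces the intersections of $\delta'_n$ with $\gamma'_0$ to concentrate near $\gamma'_0\cap(\cup_j\alpha'_j)$, at exactly the angles $\Phi(\gamma'_0,\alpha'_j)$ and with exactly the predicted cardinalities up to $O(1)$, only $O(1)$ intersections being stray. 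Both inputs rest on the disjointness of the $\Phi(\gamma_0,\alpha_i)$ from Theorem~\ref{pants} and on $v_{i+1,n}/v_{i,n}\to0$, and this quantitative matching is the heart of the matter.
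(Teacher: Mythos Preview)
Your argument is correct and follows essentially the same route as the paper's \S5: read off the block structure of $\Theta_S(\gamma_0,T_{\bar v_n}(\gamma_0))$ from the \S3 analysis, transfer it verbatim to $\Theta_{S'}(\gamma'_0,\delta'_n)$, match the blocks with the spiraling of $\delta'_n\to L$ about the $\alpha'_i$ to obtain (ii) and (iii), and then recover the lengths by the telescoping induction on $i$ using $v_{i+1,n}/v_{i,n}\to0$ together with $\ell_S(T_{\bar v_n}(\gamma_0))=\ell_{S'}(\delta'_n)$.

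The one place you assert something the paper actually has to work for is the clause ``the rest of $\delta'_n$ is bounded'' in your length formula on $S'$. A priori $\delta'_n$ could carry unbounded length in $S'\setminus\bigcup_i\mathcal C_{\alpha'_i}$ (an arc in a pair of pants can be arbitrarily long by winding near a cuff), and Chabauty convergence $\delta'_n\to L$ does not by itself bound lengths. The paper closes this via an explicit untwisting step (Proposition~\ref{fixed}): write $\delta'_n=T_{s^1_n,\dots,s^{3g-3}_n}(\beta_n)$ with $\iota(\gamma'_0,\beta_n)$ minimal, hence bounded by the block count; then if $\ell(\beta_n)$ were unbounded some arc of $\beta_n$ inside a pair of pants would have to wind around a boundary $\alpha'_i$ and therefore meet $\gamma'_0$ unboundedly often, a contradiction; finally pass to a subsequence with $\beta_n\equiv\delta'_0$ fixed so that the two-sided length estimate of Proposition~\ref{length} applies directly. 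With this ingredient supplied your ``wrapping number'' bookkeeping and the paper's Dehn-twist bookkeeping are the same computation.
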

\subsection*{Structure of the article}
In \S1 we recall some basic concepts and tools that we are going to use in the later sections. There we recall $(i)$ formal definition of Dehn twist and $(ii)$ the structure theorem for geodesic laminations. The next section is devoted to two rigidity results Lemma \ref{punct-torus} and Theorem \ref{pants}. We give proofs of these two results there. The next section \S3 is the most important section of this article from the technical point of view. We begin this section by recalling asymptotic growth of the intersection numbers $\iota(\gamma, \mathcal{D}_\alpha^n(\beta))$ for $\alpha, \beta, \gamma \in \mathcal{G}(S)$. We then use this asymptotic to study asymptotic growth of lengths of geodesics of the form  $\ell(\mathcal{D}_\alpha^n(\beta))$. Later we develop qualitative and asymptotic properties of angle sets $\Theta_S(\gamma, \mathcal{D}_\alpha^n(\beta))$. The next section, \S4, is devoted to the construction of the pants decomposition in Theorem \ref{pants}. We prove our \textbf{Main Theorem} in \S5. In the end we have a small appendix where we explain two small results.
\section{Preliminaries}
In this section we review some standard facts from hyperbolic geometry that will be used in our study. The area formula of a hyperbolic geodesic polygon is the simplest among these. Let $G$ be a hyperbolic geodesic $n$-gon with interior angles $\phi_1, ..., \phi_n$. Then the area $|G|$ of $G$ is given by
\begin{equation}\label{areapolygon}
  |G| = (n-2)\pi - \sum_{i=1}^{n} \phi_i.
\end{equation}
\subsection{Collars}
Let $\alpha$ be a simple closed geodesic on $S$. The Collar Theorem says that there is a {\it collar} neighborhood $\mathcal{C}_\alpha \subset S$ of $\alpha$ which is isometric to the cylinder $[-w(\alpha), w(\alpha)] \times \mathbb{S}^1$ with the metric $d{r^2} + \ell_\alpha^2 \cosh^2{r}d\theta^2$ and for any two non-intersecting simple closed geodesics $\alpha, \beta$ the collars $\mathcal{C}_\alpha, \mathcal{C}_\beta$ are mutually disjoint. The coordinates $(r, \theta)$ on $\mathcal{C}_\alpha$ via this isometry is called the {\it Fermi coordinates}. For an $x \in \mathcal{C}_\alpha$ let $(r(x), \theta(x))$ be its Fermi coordinates. Then $r(x)$ denotes the signed distance of $x$ from $\alpha$ and $\theta(x)$ denotes the projection of $x$ on $\alpha$ when $\alpha$ is identified with $\mathbb{S}^1$ \cite[p-94]{Bu2}. 
\subsection{Dehn twist}
Dehn twist homeomorphisms are the most important tools used in this paper. We use them, for example, to construct our sequence of geodesics $T_{\bar{v}_n}(\gamma_0) \in \mathcal{G}(S)$. For a more complete and detailed discussion of these we refer the readers to \cite[Chapter 3]{F-M}.

Let $\alpha$ be a simple closed geodesic on $S$ and $\mathcal{C}_\alpha \subseteq S$ be the collar neighborhood of $\alpha$. Identify $\mathcal{C}_\alpha$ with $[-w(\alpha), w(\alpha)] \times \mathbb{S}^1$ via the isometry explained above. Now consider the homeomorphism $D_\alpha$ of $[-T, T] \times \mathbb{S}^1 \subseteq \mathcal{C}_\alpha$ given by
 \begin{equation*}
   (r, \theta) \to (r, \theta + \pi - \frac{\pi}{T}\cdot r).
 \end{equation*}
Since $D_\alpha$ fixes the two boundary circles $\{-T\} \times \mathbb{S}^1$ and $\{T\} \times \mathbb{S}^1$ pointwise it can be extended to the rest of the surface as identity. This homeomorphism (up to isotopy) is called the Dehn twist around $\alpha$. For a simple closed geodesic $\beta$ by $\mathcal{D}_\alpha(\beta)$, called the Dehn twist of $\beta$ along $\alpha$, we mean the simple closed geodesic freely homotopic to $D_\alpha(\beta)$. It is a standard fact that $\mathcal{D}_\alpha (\beta) \neq \beta$ iff $\iota(\alpha, \beta) \neq 0$ \cite[Proposition 3.2]{F-M}.
\subsection{End-to-end geodesic arcs}
Consider the Fermi coordinates $(r, \theta)$ on $\mathcal{C}_\alpha$ that identifies $\mathcal{C}_\alpha$ with $[-w(\alpha), w(\alpha)] \times \mathbb{S}^1$. Now consider the curves in $\mathcal{C}_\alpha$ that are graphs of smooth maps $[-w(\alpha), w(\alpha)] \to \mathbb{S}^1$. We call them {\it end-to-end arcs}. When such a curve is a geodesic we call it an {\it end-to-end geodesic arc}. The end-to-end geodesic arc with constant $\theta$ coordinate equal to $\phi$ is called the {\it $\phi$-radial arc} and is denoted by $\eta_\phi$. An end-to-end geodesic arc that does not intersect at least one radial arc will be called an {\it almost radial arc}. For an end-to-end geodesic arc $\xi$ by $\mathcal{D}_\alpha(\xi)$ we denote the end-to-end geodesic arc that is homotopic to $D_\alpha(\xi)$ under the end point fixing homotopy.	
\begin{rem}\label{almstgeod1}
Let $\partial_1^\alpha = \{-w(\alpha)\} \times \mathbb{S}^1$ and $\partial_2^\alpha = \{w(\alpha)\} \times \mathbb{S}^1$ be the two components of $\partial{\mathcal{C}_\alpha}$. Let $s_1 =(-w(\alpha), \theta_1) \in \partial_1^\alpha$ and $s_2 =(w(\alpha), \theta_2) \in \partial_2^\alpha$. It is not difficult to observe that: (i) if the $\theta_1 = \theta_2 = \phi$ then there is exactly one almost radial arc,  $\eta_\phi$, with end points $s_i \in \partial_i^\alpha$ and (ii) if the $\theta_1 \neq \theta_2$ then there are exactly two simple paths in $\mathbb{S}^1$ that join $\theta_1$ and $\theta_2$ each of which produce exactly one almost radial arc with end points $s_i \in \partial_i^\alpha$. One of these two arcs is a Dehn twist of the other along $\alpha$.
\end{rem}
\subsubsection{Orientation of an end-to-end arc}
Observe that the end-to-end arc $D_\alpha(\eta_\phi)$ is the graph of the map $\Psi: [-w(\alpha), w(\alpha)] \to \mathbb{S}^1$ that sends $t$ to $(\phi + \pi - \frac{\pi}{T}\cdot t)$. We consider the orientation of  $\mathbb{S}^1$ such that this map is orientation preserving. Observe that this orientation does not depend on $\phi$ but depends on the Fermi coordinates. For an end-to-end arc $\xi$ in $\mathcal{C}_\alpha$ we consider the smooth function $\Psi_\xi: [-w(\alpha), w(\alpha)] \to \mathbb{S}^1$ whose graph is $\xi$. We say that the orientation of $\xi$ is positive (or negative) if $\Psi_\xi$ is orientation preserving (or orientation reversing).  
\subsection{Pants decomposition}
For us a pants decomposition of a hyperbolic surface $S$ is a collection of mutually disjoint simple closed geodesics that divide the surface into three holed spheres. In \S2 and thereafter we shall consider pants decompositions of closed hyperbolic surfaces that intersect a given simple closed non-separating geodesic minimally. Figure \ref{fig:Pants} is an example of such a hyperbolic surface $M$ of genus $3$. The simple closed geodesic $\gamma$ is given by the yellow curve and the pants decomposition $P$ by the red curves.
\begin{figure}[H]
	\includegraphics[scale=.25]{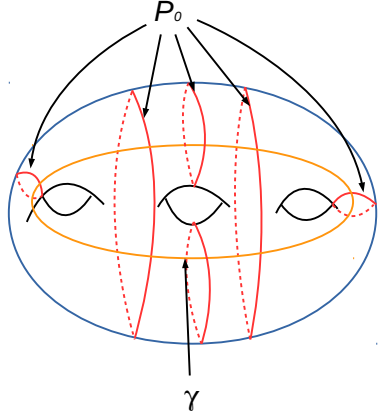}
	\caption{Pants decomposition}
	\label{fig:Pants}
\end{figure}
\subsection{Geodesic laminations}\label{gl}
A geodesic lamination on a hyperbolic surface $S$ is a closed set $\mathcal{L}$ which is a disjoint union of complete geodesics, called {\it leaves} of the lamination. The simplest examples of such things are simple closed geodesics. A little more complicated and most used one in this article are the {\it limits} of simple closed geodesics under repeated Dehn twists.

A geodesic lamination can be much more complicated than these. For a more complete and detailed discussion of this topic we refer the readers to \cite{C-M-E}. One of the most important use of these laminations come with an associated measure, and the pair is called a {\it measured geodesic lamination}. We shall not use the later in this article.
\subsubsection{A topology on $\mathcal{GL}(S)$}
Let $\mathcal{GL}(S)$ denote the space of all geodesic laminations on $S$. The Chabauty topology on $\mathcal{GL}(S)$ is the restriction of the Chabauty topology on the space $\mathcal{C}(S)$ of all closed subsets of $S$. For detailed discussion of this topology we refer to \cite[Chapter I.3.1, Chapter I.4]{C-M-E}.

\begin{rem}\label{compact}
  It is well-known that $\mathcal{GL}(S)$ is compact, separable and metrizable with respect to the Chabauty topology.
\end{rem}
\begin{defn}
 1. A subset of a geodesic lamination which itself is a geodesic lamination is called a sub-lamination. A sub-lamination is called proper if it is not the whole lamination.\\*
 2. A lamination is called minimal if it does not have any proper sub-lamination. A minimal sub-lamination of a lamination is a sub-lamination which is minimal as a lamination. \\*
 3. A leaf of a lamination is called isolated if each point on it has a neighborhood (in $S$) that do not intersect any other leaf of the lamination.\\*
 4. We say that a leaf $l$ spirals along one of its ends around a lamination $K$ if every lift of $l$ to $\mathbb{H}^2$ shares an end point (at $\partial{\mathbb{H}^2}$) with an end point of one of the leaves of a lift of $K$ to $\mathbb{H}^2$.
\end{defn}
In \S5 we would have to deal with geodesic laminations without having prior knowledge of their structure. In that situation we shall need the following structure theorem of geodesic laminations \cite[I.4.2.8. Theorem: Structure of lamination]{C-M-E} for further understanding our geodesic laminations.
\begin{thm}\label{structure}
  Let $\mathcal{L}$ be a geodesic lamination on a hyperbolic surface $S$ of finite area. Then $\mathcal{L}$ consists of disjoint union of finitely many minimal sub-laminations $K_1, ..., K_m$ and finitely many isolated leaves $l_1, ..., l_p$ such that each $l_i$ along one of its ends spirals around one of the $K_j$.
\end{thm}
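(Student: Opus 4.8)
\emph{Strategy.} I would use only the compactness of $\mathcal{GL}(S)$ in the Chabauty topology (Remark~\ref{compact}) together with the area formula \eqref{areapolygon}. The plan is to first split off the minimal sublaminations of $\mathcal{L}$, show there are only finitely many of them, and then show that what is left over is a finite collection of isolated leaves, each spiralling onto one of the minimal pieces. First I would observe that every nonempty sublamination of $\mathcal{L}$ contains a minimal one: ordering the nonempty closed sublaminations by reverse inclusion, a decreasing intersection of nonempty sublaminations is nonempty (it is an intersection of nonempty closed subsets of the compact surface $S$) and is again a geodesic lamination, so Zorn's lemma applies. Two distinct minimal sublaminations are disjoint, since a common sublamination would by minimality coincide with each. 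I would also use the standard dichotomy: a minimal lamination is connected, and is either a single simple closed geodesic or has every leaf dense; in the second case it \emph{fills} a unique essential subsurface $S(K) \subseteq S$ with geodesic boundary, meaning that every essential simple closed curve of $S(K)$ crosses $K$.

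\emph{Finiteness of the minimal pieces.} The minimal sublaminations that are simple closed geodesics are pairwise disjoint, hence pairwise non-isotopic, hence there are at most $3g-3$ of them. Each minimal sublamination of the other type fills an essential subsurface; as these laminations are pairwise disjoint and each fills its own subsurface, their filling subsurfaces have pairwise disjoint interiors, and a closed surface of genus $g$ contains only finitely many essential subsurfaces with pairwise disjoint interiors (their boundaries are disjoint simple closed geodesics, so at most $3g-3$ curves in all). Thus $\mathcal{L}$ has only finitely many minimal sublaminations $K_1, \dots, K_m$, and $K := K_1 \cup \cdots \cup K_m$ is closed.

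\emph{The leftover leaves.} Let $l \subseteq \mathcal{L} \setminus K$ be a leaf; it is non-compact, since a closed leaf would be a minimal sublamination lying in $K$. Parametrising $l$ by arclength, I would consider the accumulation sets $\omega_+(l)$ and $\omega_-(l)$ in $S$ of its two ends: by compactness of $S$ these are nonempty, and being closed and consisting of entire leaves (a point of $\omega_\pm(l)$ drags its whole leaf into $\omega_\pm(l)$) they are sublaminations of $\mathcal{L}$, hence each contains some $K_j$. The next step is to upgrade ``$l$ accumulates onto $K_j$'' to ``$l$ spirals onto $K_j$'': passing to $\mathbb{H}^2$, the relevant end of a lift $\tilde l$ must converge to a point of the limit set of the matching lift of $K_j$, and if that point were not an endpoint of a leaf of that lift one could extract --- as a Chabauty limit of translates of $l$, using Remark~\ref{compact} --- a complete geodesic disjoint from $K_j$ but trapped inside $S(K_j)$, contradicting that $K_j$ fills $S(K_j)$ (when $K_j$ is a closed geodesic the spiralling is immediate). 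Finally I would show that $l$ is isolated in $\mathcal{L}$ and that there are only finitely many such leaves, by noting that each is a boundary leaf of a complementary region of $\mathcal{L}$ in $S$: by \eqref{areapolygon} and Gauss--Bonnet, $S$ has only finitely many complementary regions, each of finite area and hence bounded by finitely many leaves. Putting the three steps together gives $\mathcal{L} = K_1 \sqcup \cdots \sqcup K_m \sqcup l_1 \sqcup \cdots \sqcup l_p$, with each $l_i$ spiralling, at one of its ends, onto some $K_j$.

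\emph{Main obstacle.} The substance of the proof sits in the last step. Two mechanisms drive it: compactness of $\mathcal{GL}(S)$, which forces every end of a leaf to accumulate onto something and supplies the Chabauty-limit geodesics used to exclude non-spiralling behaviour; and the finite-area bound from \eqref{areapolygon}, which caps the number of complementary regions and of boundary leaves, hence the number of leftover leaves. The single most delicate point is the implication ``accumulation onto a general minimal lamination $\Rightarrow$ genuine spiralling'': it relies on the rigidity of filling minimal laminations --- the fact that no complete geodesic can run disjointly alongside, and stay trapped against, such a lamination.
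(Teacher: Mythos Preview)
The paper does not prove this statement: it is quoted from \cite[I.4.2.8]{C-M-E} and used as a black box in \S5, so there is no in-paper argument to compare your proposal against. Your outline is essentially the classical proof found in the cited source (and in Casson--Bleiler, \emph{Automorphisms of surfaces after Nielsen and Thurston}): extract minimal sublaminations via Zorn, bound their number by the topological complexity of $S$, and show that the remaining leaves are finitely many isolated leaves spiralling onto the minimal pieces.

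Two places in your sketch would need tightening in a full write-up. First, your last step asserts that every leaf of $\mathcal{L}\setminus K$ is a boundary leaf of a complementary region, and that boundary leaves are isolated. Neither implication is automatic: you must argue that a non-boundary leaf (one approached from both sides) is necessarily recurrent and hence lies in $K$, and then that a boundary leaf outside $K$ is in fact isolated on \emph{both} sides (a priori a boundary leaf is isolated only on the side facing its complementary region; two-sided isolation follows because any leaves approaching from the other side would, all but finitely many of them, already lie in the closed set $K$ and drag $l$ into $K$). Second, your ``accumulation $\Rightarrow$ spiralling'' step for a non-closed minimal $K_j$ is, as you say, the delicate point: the Chabauty-limit geodesic you extract must be shown to be disjoint from $K_j$ yet trapped in $S(K_j)$, and this uses that leaves of a lamination cannot cross, so limits of leaves are again non-crossing. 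These are standard refinements rather than genuine gaps, and your identification of the main obstacle is accurate.
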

For a geodesic $\gamma \in \mathcal{G}(S)$ and a geodesic lamination $L$ we can consider the angle set $\Theta_S(\gamma, L)$ and the set of angles $\Phi(\gamma, L)$ in $\Theta_S(\gamma, L)$ in exactly the same way as before. In this case however $\Theta_S(\gamma, L) \in [0, \pi]^\omega$ where $\omega$ is the cardinality of $\iota(\gamma, L)$. When the last cardinality is infinite we have accumulation points in $\Phi(\gamma, L)$.
\begin{defn}
We call a point $\phi \in \Phi(\gamma, L)$ a accumulation point of $\Theta_S(\gamma, L)$ if there is an ordered sequence $(\theta_n)$ in $\Theta_S(\gamma, L)$ that converges to $\phi$. An ordered set $(\phi_1, ..., \phi_k)$ of $\Theta_S(\gamma, L)$ is called the set of accumulation points of $\Theta_S(\gamma, L)$ if $\phi_1, ... \phi_k$ represent all accumulation points of $\Theta_S(\gamma, L)$ counting multiplicity.
\end{defn}
\subsubsection{Spiraling around a collection of geodesics}
Let $\alpha_1, \cdots, \alpha_k$ be a collection of mutually disjoint simple closed geodesics. Let $\gamma_0$ be a simple closed geodesic such that $\iota(\alpha_i, \gamma_0) \neq 0$ for each $i =1, \cdots, k$. For $(n_1, \cdots, n_ i) \in \mathbb{Z}^k$ consider the geodesic $T_{n_{1, i}, \cdots, n_{k, i}}(\gamma_0)$ that is freely homotopic to
\begin{equation*}
D^{n_1}_{\alpha_1}  \circ D^{n_2}_{\alpha_2} \circ \cdots \circ D^{n_k}_{\alpha_k}(\gamma_0).
\end{equation*}
Now let $(n_{1, i}, \cdots, n_{k, i}) \in \mathbb{Z}^k$ be a sequence such that the sign of $n_{j, i}$ be independent of $i$. Denote the sign of $n_{j, i}$ by $\sign(j)$. As $n_{j, i}$ tend to (positive or negative) infinity $T_{n_{1, i}, \cdots, n_{k, i}}(\gamma_0)$ converges to a geodesic lamination $$L_{\sign(1)\alpha_1, \cdots, \sign(k)\alpha_k}(\gamma_0)$$ that has exactly $k$ closed leaves $\alpha_1, \cdots, \alpha_k$. Rest of the leaves of this lamination are isolated and so along each of their ends they spiral around one of the $\alpha_i$. Let $\ell$ be one such half-leaf that spirals around $\alpha_i$. In \S 1.3.1 we defined the orientation of an end-to-end geodesic arc given fixed Fermi coordinates on $\mathcal{C}_{\alpha_i}$. In a very similar way we can define the orientation for $\ell$. Observe that as $\ell$ spirals around $\alpha_i$ the orientation of $\ell$ is positive if $\sign(i)$ is positive and negative if $\sign(i)$ is negative.
\section{Marked Rigidity}
In this section we prove the two rigidity results Lemma \ref{punct-torus} and Theorem \ref{markedrigidity}. These results are motivated by the marked length rigidity of closed hyperbolic surfaces due to Fricke-Klein \cite{F-K}. As warm up exercise we treat compact hyperbolic surfaces that are once holed tori with geodesic boundary. It is long know \cite{BuS} (see also \cite{H}) that for these surfaces length spectrum determines the metric. 
\begin{lem}\label{punct-torus}
	Let $\mathcal{T}$ and $\mathcal{T}'$ be two compact one holed torus with geodesic boundary. Let $(\alpha, \beta)$ and $(\alpha', \beta')$ be two pairs of simple closed geodesics on $\mathcal{T}$ and $\mathcal{T}'$ respectively with $\iota(\alpha, \beta) = 1 = \iota(\alpha', \beta')$. If
	\begin{equation*}
		(\ell_{\mathcal{T}}(\alpha), \ell_{\mathcal{T}}(\beta), \Theta_{\mathcal{T}}(\alpha, \beta)) = (\ell_{\mathcal{T}'}(\alpha'), \ell_{\mathcal{T}'}(\beta'), \Theta_{\mathcal{T}'}(\alpha', \beta'))
	\end{equation*}
	then $\mathcal{T}$ is isometric to $\mathcal{T}'$.
\end{lem}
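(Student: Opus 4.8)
The plan is to pass to holonomy representations and read off $\mathcal{T}$ from the conjugacy class of the holonomy. The topological input is that $\iota(\alpha,\beta)=1$ forces $\{[\alpha],[\beta]\}$ to be a free basis of $\pi_1(\mathcal{T})\cong F_2$ (and $\{[\alpha'],[\beta']\}$ a free basis of $\pi_1(\mathcal{T}')$), with $\partial\mathcal{T}$ freely homotopic to the commutator $[\alpha,\beta]$. Since a hyperbolic structure on a compact surface with geodesic boundary is determined up to isometry by its holonomy $\rho\colon\pi_1\to\mathrm{PSL}_2(\R)$ up to conjugacy --- concretely, $\mathcal{T}$ is the convex core of the complete hyperbolic surface $\mathbb{H}^2/\rho(\pi_1\mathcal{T})$ --- it suffices to show that the pairs $(A,B):=(\rho_\mathcal{T}(\alpha),\rho_\mathcal{T}(\beta))$ and $(A',B'):=(\rho_{\mathcal{T}'}(\alpha'),\rho_{\mathcal{T}'}(\beta'))$ generate $\mathrm{PGL}_2(\R)$-conjugate subgroups of $\mathrm{Isom}(\mathbb{H}^2)$.

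The crux is that $(\ell_\mathcal{T}(\alpha),\ell_\mathcal{T}(\beta),\Theta_\mathcal{T}(\alpha,\beta))$ recovers the conjugacy class of the pair $(A,B)$. I would base $\pi_1(\mathcal{T})$ at the unique intersection point $p\in\alpha\cap\beta$ and fix a lift $\tilde p\in\mathbb{H}^2$. Because $\alpha$ and $\beta$ are smooth closed geodesics through $p$, the lifts of these loops based at $\tilde p$ are geodesic segments lying on the axes of $A$ and of $B$; hence the axis of $A$ and the axis of $B$ both pass through $\tilde p$ and cross there at exactly the angle $\Theta_\mathcal{T}(\alpha,\beta)=:\phi\in(0,\pi)$, while $A$ and $B$ are hyperbolic of translation lengths $\ell_\mathcal{T}(\alpha)$ and $\ell_\mathcal{T}(\beta)$. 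Now a short normalization identifies the conjugacy class: conjugating in $\mathrm{PGL}_2(\R)$ we may take the axis of $A$ to be the imaginary axis, $A$ to translate it by $\ell_\mathcal{T}(\alpha)$, and $\tilde p$ to be $i$; then the axis of $B$ is, up to the reflection $z\mapsto-\bar z$ (which commutes with $A$), the geodesic through $i$ meeting the imaginary axis at angle $\phi$, and $B$ translates it by $\ell_\mathcal{T}(\beta)$. Reversing the translation sense of $A$ or of $B$ replaces a generator by its inverse and leaves $\langle A,B\rangle$ unchanged, so $\langle A,B\rangle$ is determined up to $\mathrm{PGL}_2(\R)$-conjugacy by $(\ell_\mathcal{T}(\alpha),\ell_\mathcal{T}(\beta),\phi)$ alone. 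Running the identical normalization on $(\mathcal{T}',\alpha',\beta')$ and using the hypothesis that the two triples coincide, we conclude that $\langle A,B\rangle$ and $\langle A',B'\rangle$ are conjugate, whence $\mathcal{T}\cong\mathcal{T}'$.

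This argument is structural rather than computational; the points demanding care are (i) verifying that the axes of $A$ and $B$ genuinely cross at the angle $\phi$ --- this is exactly where $\iota(\alpha,\beta)=1$ enters, both to make $[\alpha],[\beta]$ a free basis and to make $p$ a single transverse crossing --- and (ii) keeping track of the finitely many discrete choices (orientations of the two axes, and the mirror pair of admissible $B$-axes when $\phi\neq\pi/2$), so that no ambiguity beyond isometry survives, together with recalling why the holonomy, up to conjugacy, determines the geometry of a bordered hyperbolic surface. An equivalent but more hands-on variant would instead compute, by hyperbolic trigonometry (or the Fricke trace identity), the length $\ell_\mathcal{T}(\alpha\beta)$ --- equivalently $\ell_\mathcal{T}(\partial\mathcal{T})$, via $\tr[A,B]=-2\cosh(\ell_\mathcal{T}(\partial\mathcal{T})/2)$ --- as an explicit function of $\ell_\mathcal{T}(\alpha),\ell_\mathcal{T}(\beta)$ and $\phi$, and then invoke the classical fact that the lengths of three simple closed geodesics pairwise meeting once determine a one-holed torus with geodesic boundary up to isometry.
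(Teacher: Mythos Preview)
Your argument is correct and takes a genuinely different route from the paper's. The paper proceeds by cut-and-paste: it cuts $\mathcal{T}$ along $\alpha$ to obtain a $Y$-piece $\mathcal{Y}(\ell_\mathcal{T}(\alpha),\ell_\mathcal{T}(\alpha),\ell_0)$, uses the hyperbolic law of sines in a right triangle (formed by half of $\beta$, the common perpendicular between the two copies of $\alpha$, and a subarc of $\alpha$) to show that the length of that perpendicular---and hence $\ell_0$---is determined by $(\ell_\mathcal{T}(\alpha),\ell_\mathcal{T}(\beta),\theta)$, and then argues via a separate lemma (Lemma~\ref{Ypiece}) that the gluing twist is also forced by the angle data. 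Your approach instead normalizes the holonomy pair $(A,B)$ directly, using that the axes of $A$ and $B$ must cross at the lift of the unique intersection point; this bypasses both the trigonometric computation and the gluing analysis in one stroke. What your approach buys is conceptual economy and a clear explanation of why exactly three numbers suffice; what the paper's approach buys is that it stays entirely within the elementary cut-and-paste framework used in the rest of the article (Theorem~\ref{markedrigidity} is proved by the same $Y$-piece bookkeeping), and it makes the role of Lemma~\ref{Ypiece}---which is reused later---explicit. Your closing remark about recovering $\ell_\mathcal{T}(\partial\mathcal{T})$ and invoking the classical three-lengths rigidity for one-holed tori is in fact close in spirit to what the paper does, though the paper phrases it via the perpendicular length rather than $\ell_\mathcal{T}(\alpha\beta)$.
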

\begin{figure}[H]
	\includegraphics[scale=.25]{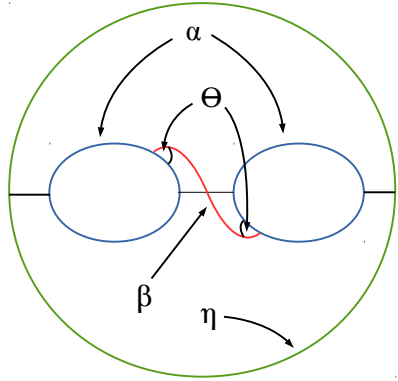}
	\caption{One holed torus}
	\label{fig:puncture}
\end{figure}
\begin{proof}
  We use cut and paste method to prove the lemma. We begin by cutting $\mathcal{T}$ along $\alpha$. This would result in a $Y$-piece $\mathcal{Y}(\ell_{\mathcal{T}}(\alpha), \ell_{\mathcal{T}}(\alpha), \ell_0)$ where the triple marks the lengths of the three boundary geodesics of the $Y$-piece. We shall first show that the third length $\ell_0$ is uniquely determined by our triple
  \begin{equation}\label{holedtorus}
    (\ell_{\mathcal{T}}(\alpha), \ell_{\mathcal{T}}(\beta), \Theta_{\mathcal{T}}(\alpha, \beta)).
  \end{equation}

  We begin by understanding what $\beta$ looks like in $\mathcal{Y}(\ell_{\mathcal{T}}(\alpha), \ell_{\mathcal{T}}(\alpha), \ell_0)$. In the above picture (Picture \ref{fig:puncture}) we consider our situation. The geodesic marked $\eta$ has length $\ell_0$. The three black arcs joining pairs of boundary geodesics of $\mathcal{Y}(\ell_{\mathcal{T}}(\alpha), \ell_{\mathcal{T}}(\alpha), \ell_0)$ are the mutual perpendiculars. The red arc that joins the two copies of $\alpha$ represents $\beta$.

  By a symmetry argument we obtain that the point of intersection of $\beta$ and the mutual perpendicular between the two copies of $\alpha$ is the mid-point of both of these geodesic arcs. Now consider one of the triangles formed by two of these arcs and one of the subarcs on (one of the copies of) $\alpha$. By the angle ratio formula from basic hyperbolic trigonometry:
  \begin{equation}\label{ypiece}
   \frac{\sinh{\ell_p}}{\sin{\theta}} = \frac{\sinh{\frac{\ell_{\mathcal{T}}(\beta)}{2}}}{\sin{\frac{\pi}{2}}} \Rightarrow \sinh{\ell_p} = \sin{\theta}.\sinh{\frac{\ell_{\mathcal{T}}(\beta)}{2}}
  \end{equation}
  where $2 \ell_p$ is the length of the mutual perpendicular between the two copies of $\alpha$ and $(\theta) = \Theta_{\mathcal{T}}(\alpha, \beta)$. Hence $\ell_p$ is determined by the data \eqref{holedtorus}.

 Now recall that any $Y$-piece is determined (up to isometry) by the lengths of its three boundary geodesics. In particular $2 \ell_p$ is a function of $\ell_{\mathcal{T}}(\alpha)$ and $\ell_0$. Fixing the value of $\ell_{\mathcal{T}}(\alpha)$ consider the map $\ell_0 \to 2 \ell_p$. A simple trigonometric computation implies that this map is injective. Hence by above $\ell_0$ is determined by \eqref{holedtorus}.
 
 Finally we get that $\mathcal{T} \setminus \alpha$ is isometric to $\mathcal{T}' \setminus \alpha'$. To conclude the theorem it suffices to observe that there is a unique way of gluing the two copies of $\alpha$ (or $\alpha'$) in the boundary of $\mathcal{Y}(\ell_{\mathcal{T}}(\alpha), \ell_{\mathcal{T}}(\alpha), \ell_0)$ (or $\mathcal{Y}(\ell_{\mathcal{T}'}(\alpha'), \ell_{\mathcal{T}'}(\alpha'), \ell_0)$) such that the red arc becomes $\beta$ (or $\beta'$) after the gluing. This follows from Lemma \ref{Ypiece}. 
\end{proof}
Now we present a proof of Theorem \ref{markedrigidity}. Let us begin by recalling it. 

\textbf{Theorem \ref{markedrigidity}}
{\it Let $S'$ be a closed hyperbolic surface of genus $g$. If there is a simple closed geodesics $\gamma'_0$ on $S'$ and a pants decomposition $\mathcal{P}'_0 = \{ \alpha'_1, ..., \alpha'_{3g-3} \}$ of $S'$ such that for each $i =1, 2, ..., 3g-3$ \\*
$(i)$ $\ell_S(\alpha_i) = \ell_{S'}(\alpha'_i)$,\\*
$(ii)$ $\Theta_S(\gamma_0, \alpha_i) = \Theta_{S'}(\gamma'_0, \alpha'_i)$ and \\*
$(iii)$ $\Theta_S(\gamma_0, \cup_{i=1}^{3g-3} \alpha_i) = \Theta_{S'}(\gamma'_0, \cup_{i=1}^{3g-3} \alpha'_i)$ \\*
where $\gamma_0$ and $\mathcal{P}_0 = \{ \alpha_i: i= 1, 2, ..., 3g-3\}$ are as in Theorem \ref{pants}	then $S'$ is isometric to $S$.}

\begin{proof}
As one moves along $\gamma_0$ the points of intersection between $\gamma_0$ and $\alpha_i$s appear one after another. The ordering of these intersections is well defined only up to a cyclic permutation. Let us assume that they appear along $\gamma_0$ in the cyclic order
\begin{equation*}
 \alpha_1 \to \alpha_2 \to ...\to \alpha_{2g-1} \to \alpha_{2g-2} \to \alpha_{2g} \to ...\to \alpha_{3g-3} \to \alpha_2 \to \alpha_1.
\end{equation*}
 In the first step we show that up to a cyclic permutation the corresponding appearance of the $\alpha'_i$s along $\gamma'_0$ is identical i.e.
 \begin{equation*}
 \alpha'_1 \to \alpha'_2 \to ...\to \alpha'_{2g-1} \to \alpha'_{2g-2} \to \alpha'_{2g} \to ...\to \alpha'_{3g-3} \to \alpha'_2 \to \alpha'_1.
\end{equation*}
To see this let $\alpha_k$ appears just after $\alpha_l$ along $\gamma_0$ and let $(\phi_1, \phi_2)$ be the ordered subset of $\Theta_S(\gamma_0, \cup_{i=1}^{3g-3} \alpha_i)$ corresponding to these two intersections. Observe that $\phi_1 \in \Theta_S(\gamma_0, \alpha_l) = \Theta_{S'}(\gamma'_0, \alpha'_l)$ and $\phi_2 \in \Theta_S(\gamma_0, \alpha_k) = \Theta_{S'}(\gamma'_0, \alpha'_k)$. Since $\Phi(\gamma'_0, \alpha'_i) \cap \Phi(\gamma'_0, \alpha'_j) = \emptyset$ for $i \neq j$ that is the only possibility as well! Hence the ordering follows.

Now let $\alpha_i, \alpha_j$ and $\alpha_k$ bound a $Y$-piece in $S$. Then there is an ordering among $\alpha_i, \alpha_j$ and $\alpha_k$ according to their appearance along $\gamma_0$. In particular, up to a change of indices, we may assume that $\alpha_i$ appears {\it exactly} before $\alpha_j$ and $\alpha_k$ appear {\it exactly} after $\alpha_j$ (observe that $\alpha_i, \alpha_j$ and $\alpha_k$ may not appear consecutively!). By the above ordering equality $\alpha'_i, \alpha'_j$ and $\alpha'_k$ appear identically along $\gamma'_0$. Since geodesics from a pants decomposition must intersect $\gamma'_0$ before any other geodesics do, we obtain that $\alpha'_i, \alpha'_j$ and $\alpha'_k$ determines a $Y$-piece. 

Recall that a hyperbolic surface can be described as a collection of marked $Y$-pieces and a set of relations for gluing pairs of identically marked boundary geodesics of these $Y$-pieces. In that setting the above basically say that $S$ and $S'$ can be constructed from identical collection of $Y$-pieces (obtained from $S \setminus \mathcal{P}_0$ or $S' \setminus \mathcal{P}'_0$) and the gluing relations possibly differ only by twists around different geodesics (in $\mathcal{P}_0$).

Next we consider a $Y$-piece $\mathcal{Y}(\alpha_i, \alpha_j, \alpha_k) \subset S$ with boundary geodesics $\alpha_i, \alpha_j$ and $\alpha_k$. Let $\mathcal{Y}'(\alpha'_i, \alpha'_j, \alpha'_k) \subset S'$ be the corresponding $Y$-piece with boundary geodesics $\alpha'_i, \alpha'_j$ and $\alpha'_k$. Since $\ell(\alpha_i) = \ell(\alpha'_i)$, $\mathcal{Y}(\alpha_i, \alpha_j, \alpha_k)$ is isometric to $\mathcal{Y}'(\alpha'_i, \alpha'_j, \alpha'_k)$ via an isometry that sends $\alpha_i \to \alpha'_i$. 
\begin{lem}\label{Ypiece}
  Let $Y$ be a pair of pants. Consider two boundary geodesics $\alpha_1, \alpha_2$ of $Y$. Let $A$ be the collection of simple geodesic arcs in $Y$ that joins them. Then every geodesic arc $\beta$ in $A$ is determined by $\Theta_Y(\beta, \alpha_1 \cup \alpha_2)$.
\end{lem}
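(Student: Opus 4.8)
\emph{Proof idea.}
The plan is to pass to the universal cover and reduce the statement to an injectivity assertion for geodesic segments joining two ultraparallel geodesics of $\mathbb{H}^2$. Two preliminary observations. First, in a pair of pants there is exactly one isotopy class of essential simple arcs joining the two \emph{distinct} boundary geodesics $\alpha_1$ and $\alpha_2$ --- a standard instance of the change-of-coordinates principle (see \cite[Ch.~1--2]{F-M}) --- so all arcs in $A$ become isotopic once the endpoints are allowed to slide along $\alpha_1\cup\alpha_2$. Second, each $\beta\in A$ is a geodesic arc whose interior lies in the interior of $Y$, hence meets $\alpha_1\cup\alpha_2$ exactly in its two endpoints, one on each $\alpha_i$; thus $\Theta_Y(\beta,\alpha_1\cup\alpha_2)$ is simply the ordered pair $(\theta_1,\theta_2)$, where $\theta_i$ is the intersection angle of $\beta$ with $\alpha_i$ at the corresponding endpoint, read off with the paper's orientation convention. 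So it suffices to prove that $\beta\mapsto(\theta_1,\theta_2)$ is injective on $A$.

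Fix orientations of $\alpha_1$ and $\alpha_2$, and suppose $\beta,\beta'\in A$ have the same angle pair. Since $\alpha_1,\alpha_2$ are disjoint simple closed geodesics, any two of their lifts are ultraparallel; and since $\beta\simeq\beta'$, after applying a deck transformation we may lift them with endpoints on one and the same pair $\widetilde\alpha_1,\widetilde\alpha_2$ of complete geodesics of $\mathbb{H}^2$, say $\widetilde\beta=[\widetilde p,\widetilde q]$ and $\widetilde\beta'=[\widetilde p',\widetilde q']$ with $\widetilde p,\widetilde p'\in\widetilde\alpha_1$ and $\widetilde q,\widetilde q'\in\widetilde\alpha_2$. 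Write $P=[f_1,f_2]$ for the common perpendicular of $\widetilde\alpha_1,\widetilde\alpha_2$, of length $d>0$. Since $\widetilde\beta$ and $\widetilde\beta'$ project to $\beta$ and $\beta'$, it is enough to show $\widetilde\beta=\widetilde\beta'$. Thus the lemma reduces to the following assertion: \emph{given two oriented ultraparallel geodesics $L_1,L_2$ of $\mathbb{H}^2$, a geodesic segment from a point of $L_1$ to a point of $L_2$ is determined by the oriented angle it makes with $L_1$ at its first endpoint together with the oriented angle it makes with $L_2$ at its second endpoint.}

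To prove this I would parametrize the geodesics $G$ crossing $L_1$ at the prescribed oriented angle $\theta_1$ by their intersection point $\widetilde p\in L_1$; on the open subinterval $I\subseteq L_1$ of those $\widetilde p$ for which $G=G(\widetilde p)$ also meets $L_2$, let $\widetilde q(\widetilde p)\in L_2$ be the second intersection point and $\theta_2(\widetilde p)$ the oriented angle there. The crux is that $\widetilde p\mapsto\theta_2(\widetilde p)$ is a strictly monotone bijection of $I$ onto $(0,\pi)$: as $\widetilde p$ tends to the two ends of $I$ the geodesic $G(\widetilde p)$ becomes asymptotic to $L_2$ in the two opposite senses, forcing $\theta_2\to 0$ at one end and $\theta_2\to\pi$ at the other; and strict monotonicity in between comes from the hyperbolic trigonometry of the (possibly non-convex) quadrilateral $f_1\,\widetilde p\,\widetilde q\,f_2$, which has right angles at $f_1$ and $f_2$ --- dropping the perpendicular from $\widetilde p$ to $L_2$ splits it into a trirectangle (a quadrilateral with three right angles) and a right triangle, and the standard relations for these (e.g. \cite[\S2.2--2.4]{Bu2}) exhibit $\theta_2$ as a monotone function of $\widetilde p$ once $\theta_1$ and $d$ are fixed. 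Hence $\theta_2$ determines $\widetilde p$, then $\widetilde q$, and finally the segment $[\widetilde p,\widetilde q]$; this gives the assertion, and with it the lemma. (This is precisely the input needed to complete the proof of Lemma~\ref{punct-torus}.)

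The part I expect to demand the most care is this last trigonometric verification, together with the routine but fiddly bookkeeping of orientations and of the signs of the position parameters --- equivalently, the case where $\widetilde\beta$ crosses the common perpendicular $P$, which is dispatched by first reflecting part of the configuration in $P$. Everything else in the argument is formal.
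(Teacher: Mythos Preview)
Your argument is correct, but the paper's own proof is considerably shorter and avoids the lift to $\mathbb{H}^2$ and all of the trigonometry you outline. The paper argues by contradiction directly on $Y$: if two arcs $\beta_1,\beta_2\in A$ had the same angle data $\Theta_Y(\beta_i,\alpha_1\cup\alpha_2)$, then either they are disjoint and together with subarcs of $\alpha_1,\alpha_2$ bound a contractible geodesic rectangle, or they intersect and bound a geodesic triangle with a subarc of one $\alpha_i$. In either case the equal-angle hypothesis forces the interior angles to sum to $2\pi$ (rectangle) or $\pi$ (triangle), so the area formula \eqref{areapolygon} gives zero area --- a contradiction.

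Your route is genuinely different: you first invoke the change-of-coordinates principle to put both arcs in one isotopy class, then lift to a common pair of ultraparallel lines in $\mathbb{H}^2$ and prove injectivity of $(\theta_1,\theta_2)\mapsto\widetilde\beta$ by a monotonicity argument using trirectangle/right-triangle identities. This is fine, but the step you single out as ``demanding the most care'' --- the monotonicity of $\theta_2(\widetilde p)$ --- is precisely the step that the paper's Gauss--Bonnet trick dispatches in one line (two segments with the same $\theta_1$ and the same $\theta_2$ bound a quadrilateral of area $2\pi-2\pi=0$). So your argument is valid, and the reduction to a clean statement in $\mathbb{H}^2$ is pleasant, but you are working harder than necessary; the paper's proof needs neither the isotopy-class observation, nor the lift, nor any explicit trigonometric formula.
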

\begin{proof}
 We argue by contradiction and assume that there are two such arcs $\beta_1, \beta_2$ with
 \begin{equation}\label{parallelangle}
  \Theta_Y(\beta_1, \alpha_1 \cup \alpha_2) = \Theta_Y(\beta_2, \alpha_1 \cup \alpha_2).
  \end{equation}
  There are two cases that need separate consideration: $(i) ~ \beta_1 \cap \beta_2 = \emptyset$ and $(ii) ~ \beta_1 \cap \beta_2 \neq \emptyset$. In the first case it is easy to observe that $\beta_1$ and $\beta_2$ along with subarcs of $\alpha_1$ and $\alpha_2$ forms a (contractible) geodesic rectangle. We reach the desired contradiction while calculating the area of this rectangle using \eqref{parallelangle}. In the second case we have a contractible triangle bounded by subarcs of $\beta_1$ and $\beta_2$ and a subarc of either $\alpha_1$ or $\alpha_2$. The area calculation of this triangle using \eqref{parallelangle} again provides the desired contradiction.
\end{proof}
\begin{cor}
Let $T$ be a hyperbolic one-holed torus with geodesic boundary. Let $\alpha$ be a simple closed geodesic on $T$. Assume that we have a simple geodesic arc $\gamma$ that joins two points on $\partial Y$ and intersects $\alpha$ exactly once. If the two angles in $\Theta_T(\gamma, \partial Y)$ are identical then $T$ has an isometry that interchanges the two points of intersection between $\partial Y$ and $\gamma$.	
\end{cor}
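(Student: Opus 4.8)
The plan is to cut $T$ open along $\alpha$, recognise $\gamma$ as the regluing of two arcs in a pair of pants, carry one of those arcs onto the other by a symmetry of the pair of pants coming from the equal cuff lengths, and then push that symmetry back down to an isometry of $T$.

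First observe that $\alpha$ is non-separating: a separating simple closed geodesic on a one-holed torus is parallel to $\partial T$, and no simple arc with both endpoints on $\partial T$ meets a $\partial T$-parallel curve exactly once (it would have to cross back). So cutting $T$ along $\alpha$ yields a pair of pants $Y$ whose boundary geodesics are the two copies $\alpha_1,\alpha_2$ of $\alpha$, with $\ell_Y(\alpha_1)=\ell_Y(\alpha_2)=\ell_T(\alpha)$, together with $\delta:=\partial T$; and $T$ is recovered from $Y$ by regluing $\alpha_1$ to $\alpha_2$ by an isometry $g$ that reverses the boundary orientations. Since $\gamma$ meets $\alpha$ transversally in a single point $x$, it is broken by the cut into two disjoint simple geodesic arcs $a_1,a_2\subset Y$, coming from the two sub-arcs of $\gamma$ lying on the two sides of $\alpha$ near $x$: $a_1$ joins the endpoint $p\in\delta$ of $\gamma$ to a point $x_1\in\alpha_1$, $a_2$ joins the other endpoint $q\in\delta$ to a point $x_2\in\alpha_2$, we have $g(x_1)=x_2$, and $a_1\cup a_2$ reassembles to $\gamma$.

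Let $\psi$ be the common value of the two angles of $\Theta_T(\gamma,\partial T)$; then $a_1$ and $a_2$ meet $\delta$, at $p$ and at $q$ respectively, each with angle $\psi$. Moreover the angle $\chi_1$ of $a_1$ with $\alpha_1$ at $x_1$ equals the angle $\chi_2$ of $a_2$ with $\alpha_2$ at $x_2$: near $x$ the arcs $a_1,a_2$ are the two opposite rays of $\gamma$ and $\alpha_1,\alpha_2$ are the two sides of $\alpha$, and the rotation by $\pi$ about $x$ is an orientation-preserving local isometry that interchanges the two configurations and also interchanges the boundary orientations $Y$ induces on $\alpha_1,\alpha_2$. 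Because $\ell_Y(\alpha_1)=\ell_Y(\alpha_2)$, the pair of pants $Y$ has an orientation-preserving isometric involution $\tau$ with $\tau(\alpha_1)=\alpha_2$, $\tau(\alpha_2)=\alpha_1$ and $\tau(\delta)=\delta$, namely $\tau=R\circ R'$ where $R$ is the reflection of $Y$ through its three common perpendiculars and $R'$ the reflection exchanging $\alpha_1$ and $\alpha_2$ ($R$ and $R'$ commute, so $\tau$ is an involution). Then $\tau(a_1)$ joins $\delta$ to $\alpha_2$, the same pair of boundary geodesics as $a_2$, and since $\tau$ is an orientation-preserving isometry, $\Theta_Y(\tau(a_1),\delta\cup\alpha_2)=\Theta_Y(a_1,\delta\cup\alpha_1)=(\psi,\chi_1)=(\psi,\chi_2)=\Theta_Y(a_2,\delta\cup\alpha_2)$; hence by Lemma \ref{Ypiece}, applied to the pair of boundary geodesics $\delta$ and $\alpha_2$ of $Y$, we get $\tau(a_1)=a_2$, and in particular $\tau(p)=q$ and $\tau(x_1)=x_2$.

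It remains to check that $\tau$ descends through the regluing to an isometry $\bar\tau$ of $T$. The only identifications with more than one point are the pairs $\{z,g(z)\}$, $z\in\alpha_1$, and $\tau$ sends such a pair to $\{\tau(z),\tau(g(z))\}$, which is again of this form exactly when $g\circ\tau|_{\alpha_2}\circ g=\tau|_{\alpha_1}$; since $\tau^2=\id$ this is the statement that $(\tau|_{\alpha_1})^{-1}\circ g$ is an involution of the circle $\alpha_1$, which holds automatically because $g$ reverses while $\tau|_{\alpha_1}$ preserves the boundary orientations, so $(\tau|_{\alpha_1})^{-1}\circ g$ is an orientation-reversing isometry of a circle, i.e. a reflection. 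Hence $\bar\tau$ is a well-defined isometry of $T$, and since $p,q$ lie on $\partial T$, which the gluing does not touch, $\bar\tau(p)=q$ and $\bar\tau(q)=p$ (using $\tau^2=\id$). This is the required isometry interchanging the two intersection points of $\gamma$ with $\partial T$. The step I expect to be most delicate is precisely this descent, together with the orientation bookkeeping behind $\chi_1=\chi_2$ and behind $\Theta_Y(\tau(a_1),\cdot)=\Theta_Y(a_2,\cdot)$; the rest is routine cut-and-paste.
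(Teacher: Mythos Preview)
Your proof is correct and follows essentially the same route as the paper: cut $T$ along $\alpha$, use the order-two rotational isometry of the resulting pair of pants swapping the two copies of $\alpha$, and apply Lemma~\ref{Ypiece} to match the two sub-arcs of $\gamma$. Your argument is in fact more complete than the paper's, which stops after concluding that the rotation carries one arc to the other and leaves the descent to $T$ implicit; your check that $(\tau|_{\alpha_1})^{-1}\circ g$ is an orientation-reversing isometry of the circle (hence a reflection, hence an involution) fills exactly that gap and identifies the descended map as the hyperelliptic involution of $T$.
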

\begin{proof}
Cut $T$ along $\alpha$ to get the pair of pants $Y$ and denote the two copies of $\alpha$ on $\partial Y$ by $\alpha_1$ and $\alpha_2$. Consider the two components of $\gamma \setminus \alpha$. Denote the component that joins $\partial T$ and $\alpha_i$ by $\gamma_i$.

Observe that $\Theta_T(\gamma_1, \partial T \cup \alpha_1) = \Theta_T(\gamma_2, \partial T \cup \alpha_2)$. Now $Y$ has a rotational isometry around the midpoint of the mutual perpendicular between $\alpha_1$ and $\alpha_2$. By the last lemma we conclude that $\gamma_2$ is the image of $\gamma_1$ under the rotational isometry of $Y$.   
\end{proof}
Now we are ready to finish the proof of Theorem \ref{markedrigidity}. By the lemma above we observe that the isometry between $\mathcal{Y}(\alpha_i, \alpha_j, \alpha_k)$ and $\mathcal{Y}(\alpha'_i, \alpha'_j, \alpha'_k)$ actually sends
\begin{equation}
\gamma_0|_{\mathcal{Y}(\alpha_i, \alpha_j, \alpha_k)} \to \gamma'_0|_{\mathcal{Y}'(\alpha'_i, \alpha'_j, \alpha'_k)}.	
\end{equation}
Hence it suffices to prove that there is essentially a unique way of gluing the pairs of pants obtained from $\mathcal{P}_0$ or equivalently from $\mathcal{P}'_0$. The above lemma (and the assumption that $\Phi(\gamma_0, \alpha_i)$ are pairwise disjoint) imply that there essentially is no choice along non-separating $\alpha_i$s. Along a separating $\alpha_i$ there is a possibility of a twist that would interchange the two points of intersections between $\gamma_0$ and $\alpha_i$. Since the ordering of appearance of $\alpha_i$ along $\gamma_0$ is fixed this can not happen at any  but those $\alpha_i$s that bound one holed torus. By the last corollary it is clear that even if there are such choices the resulting surfaces obtained from different choices are isometric. 
\end{proof}
\section{Dehn twist: length and angles}
For the definition of Dehn twist homeomorphisms we refer the reader to \S1. This section is devoted to the understanding of the following two questions. Let $\alpha, \beta, \gamma$ be three simple closed geodesics on $S$. 
\begin{que}\label{q1}
How does the length of $\mathcal{D}^n_\alpha(\beta)$ grow with respect to the quantities $n, \iota(\alpha, \beta), \ell(\alpha)$ etc ?
\end{que}
\begin{que}\label{q2}
Is there any structural property inside an angle set and in particular in $\Theta_S(\gamma, \mathcal{D}^n_\alpha(\beta))$ ? If so how do they change with respect to $n$ ?
\end{que}
Of course we need to be more precise about the last question. We refer the reader to \S3.3 for this.
\subsection{Intersection}
To answer these two questions we shall need to know how the intersection number $\iota(\gamma, \mathcal{D}^n_\alpha(\beta))$ grow with respect to the numbers $n, \iota(\alpha, \beta),$ $\iota(\alpha, \gamma)$ and $\iota(\beta, \gamma)$. The following estimates, in this direction, are from \cite[Proposition 3.2]{F-M} and \cite[Lemma 4.2]{I}. Let $\alpha_1, ..., \alpha_k$ be $k$ mutually disjoint simple closed geodesics on $S$. For a simple closed geodesic $\gamma$ and a tuple $(n_1, ..., n_k) \in \mathbb{Z}^k$ let $T_{n_1, ..., n_k}(\gamma)$ denote the closed geodesic freely homotopic to $D^{n_1}_{\alpha_1}\circ D^{n_2}_{\alpha_2}\cdots D^{n_k}_{\alpha_k}(\gamma)$. 
\begin{prop}\label{intersection} $(1) ~~ \iota(\mathcal{D}_{\alpha_i}^n(\gamma), \alpha_i) = |n| {\iota(\gamma, \alpha_i)^2}$.
 \begin{equation*}
(2) ~~ \sum_{i=1}^k (|n_i|-2)\iota(\alpha_i, \beta)\iota(\alpha_i, \gamma) - \iota(\gamma, \beta) \le \iota(T_{n_1, ..., n_k}(\beta), \gamma) $$$$
 \end{equation*}
 \begin{equation*}
(3) ~~ \iota(T_{n_1, ..., n_k}(\beta), \gamma) \le \sum_{i=1}^k |n_i|\iota(\alpha_i, \beta)\iota(\alpha_i, \gamma) + \iota(\gamma, \beta).
 \end{equation*}
\end{prop}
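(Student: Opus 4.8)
My plan is to recover these three (classical) estimates from a strand-by-strand count inside the collars of the $\alpha_i$; if one does not want a self-contained argument, one simply cites \cite[Proposition 3.2]{F-M} and \cite[Lemma 4.2]{I}. First I would realize $\alpha_1,\dots,\alpha_k,\beta,\gamma$ as geodesics, so that every pair is in minimal position, and pass to the disjoint collars $\mathcal{C}_{\alpha_i}$ of the Collar Theorem (\S1), inside each of which the twist homeomorphism $D_{\alpha_i}$ is supported, say on the sub-cylinder $[-T,T]\times\mathbb{S}^1$; after an isotopy we may assume $\gamma$ meets this sub-cylinder in exactly $c_i:=\iota(\alpha_i,\gamma)$ almost radial arcs. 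Write also $b_i:=\iota(\alpha_i,\beta)$ and $e:=\iota(\beta,\gamma)$. Since the collars are disjoint and each $D_{\alpha_j}$ with $j\neq i$ fixes $\alpha_i$ pointwise while $D_{\alpha_i}$ fixes $\alpha_i$ setwise, the composition $\Phi=D^{n_1}_{\alpha_1}\circ\cdots\circ D^{n_k}_{\alpha_k}$ maps each $\alpha_i$ to itself; hence the representative $\beta':=\Phi(\beta)$ of $T_{n_1,\dots,n_k}(\beta)$ has $\iota(\beta',\alpha_i)=b_i$, it agrees with $\beta$ outside $\bigcup_i\mathcal{C}_{\alpha_i}$, and inside the twisting sub-cylinder of $\mathcal{C}_{\alpha_i}$ it consists of $b_i$ end-to-end arcs, each obtained by winding one almost radial strand of $\beta$ exactly $|n_i|$ times around $\alpha_i$.

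For the upper bound $(3)$ I would count the intersections of this representative $\beta'$ with the geodesic $\gamma$. Outside all the collars $\beta'=\beta$, so at most $e$ of them occur there. Inside the twisting sub-cylinder of $\mathcal{C}_{\alpha_i}$ a winding-number computation in the annulus shows that, after a generic perturbation of the endpoints on $\partial\mathcal{C}_{\alpha_i}$, each of the $b_i$ winding strands of $\beta'$ meets each of the $c_i$ almost radial strands of $\gamma$ at most $|n_i|$ times, contributing at most $|n_i|b_ic_i$; summing over $i$ gives $\iota(T_{n_1,\dots,n_k}(\beta),\gamma)\le\sum_i|n_i|b_ic_i+e$, which is $(3)$. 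Taking $k=1$ and $\beta=\gamma$ isolates the count in a single annulus, where the two transverse families of arcs admit no removable bigon and the count is exact, giving the equality $\iota(\mathcal{D}^{n}_{\alpha_i}(\gamma),\gamma)=|n|\,\iota(\gamma,\alpha_i)^2$ of $(1)$.

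The real point is the lower bound $(2)$, which concerns the minimum over \emph{all} representatives; here I would realize $T_{n_1,\dots,n_k}(\beta)$ itself as a geodesic $g$ and put it in minimal position with the geodesic $\gamma$. The winding of $g$ around each $\alpha_i$ is forced by the free homotopy class and cannot be pushed out of $\mathcal{C}_{\alpha_i}$: by the Collar Theorem $g$ must run almost parallel to $\alpha_i$ through roughly $|n_i|$ loops. Consequently each of the $b_ic_i$ pairs consisting of a winding strand of $g$ and an almost radial strand of $\gamma$ still crosses at least $|n_i|-1$ times inside the collar; at most one further crossing per pair can be lost in the way the arcs close up across $\partial\mathcal{C}_{\alpha_i}$, and at most $e$ crossings globally can be absorbed into the original $\beta$--$\gamma$ crossings, yielding $\iota(T_{n_1,\dots,n_k}(\beta),\gamma)\ge\sum_i(|n_i|-2)b_ic_i-e$. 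Equivalently one can argue through the annular covers $\widehat S_{\alpha_i}$, using that the annular subsurface projection distance satisfies $d_{\alpha_i}(\beta,\beta')=|n_i|+O(1)$ together with the standard lower bound for intersection number in terms of such distances. The main obstacle is precisely this efficiency statement --- that passing to minimal (equivalently, geodesic) position destroys only $O(b_ic_i)$ crossings per collar and $O(e)$ globally --- which is where one genuinely needs the bigon criterion together with an analysis of how a bigon between $\beta'$ and $\gamma$ can meet the collars; note that for $|n_i|\le 2$ the asserted inequality is vacuous, so only the large-twist regime requires this care.
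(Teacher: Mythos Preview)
The paper does not prove this proposition at all: it simply records the estimates and cites \cite[Proposition~3.2]{F-M} and \cite[Lemma~4.2]{I}. You already note this citation option in your first sentence, so in that sense your proposal matches the paper exactly. The remainder of your write-up goes further and sketches a self-contained collar-by-collar count, which is essentially the argument behind those references; the upper bound via a concrete representative and the equality in (1) via the bigon criterion are exactly the standard proofs, and you are right to flag the lower bound (2) as the only step requiring genuine care (your two suggested routes---direct bigon analysis in the collars, or annular subsurface projection distances---are both legitimate ways to fill it in).

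One small remark worth making explicit: in your derivation of (1) you write $\iota(\mathcal{D}^{n}_{\alpha_i}(\gamma),\gamma)=|n|\,\iota(\gamma,\alpha_i)^2$, which is indeed the formula in \cite[Proposition~3.2]{F-M}. The paper's displayed statement has $\alpha_i$ rather than $\gamma$ in the second slot, which as written would be false (since $D_{\alpha_i}$ fixes the class of $\alpha_i$, one always has $\iota(\mathcal{D}_{\alpha_i}^n(\gamma),\alpha_i)=\iota(\gamma,\alpha_i)$); your silent correction is the intended reading.
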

One more topological result on the intersection number would be important for later development. Let $\mathcal{A}$ be either $\mathcal{C}_\alpha$ or one of the two components of $\overline{\mathcal{C}_\alpha \setminus \alpha}$. Recall that simple geodesic arcs in $\mathcal{A}$ that joins two points one in each component of $\partial{\mathcal{A}}$ are called end-to-end geodesic arcs.
\begin{lem}\label{parallel}
	Let $\beta_1, \beta_2$ be two non-intersecting end-to-end geodesic arcs in $\mathcal{A}$. Then for any end-to-end geodesic arc $\eta$ in $\mathcal{A}$ one has:
	\begin{equation*}
		|\iota(\eta, \beta_1) - \iota(\eta, \beta_2)| \le 1.
	\end{equation*}
\end{lem}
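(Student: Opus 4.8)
The plan is to work entirely in the two-dimensional topological picture, forgetting the hyperbolic metric except for the fact that $\beta_1,\beta_2$ and $\eta$ are simple arcs in $\mathcal{A}$ meeting minimally. First I would reduce to the case where $\mathcal{A}$ is a closed cylinder (or a half-cylinder with its straight boundary $\alpha$), so that $\mathcal{A}$ is an annulus $I\times\mathbb{S}^1$. An end-to-end geodesic arc is then, by the graph description in \S1.3, the graph of a smooth function $I\to\mathbb{S}^1$; two such arcs are geodesics so they can be isotoped (rel endpoints in each boundary circle) to intersect minimally, and since each is a graph over $I$, two minimally-intersecting end-to-end arcs in an annulus meet in a number of points that depends only on the homotopy classes rel $\partial$, i.e.\ on the ``winding difference'' of the two graph maps.

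The key step is to parametrize end-to-end arcs by this winding. Lift $\mathcal{A}$ to its universal cover $I\times\mathbb{R}$, so that an end-to-end arc $\xi$ lifts to the graph of a function $\tilde\Psi_\xi\colon I\to\mathbb{R}$, well-defined once we fix the lift of one endpoint. For two end-to-end arcs $\xi,\zeta$ the geometric intersection number $\iota(\xi,\zeta)$ in $\mathcal{A}$ equals the number of integer translates of $\tilde\Psi_\zeta$ whose graph crosses the graph of $\tilde\Psi_\xi$; because both are monotone-ish graphs over $I$ that are actual geodesics, each such translate crosses at most once, and a translate crosses iff the net difference $\tilde\Psi_\xi-(\tilde\Psi_\zeta+2\pi m)$ changes sign on $I$, which happens for $m$ in an interval of integers whose length is within $1$ of $\tfrac{1}{2\pi}\big|(\tilde\Psi_\xi-\tilde\Psi_\zeta)(b)-(\tilde\Psi_\xi-\tilde\Psi_\zeta)(a)\big|$, where $I=[a,b]$. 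Hence $\iota(\xi,\zeta)$ differs by at most $1$ from this ``relative winding'' quantity, which I will call $w(\xi,\zeta)$; note $w$ is additive in the sense $w(\eta,\beta_1)-w(\eta,\beta_2)=w(\beta_1,\beta_2)$ (it is really a difference of boundary winding numbers).

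Now combine: since $\beta_1,\beta_2$ are \emph{non-intersecting} end-to-end arcs, $\iota(\beta_1,\beta_2)=0$, so $|w(\beta_1,\beta_2)|\le 1$ by the estimate above; in fact, two disjoint graphs over $I$ that agree in ``type'' (both run endpoint to endpoint) can differ in winding by at most $1$, and I would argue this directly: if $w(\beta_1,\beta_2)$ were $\ge 2$ in absolute value then some integer translate of the lift of $\beta_2$ would be forced, by the intermediate value theorem, to cross the lift of $\beta_1$, contradicting disjointness. Then for any end-to-end arc $\eta$,
\begin{equation*}
\big|\iota(\eta,\beta_1)-\iota(\eta,\beta_2)\big|\le \big|\iota(\eta,\beta_1)-w(\eta,\beta_1)\big| + \big|w(\eta,\beta_1)-w(\eta,\beta_2)\big| + \big|w(\eta,\beta_2)-\iota(\eta,\beta_2)\big|.
\end{equation*}
The middle term is $|w(\beta_1,\beta_2)|\le 1$ and the outer two are each at most... — this is where one must be slightly careful, since a naive version of this bound gives $3$, not $1$.

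The main obstacle, therefore, is sharpening the constant from the crude triangle-inequality bound of $3$ down to $1$. I expect the right way to do this is not to pass through $w$ additively but to argue geometrically in the annulus directly: cut $\mathcal{A}$ along $\eta$ to get a disc (or a half-disc when $\mathcal{A}$ is a half-collar), in which $\beta_1$ and $\beta_2$ become disjoint collections of arcs; $\iota(\eta,\beta_j)+1$ is the number of such sub-arcs coming from $\beta_j$, and disjointness of $\beta_1,\beta_2$ together with the fact that each $\beta_j$ is a single arc forces the two arc-collections in the disc to be ``nested/parallel'', so the number of sub-arcs can differ by at most one. Making this nesting argument precise — keeping track of the endpoint data on the two boundary components of $\mathcal{A}$ and handling the half-collar case where one boundary component is $\alpha$ itself — is the technical heart; the winding-number computation above should then serve only as a sanity check and as the source of the ``$\le 1$'' in the disjoint case.
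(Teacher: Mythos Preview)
Your winding-number approach is honest about its own failure: the triangle inequality through $w$ only yields $3$, and you never recover the lost factor. Your fallback---cutting $\mathcal{A}$ along $\eta$ and arguing that the resulting disjoint arc-collections from $\beta_1$ and $\beta_2$ are ``nested/parallel'' so their cardinalities differ by at most one---is the right instinct but is left as an assertion. After cutting along $\eta$ the pieces of $\beta_1$ and $\beta_2$ are disjoint arcs in a rectangle, but there is no a priori reason they alternate or nest; most pieces run from one $\eta$-copy to the other, two pieces of each $\beta_i$ carry an endpoint on a boundary-circle side, and you have not explained what constraint forces the counts to agree within one. So as written the proposal has a genuine gap at exactly the point you flag as ``the technical heart.''

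The paper's argument is both different and shorter, and it sidesteps the bookkeeping you are worried about. Instead of cutting along $\eta$, cut $\mathcal{A}$ along $\beta_1$. Since $\beta_1$ is an end-to-end arc, this turns the annulus into a topological rectangle $\mathcal{R}$; since $\beta_2$ is disjoint from $\beta_1$, it sits entirely inside $\mathcal{R}$. Now assume $\iota(\eta,\beta_2)\ge\iota(\eta,\beta_1)$ and $\iota(\eta,\beta_2)>1$, take two \emph{consecutive} (along $\eta$) intersection points $x,y$ of $\eta$ with $\beta_2$, and let $\eta'$ be the subarc of $\eta$ between them. If $\eta'$ missed $\beta_1$, then $\eta'$ would also lie in $\mathcal{R}$, and $\eta'$ together with the subarc of $\beta_2$ from $x$ to $y$ would bound a bigon in the disc $\mathcal{R}$---impossible for geodesic arcs in minimal position. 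Hence each consecutive pair of $\eta\cap\beta_2$ points is separated along $\eta$ by a point of $\eta\cap\beta_1$, giving $\iota(\eta,\beta_1)\ge\iota(\eta,\beta_2)-1$. The asymmetry in the choice of which arc to cut along is what makes the argument one line instead of a case analysis; your cut along $\eta$ treats $\beta_1$ and $\beta_2$ symmetrically and loses exactly this leverage.
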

\begin{proof}
	Without loss of generality we may assume that
	\begin{equation*}
		\iota(\eta, \beta_2) \ge \iota(\eta, \beta_1) ~~ \textrm{and} ~~ \iota(\eta, \beta_2) > 1.
	\end{equation*}
	Fix two points of intersection $x, y$ between $\eta$ and $\beta_2$ that occur consecutively along $\eta$ and let $\eta'$ be the subarc of $\eta$ lying between $x$ and $y$. It suffices to prove that $\eta'$ intersects $\beta_1$ at least once. We argue by contradiction and assume that $\eta'$ and $\beta_1$ are disjoint. Cutting $\mathcal{C}_\alpha$ along $\beta_1$ we obtain a rectangle $\mathcal{R}_\alpha(\beta_1)$. Since $\beta_2$ and $\eta'$ do not intersect $\beta_1$ both of them are contained in $\mathcal{R}_\alpha(\beta_1)$. In particular we have a loop formed by $\eta'$ and the subarc of $\beta_2$ between $x$ and $y$ which is contained in $\mathcal{R}_\alpha(\beta_1)$, a topological disc. This is an impossibility.
\end{proof}
\subsection{Lengths}
 Now we consider the length of $\mathcal{D}^n_\alpha(\beta)$. The next estimate is probably well-known to experts but the author was unable to locate a reference.
\begin{prop}\label{length}
 Let $\alpha_1, \alpha_2, ..., \alpha_k$ be $k$ mutually disjoint simple closed geodesics on $S$. Then for any $\beta \in \mathcal{G}(S)$ there exist non-negative integers $k_i = {k_i}(\alpha_i, \beta)$ such that for any $(n_1, n_2, ..., n_k) \in \mathbb{Z}^k$ with $|n_i|$ sufficiently large one has:
  \begin{equation*}
  (1) \ell(T_{n_1, ..., n_k}(\beta)) \le \sum_{i=1}^{k} \iota(\alpha_i, \beta) |n_i| \ell(\alpha_i) + \ell(\beta)
  $$$$
  (2) \sum_{i=1}^{k} \iota(\alpha_i, \beta) (|n_i| - k_i) \ell(\alpha_i) \le \ell(T_{n_1, ..., n_k}(\beta)).
  \end{equation*}
\end{prop}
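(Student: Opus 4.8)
The plan is to obtain both bounds by comparing the geodesic $T_{n_1,\dots,n_k}(\beta)$ with an explicit piecewise-geodesic representative of its free homotopy class, constructed inside the disjoint collars $\mathcal{C}_{\alpha_i}$. For the upper bound $(1)$, I would take the closed geodesic $\beta$ and modify it inside each collar $\mathcal{C}_{\alpha_i}$: each of the $\iota(\alpha_i,\beta)$ strands of $\beta$ crossing $\mathcal{C}_{\alpha_i}$ is an end-to-end arc, and applying $D_{\alpha_i}^{n_i}$ wraps it $|n_i|$ extra times around $\alpha_i$. Replacing the twisted strand by a path that first runs along (a curve freely homotopic into) $\alpha_i$ for $|n_i|$ loops and then crosses the collar produces a representative of the class of $T_{n_1,\dots,n_k}(\beta)$ whose length is at most $\sum_i \iota(\alpha_i,\beta)|n_i|\ell(\alpha_i)$ plus the length of the part of $\beta$ outside the collars, which is at most $\ell(\beta)$. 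Since the geodesic in a free homotopy class is the shortest representative, $(1)$ follows. (The disjointness of the collars, guaranteed by the Collar Theorem, is what makes the contributions from different $\alpha_i$ additive.)

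For the lower bound $(2)$, the idea is to use the intersection estimate of Proposition \ref{intersection}(1) together with the collar width. Each time a geodesic crosses the collar $\mathcal{C}_{\alpha_i}$ transversally it must travel a definite distance, but more usefully: the geodesic $T_{n_1,\dots,n_k}(\beta)$ intersects $\alpha_i$ exactly $\iota(T_{n_1,\dots,n_k}(\beta),\alpha_i)$ times, and by Proposition \ref{intersection}(1) (applied after absorbing the twists along the other $\alpha_j$, which are disjoint from $\alpha_i$ and hence do not change $\iota(\cdot,\alpha_i)$) this number is comparable to $|n_i|\,\iota(\alpha_i,\beta)^2$. Each strand of $T_{n_1,\dots,n_k}(\beta)$ running between two consecutive intersections with $\alpha_i$ that are separated (in the cyclic order along $\alpha_i$) must have length bounded below in terms of $\ell(\alpha_i)$; summing these lower bounds over the $i$ and over the strands, and using Lemma \ref{parallel} to control how the strands of the twisted curve lie relative to the radial arcs, yields $\sum_i \iota(\alpha_i,\beta)(|n_i|-k_i)\ell(\alpha_i) \le \ell(T_{n_1,\dots,n_k}(\beta))$ for suitable constants $k_i$ depending only on $(\alpha_i,\beta)$ and for $|n_i|$ large. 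The constants $k_i$ absorb the finitely many strands near the endpoints of the twisting region and the bounded discrepancies coming from Lemma \ref{parallel}.

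Carrying out $(2)$ carefully is the main obstacle. The subtlety is that $T_{n_1,\dots,n_k}(\beta)$ is the \emph{geodesic} representative, and one must rule out the possibility that it saves length by cutting corners across several collars simultaneously or by having strands that spiral efficiently. The cleanest route I would pursue is to project: let $\pi_i:\mathcal{C}_{\alpha_i}\to\alpha_i$ be the Fermi projection, and observe that the portion of $T_{n_1,\dots,n_k}(\beta)$ inside $\mathcal{C}_{\alpha_i}$ must, after projection, cover $\alpha_i$ a number of times essentially equal to $|n_i|\iota(\alpha_i,\beta)$ (this is a homological/winding-number statement, robust under homotopy); since $\pi_i$ is distance-nonincreasing, the length of that portion is at least $|n_i|\iota(\alpha_i,\beta)\ell(\alpha_i)$ minus a bounded error. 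Because the collars $\mathcal{C}_{\alpha_i}$ are pairwise disjoint, these lower bounds add, giving $(2)$. The bookkeeping of the bounded errors — ensuring a single constant $k_i$ works uniformly in $(n_1,\dots,n_k)$ — is the part requiring care, but it is routine once the winding-number statement is in place.
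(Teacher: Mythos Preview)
Your upper bound (1) is correct and matches the paper's. For the lower bound (2), the Fermi projection is indeed $1$-Lipschitz (this is exactly how Lemma~\ref{length1} works), but your central claim---that $T_{n_1,\dots,n_k}(\beta)|_{\mathcal{C}_{\alpha_i}}$ projects to cover $\alpha_i$ essentially $|n_i|\,\iota(\alpha_i,\beta)$ times, as ``a homological/winding-number statement, robust under homotopy''---is not justified, and it is precisely the content that needs proving. The angular form $d\theta_i$ on $\mathcal{C}_{\alpha_i}$ does not extend to a closed $1$-form on $S$; any closed extension is cohomologous to the Poincar\'e dual of $\alpha_i$ and detects only the algebraic intersection with $\alpha_i$, which equals $\iota(\alpha_i,\beta)$ regardless of the $n_j$. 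Under the free homotopy from $D^{n_1}_{\alpha_1}\cdots D^{n_k}_{\alpha_k}(\beta)$ to its geodesic representative, the endpoints of each collar-strand on $\partial\mathcal{C}_{\alpha_i}$ can slide around the boundary circles, and nothing in your outline bounds that sliding. (Your earlier appeal to Proposition~\ref{intersection}(1) also misfires: Dehn twists along the $\alpha_j$ fix every $\alpha_i$, so $\iota(T_{n_1,\dots,n_k}(\beta),\alpha_i)=\iota(\beta,\alpha_i)$ for all $(n_1,\dots,n_k)$; the intersection with $\alpha_i$ itself does not grow.)

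The paper supplies the missing invariant by introducing an auxiliary \emph{closed} geodesic $\gamma$ meeting every $\alpha_i$ in almost-radial arcs. The global number $\iota(T_{n_1,\dots,n_k}(\beta),\gamma)$ \emph{is} a free-homotopy invariant, and Proposition~\ref{intersection}(2) bounds it below by $\sum_i(|n_i|-2)\iota(\alpha_i,\beta)\iota(\alpha_i,\gamma)-\iota(\gamma,\beta)$. Lemma~\ref{uniformintersection} confines all but boundedly many of these intersections to $\cup_i\mathcal{C}_{\alpha_i}$, and a contradiction argument (if one collar's count were too small, another's would exceed the easy upper bound~\eqref{upper}) forces each collar to carry its expected share of intersections with the radial sub-arcs of $\gamma$. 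Feeding that into Lemma~\ref{length1} yields (2). The passage through a closed auxiliary curve---to convert ``winding in a collar'' into a genuine intersection number of closed curves---is the idea your sketch lacks.
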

\begin{proof}
 For the upper bound observe that $D^{n_1}_{\alpha_1}\circ D^{n_2}_{\alpha_2}\cdots D^{n_k}_{\alpha_k}(\beta)$ is freely homotopic to the union of $\beta$ and $\iota(\alpha_i, \beta)|n_i|$ copies of $\alpha_i$ for $i=1, ..., k$ after removing the points intersection properly \cite{F-M}. Since $T_{n_1, ..., n_k}(\beta)$ is the geodesic in this free homotopy class, the upper bound follows.

 For the lower bound we consider the collar $\mathcal{C}_{\alpha_i} \subset S$ around $\alpha_i$. Since $\mathcal{C}_{\alpha_i}$ and $\mathcal{C}_{\alpha_j}$ are mutually disjoint for $i \neq j$ it suffices to consider the length of $T_{n_1, n_2, ..., n_k}(\beta)$ restricted to each $\mathcal{C}_{\alpha_i}$. By Lemma \ref{length1} in the Appendix for any simple closed geodesic $\delta$ we have:
 \begin{equation*}
 \ell(\delta|_{\mathcal{C}_{\alpha_i}}) \ge (\iota(\eta_i, \delta) - 2\iota(\alpha_i, \delta))\ell(\alpha_i),
 \end{equation*}
 where $\eta_i$ is an almost radial (see \S1.1) arc in ${\mathcal{C}_{\alpha_i}}$. So it suffices to find a simple closed geodesic whose restrictions to $\mathcal{C}_{\alpha_i}$ has at least one almost radial arc $\eta_i$ such that
 \begin{equation}\label{lower}
 \iota(T_{n_1, ..., n_k}(\beta), \eta_i) \ge \iota(\beta, \alpha_i)(|n_i| - k_i),
 \end{equation}
 for some $k_i= k(\alpha_i, \beta)$. Observe that by a similar argument as in the first paragraph (of this proof) we can easily see that for any such geodesic arc $\eta_i$ we have the upper bound
 \begin{equation}\label{upper}
 \iota(T_{n_1, ..., n_k}(\beta), \eta_i) \le \iota(\beta, \alpha_i)|n_i| + \iota(\beta, \eta_i).
 \end{equation}
 
 Let $\gamma$ be a geodesic on $S$ that intersects all the $\alpha_i$s for $i=1, ..., k$. Replacing $\gamma$ by certain combination of Dehn twists of $\gamma$ along $\alpha_i$s, if necessary, we can assume that each sub-arc of $\gamma$ in each $\mathcal{C}_{\alpha_i}$ is an almost radial arc. Applying Proposition \ref{intersection}(2) to $\gamma$ along with Lemma \ref{uniformintersection} from the Appendix we have $n = n(\gamma, \alpha_1, ..., \alpha_k)$ such that
 \begin{equation}\label{totalint}
  \sum_{i=1}^k\iota(T_{n_1, ..., n_k}(\beta)|_{\mathcal{C}_{\alpha_i}}, \gamma|_{\mathcal{C}_{\alpha_i}}) \ge \sum_{i=1}^k (|n_i| - 2)\iota(\alpha_i, \beta)\iota(\alpha_i, \gamma)$$$$ - \iota(\gamma, \beta) - n.
 \end{equation}
 To complete the proof we argue by contradiction and assume that there is a sequence $(n_{1, j}, ..., n_{k, j})_j$ such that for any geodesic arc $\eta$ appearing as subarcs of $\gamma|_{\mathcal{C}_{\alpha_1}}$ we have
 \begin{equation}\label{radial}
  \iota(T_{{n_{1, j}}, ..., {n_{k, j}}}(\beta), \eta) < (|n_{1, j}| - k(n_{1, j})) \iota(\beta, \alpha_i)
 \end{equation}
 where $k(n_{1, j}) \to \infty$ as $|n_{1, j}| \to \infty$. By \eqref{totalint} we have
 \begin{equation*}
  \sum_{i=1}^k \iota(T_{n_1, ..., n_k}(\beta)|_{\mathcal{C}_{\alpha_i}}, \gamma|_{\mathcal{C}_{\alpha_i}}) \ge \sum_{i=1}^k (|n_i| - 2) \iota(\alpha_i, \beta)\iota(\alpha_i, \gamma)$$$$ - \iota(\gamma, \beta) - n.
 \end{equation*}
 Using \eqref{radial} we get
 \begin{equation*}
  \sum_{i=2}^k \iota(T_{n_1, ..., n_k}(\beta)|_{\mathcal{C}_{\alpha_i}}, \gamma|_{\mathcal{C}_{\alpha_i}}) - \sum_{i=2}^k (|n_i| - 2)\iota(\alpha_i, \beta)\iota(\alpha_i, \gamma)$$$$  \ge (k(n_{1, i})-2)\iota(\alpha_i, \beta)\iota(\alpha_i, \gamma) - \iota(\gamma, \beta) - n \to \infty
 \end{equation*}
 as $n_{1, i} \to \infty$. This is contradictory to
 by \eqref{upper}.
 \end{proof}
 \begin{rem}\label{eachintersect}
 By the last part of the proof it follows that for any $\gamma, \beta$ and $\alpha_1, \cdots, \alpha_k$ as above we have
 \begin{equation}
 	\iota(T_{n_1, ..., n_k}(\beta)|_{\mathcal{C}_{\alpha_i}}, \gamma|_{\mathcal{C}_{\alpha_i}}) \approx |n_i|\iota(\beta, \alpha_i)\iota(\gamma, \alpha_i)
 \end{equation}
 where the implied constant may depend on the involved geodesics.
 \end{rem}
 \subsection{Angles}
 Here we study some {\it structural properties} of angle sets. In the simplest case we take two simple closed geodesics $\alpha$ and $\beta$ with $\iota(\alpha, \beta) = 1$ and consider the sequence $\beta_n= \mathcal{D}^n_{\alpha}(\beta)$. For our understanding of $\Theta_S(\alpha, \beta_n)$ it would be sufficient to understand $\Theta_S(\beta|_{\mathcal{C}_{\alpha}}, {\beta_n}|_{\mathcal{C}_\alpha})$. We begin our study by counting the number of intersections between $\beta$ and $\beta_n$ lying in the two components of $\mathcal{C}_{\alpha} \setminus \alpha$. It is reasonable to believe that these two numbers are approximately the same. Since this fact is important for us we start by giving a proof of this fact.
 \subsubsection{End-to-end geodesic arcs}
 Observe that for any simple closed geodesic $\beta$, any of its subarcs in ${\mathcal{C}_\alpha}$ is, what we called, an end-to-end geodesic arc. Recall that an end-to-end arc is a smooth simple curve on ${\mathcal{C}_\alpha}$ that are graphs of functions $[-w(\alpha), w(\alpha)] \to \mathbb{S}^1$, where we use the Fermi coordinates to identify ${\mathcal{C}_\alpha}$ with $[-w(\alpha), w(\alpha)] \times \mathbb{S}^1$.  
 
 We shall need two important but simple facts about end-toend arcs. First, for any smooth simple arc $\gamma$ in $\mathcal{C}_\alpha$ there is a unique end-to-end geodesic arc $\chi$ in $\mathcal{C}_\alpha$ that is homotopic to $\gamma$ under the end point fixing homotopy. Second, the number of intersection $\iota(\gamma_1, \gamma_2)$ between any two smooth simple arcs $\gamma_1, \gamma_2$ in $\mathcal{C}_\alpha$ is at least the number of intersection $\iota(\chi_1, \chi_2)$ between their respective geodesic representatives $\chi_1, \chi_2$ under the end point fixing homotopy. These two facts can be seen easily by taking lifts of involved curves to $\mathbb{H}^2$.
 
 Recall that for an end-to-end geodesic arc $\beta$ in $\mathcal{C}_\alpha$ by $\mathcal{D}^m_\alpha(\beta)$ we denote the geodesic freely homotopic to $D^m_\alpha(\beta)$ under the end point fixing homotopy.   
 \begin{lem}\label{endtoend}
 Let $\xi, \eta $ be two end-to-end geodesic arcs such that they have the same end points on $\partial{\mathcal{C}_\alpha}$. Then $\mathcal{D}^m_\alpha(\xi) = \eta$ for some $m \in \mathbb{Z}$. 
 \end{lem}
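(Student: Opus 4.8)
The plan is to pass to the universal cover of the collar, classify end-to-end geodesic arcs between a fixed pair of boundary points by a single integer ``winding number'', and then check that one Dehn twist shifts this integer by a unit.

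First I would set up the cover. Identifying $\mathcal{C}_\alpha$ with $[-w(\alpha),w(\alpha)]\times\mathbb{S}^1$ via Fermi coordinates and writing $\mathbb{S}^1=\mathbb{R}/2\pi\mathbb{Z}$, the universal cover is the strip $\widetilde{\mathcal{C}_\alpha}=[-w(\alpha),w(\alpha)]\times\mathbb{R}$ with covering map $(r,t)\mapsto(r,t\bmod 2\pi)$; after rescaling $t$ by $\ell_\alpha$ the lifted metric $dr^2+\ell_\alpha^2\cosh^2 r\,dt^2$ is exactly the hyperbolic metric in Fermi coordinates, so $\widetilde{\mathcal{C}_\alpha}$ is isometric to the closed $w(\alpha)$-neighborhood of a complete geodesic in $\mathbb{H}^2$, and the deck group is generated by $\sigma\colon(r,t)\mapsto(r,t+2\pi)$. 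Fix endpoints $s_1=(-w(\alpha),\theta_1)\in\partial_1^\alpha$ and $s_2=(w(\alpha),\theta_2)\in\partial_2^\alpha$, a lift $\tilde s_1=(-w(\alpha),\tilde\theta_1)$ of $s_1$, and a lift $\tilde\theta_2$ of $\theta_2$. Any end-to-end arc $\beta$ with endpoints $s_1,s_2$ is the graph of a map $[-w(\alpha),w(\alpha)]\to\mathbb{S}^1$; since the interval is simply connected this map lifts, and the lift $\tilde\beta$ starting at $\tilde s_1$ is again a graph over $[-w(\alpha),w(\alpha)]$, ending at $\sigma^{k(\beta)}(w(\alpha),\tilde\theta_2)=(w(\alpha),\tilde\theta_2+2\pi k(\beta))$ for a well-defined integer $k(\beta)$. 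Because $\widetilde{\mathcal{C}_\alpha}$ is simply connected, $k(\beta)$ is a complete invariant of the endpoint-fixing homotopy class of $\beta$. Moreover, if $\beta$ is an \emph{end-to-end geodesic} arc, then $\tilde\beta$ is a geodesic segment of $\mathbb{H}^2$ (a geodesic of the collar metric is a local geodesic of $S$, hence a geodesic segment of $\mathbb{H}^2$ contained in the strip), and a geodesic segment of $\mathbb{H}^2$ is determined by its two endpoints; therefore $\beta\mapsto k(\beta)$ is \emph{injective} on the set of end-to-end geodesic arcs with endpoints $s_1,s_2$.

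Next I would compute the effect of the Dehn twist on $k$. The homeomorphism $D_\alpha$ fixes $\partial\mathcal{C}_\alpha$ pointwise and preserves the $r$-coordinate, so it carries end-to-end arcs with endpoints $s_1,s_2$ to end-to-end arcs with the same endpoints. Lifting $D_\alpha$ to the map $\widetilde{D_\alpha}$ that is the identity on $\{r\le -T\}$, continuity at $r=-T$ forces $\widetilde{D_\alpha}(r,t)=(r,\,t+\pi-\tfrac{\pi}{T}r-2\pi)$ on $[-T,T]$, hence $\widetilde{D_\alpha}(r,t)=(r,t-2\pi)$ on $\{r\ge T\}$; thus $\widetilde{D_\alpha}$ sends the lift $(w(\alpha),\tilde\theta_2+2\pi k)$ of $s_2$ to $(w(\alpha),\tilde\theta_2+2\pi(k-1))$. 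For an end-to-end geodesic arc $\xi$, the lift of $D_\alpha(\xi)$ from $\tilde s_1$ is $\widetilde{D_\alpha}\circ\tilde\xi$, which ends at the lift of $s_2$ indexed by $k(\xi)-1$; passing to the geodesic representative in the same endpoint-fixing homotopy class gives $k(\mathcal{D}_\alpha(\xi))=k(\xi)-1$, and by induction $k(\mathcal{D}_\alpha^m(\xi))=k(\xi)-m$ for all $m\in\mathbb{Z}$.

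Finally I would conclude: given $\xi,\eta$ as in the statement, both end-to-end geodesic arcs with endpoint set $\{s_1,s_2\}$, set $m=k(\xi)-k(\eta)\in\mathbb{Z}$. Then $\mathcal{D}_\alpha^m(\xi)$ is an end-to-end geodesic arc with endpoints $s_1,s_2$ and $k(\mathcal{D}_\alpha^m(\xi))=k(\xi)-m=k(\eta)$, so by injectivity of $\beta\mapsto k(\beta)$ on such arcs, $\mathcal{D}_\alpha^m(\xi)=\eta$. I expect the only delicate point to be the middle step: verifying that one application of $D_\alpha$ changes the winding integer by exactly one unit of the deck group (and tracking the sign), which is precisely the assertion that $D_\alpha$ is a Dehn twist; the rest is the soft fact that a simply connected cover linearizes endpoint-fixing homotopy classes together with uniqueness of geodesic segments in $\mathbb{H}^2$.
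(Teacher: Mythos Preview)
Your proof is correct and takes a genuinely different route from the paper. The paper argues directly on the collar by intersection reduction: between two consecutive intersection points of $\xi$ and $\eta$ along $\eta$, the two subarcs form a loop freely homotopic to $\alpha$, and one application of $D_\alpha$ or $D_\alpha^{-1}$ undoes that loop, lowering $\iota(\xi,\eta)$ by one; iterating until the arcs are disjoint (hence homotopic rel endpoints, hence equal as geodesics) gives the integer $m$. Your approach instead lifts to the universal cover of the collar and linearizes the problem: endpoint-fixing homotopy classes of end-to-end arcs with prescribed endpoints are indexed by a single integer $k$ (the deck translate hit at the far boundary), uniqueness of geodesic segments in $\mathbb{H}^2$ makes $k$ injective on end-to-end \emph{geodesic} arcs, and an explicit lift of $D_\alpha$ shows it acts on $k$ by $k\mapsto k-1$. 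Your argument is cleaner and makes the bijection between $\mathbb{Z}$ and the relevant homotopy classes explicit, and it avoids having to justify why the bigon loop in the paper's argument is homotopic to $\alpha^{\pm 1}$ rather than a higher power; the paper's approach, on the other hand, stays on the surface and requires no covering-space bookkeeping.
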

\begin{proof}
 It is enough to show that $D^m_\alpha(\xi)$ is homotopic to $\eta$, for some $m \in \mathbb{Z}$, under the end point fixing homotopy. To show this consider any two points of intersection between $\xi$ and $\eta$ that occur consecutively along $\eta$. Consider the piecewise geodesic loop formed by the subarc of $\xi$ and $\eta$ between these two points. This loop is freely homotopic to $\alpha$. Using the definition of the Dehn twists homeomorphism $D_\alpha$ it is not difficult to see that one of $D_\alpha(\xi)$ and $D^{-1}_\alpha(\xi)$ does not have the latter loop, up to isotopy. As a result one of them intersects $\eta$ one less number of times than $\xi$. Repeating this procedure we get $m \in \mathbb{Z}$ such that $D^m_\alpha(\xi)$ is homotopic to $\eta$.  
\end{proof}

\begin{lem}\label{intersectionequal}
  Let $\xi$ and $\eta$ be two end-to-end geodesic arcs in $\mathcal{C}_\alpha$. Then the numbers of intersection between $\xi$ and $\eta$ in the two components of $\mathcal{C}_\alpha \setminus \alpha$ differ by at most two.
\end{lem}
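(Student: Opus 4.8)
The plan is to compute both intersection numbers exactly. Work in the Fermi coordinates $(r,\theta)$ identifying $\mathcal{C}_\alpha$ with $[-w(\alpha),w(\alpha)]\times\mathbb{S}^1$, so that $\alpha=\{0\}\times\mathbb{S}^1$ and the two components of $\mathcal{C}_\alpha\setminus\alpha$ are $\{r<0\}$ and $\{r>0\}$; write $\xi,\eta$ as graphs of $f,g\colon[-w(\alpha),w(\alpha)]\to\mathbb{S}^1$, fix continuous lifts $\tilde f,\tilde g\colon[-w(\alpha),w(\alpha)]\to\mathbb{R}$, and set $h=\tilde f-\tilde g$. Each of $\xi,\eta$ meets $\alpha$ transversally in exactly one point (a graph over $[-w(\alpha),w(\alpha)]$ hits $r=0$ once). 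I will show that the number of intersections of $\xi$ with $\eta$ in $\{r<0\}$ is $\#|\mathbb{Z}\cap I_-|$ and the number in $\{r>0\}$ is $\#|\mathbb{Z}\cap I_+|$, where $I_-$ is the open interval with endpoints $h(-w(\alpha))$ and $h(0)$, and $I_+$ the open interval with endpoints $h(0)$ and $h(w(\alpha))$; and separately that $h(0)=\tfrac12\bigl(h(-w(\alpha))+h(w(\alpha))\bigr)$. Granting these, $I_-$ and $I_+$ have equal length, two bounded open intervals of equal length contain numbers of integers differing by at most one, and the lemma follows (even with the slightly stronger bound $1$ in place of $2$).

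For the intersection formula, restrict to the half-collar $\mathcal A_-=[-w(\alpha),0]\times\mathbb{S}^1$ and pass to its universal cover, which is the half-neighborhood $R_-=\{x\in\mathbb{H}^2:-w(\alpha)\le r(x)\le 0\}$ of the axis $\tilde\alpha$; here $R_-$ is convex, being the intersection of the halfspace $\{r\le 0\}$ with the sublevel set $\{\operatorname{dist}(\cdot,\tilde\alpha)\le w(\alpha)\}$ of a convex function, and the deck group acts by $(r,\theta)\mapsto(r,\theta+1)$. Thus $\xi\cap\mathcal A_-$ lifts to the pairwise disjoint geodesic segments $\operatorname{graph}(\tilde f|_{[-w(\alpha),0]}+m)$, $m\in\mathbb Z$, and $\eta\cap\mathcal A_-$ lifts to $\operatorname{graph}(\tilde g|_{[-w(\alpha),0]})$. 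Two geodesic segments in a convex region meet at most once, and meet exactly when their endpoints interleave on the boundary; reading off the $\theta$-coordinates of the endpoints on $\{r=-w(\alpha)\}$ and on $\{r=0\}$, the $m$-th lift of $\xi$ meets the lift of $\eta$ iff $m$ lies strictly between $h(-w(\alpha))$ and $h(0)$. Since distinct intersection points of $\xi\cap\mathcal A_-$ with $\eta\cap\mathcal A_-$ correspond to distinct values of $m$, the count is $\#|\mathbb{Z}\cap I_-|$; the same argument on the other half-collar gives $\#|\mathbb{Z}\cap I_+|$.

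For the symmetry, let $(0,\theta_\xi)$ be the point $\xi\cap\alpha$ and consider $\sigma\colon(r,\theta)\mapsto(-r,\,2\theta_\xi-\theta)$. The metric $dr^2+\ell_\alpha^2\cosh^2 r\,d\theta^2$ is invariant under $r\mapsto-r$ and under $\theta\mapsto\mathrm{const}-\theta$, so $\sigma$ is an isometric involution of $\mathcal{C}_\alpha$; it is orientation preserving and has $(0,\theta_\xi)$ as an isolated fixed point, whence $d\sigma=-\operatorname{id}$ there. Lifting $\sigma$ to the half-turn of $\mathbb{H}^2$ about a lift of $(0,\theta_\xi)$, it preserves $\tilde\alpha$ and reverses the complete geodesic $\tilde\gamma$ through that point extending $\xi$; since the convex collar-lift $N_{w(\alpha)}(\tilde\alpha)$ meets $\tilde\gamma$ in a single arc, which — both being end-to-end — must be the lift of $\xi$, we get $\sigma(\xi)=\xi$. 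In graph form this reads $\tilde f(r)+\tilde f(-r)=2\tilde f(0)$ for all $r$; the same holds for $\tilde g$ (with $\theta_\eta$), and subtracting at $r=w(\alpha)$ yields $h(0)=\tfrac12\bigl(h(-w(\alpha))+h(w(\alpha))\bigr)$.

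The step I expect to be the main obstacle is the symmetry: unlike the rest it is not a formal manipulation, and it rests on the geometric fact that an end-to-end geodesic arc is carried to itself by the half-turn of the collar fixing its crossing point with $\alpha$ — equivalently, that $\xi$ is precisely the $\{|r|\le w(\alpha)\}$-portion of the complete geodesic extending it. This is exactly where convexity of the collar in $\mathbb{H}^2$ enters, to rule out the extended geodesic re-entering the collar and thereby to identify the lift of the arc $\xi$ with $\tilde\gamma\cap N_{w(\alpha)}(\tilde\alpha)$.
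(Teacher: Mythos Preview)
Your argument is correct and differs genuinely from the paper's route. The paper first treats the special case where all four endpoints of $\xi$ and $\eta$ share the same $\theta$-coordinate $\psi$: there a single isometric involution $R_\psi$ of $\mathcal{C}_\alpha$ swaps the two halves while preserving \emph{both} arcs, forcing the two half-counts to be exactly equal. The general case is then reduced to this one via Lemma~\ref{parallel}, at the cost of the extra $+2$. You instead compute both counts directly in the universal cover as numbers of integers in two open intervals $I_-,I_+$, and show $|I_-|=|I_+|$ by applying a \emph{separate} half-turn $\sigma$ to each arc (about its own crossing point with $\alpha$) to get $\tilde f(r)+\tilde f(-r)=2\tilde f(0)$ and likewise for $\tilde g$, hence the midpoint property of $h$. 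This avoids Lemma~\ref{parallel} and the special-case reduction, and generically yields the sharper bound~$1$; the paper's single-involution picture, by contrast, makes the exact equality in the symmetric case immediate. One small remark: your parenthetical ``even with the slightly stronger bound $1$'' need not hold at the non-generic boundary configurations (e.g.\ when $\xi$ and $\eta$ share an endpoint on $\partial\mathcal{C}_\alpha$, so $h(\pm w(\alpha))\in\mathbb{Z}$), but even then your count gives at most~$2$, which is the stated lemma.
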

\begin{proof}
 We first prove the Lemma with an extra assumption that all four end points of $\xi$ and $\eta$ have the same $\theta$ coordinate equal to $\psi$. By Lemma \ref{endtoend} we know that $\xi$ and $\eta$ are Dehn twists of the $\psi$-radial arc $\eta_\psi$ (see \S1.1) of certain order i.e.
 \begin{equation*}
  \mathcal{D}^m_\alpha(\eta_\psi) = \xi ~~ \textrm{and} ~~ \mathcal{D}^n_\alpha(\eta_\psi) = \eta
 \end{equation*}
 for some $m, n \in \mathbb{Z}$.

 Now recall the Fermi coordinates $(r, \theta)$ on ${\mathcal{C}_\alpha}$. Using these coordinates consider the embedding of ${\mathcal{C}_\alpha}$ in $\mathbb{R}^3$ via the map $(r, \theta) \to (\cos{\theta}, \sin{\theta}, r)$. Recall that in the Fermi coordinates $\alpha$ is identified with $\mathbb{S}^1$. Let $a: \mathbb{S}^1 \to \mathbb{S}^1$ be the antipodal map. Now consider the line $L_\psi$ in $\mathbb{R}^3$ that intersects ${\mathcal{C}_\alpha}$ orthogonally at the points $(\cos{\psi}, \sin{\psi}, 0)$ and $(\cos{a(\psi)}, \sin{a(\psi)}, 0))$. The rotation of $\mathbb{R}^3$ by an angle of $\pi$ with axis $L_\psi$ when restricted to ${\mathcal{C}_\alpha}$ provides an isometry $R_\psi$ of ${\mathcal{C}_\alpha}$. It is easy to check that $R_\psi$ interchanges the two components of ${\mathcal{C}_\alpha} \setminus \alpha$. Moreover, using explicit expressions one can observe that $D^k_\alpha(\eta_\psi)$ are invariant under $R_\psi$. By uniqueness $\mathcal{D}^k_\alpha(\eta_\psi)$ are also invariant under $R_\psi$ for each $k$. It follows that the numbers of intersections between $\mathcal{D}^m_\alpha(\eta_\psi) = \xi$ and $\mathcal{D}^n_\alpha(\eta_\psi) = \eta$ in the two components of $\mathcal{C}_\alpha \setminus \alpha$ are identical.

 To prove the general case we first observe that for any end-to-end geodesic arc $\chi$ whose two end points have different $\theta$ coordinates one can always find another end-to-end geodesic arc $\zeta$ disjoint from $\chi$ whose both end points have the same $\theta$ coordinates. To see this we use Lemma \ref{endtoend} to express $\chi$ as the Dehn twist of an almost radial geodesic arc $\chi_0$ i.e. $\chi = \mathcal{D}^m_\alpha(\chi_0)$ for some integer $m$. Now by definition $\chi_0$ does not intersect at least one radial arc say $\eta_\phi$. Then our $\zeta$ is simply $\mathcal{D}^m_\alpha(\eta_\phi)$. The general case now follows from Lemma \ref{parallel}.
\end{proof}
\subsubsection{Angle set and intersections}
The fact that the two halves of $\mathcal{C}_\alpha \setminus \alpha$ contains approximately the same number of intersections between any two end-to-end geodesic arcs is not yet {\it visible} in their angle set. A part of our next result would make it so. Fix one set of Fermi coordinates on $\mathcal{C}_\alpha \setminus \alpha$. With respect to these coordinates consider the orientation of $\alpha$ from \S 1.3.1.

Let $\gamma$ be an end-to-end geodesic arc in $\mathcal{C}_\alpha$ with $\iota(\alpha, \gamma) = 1$ and $x$ be the point of intersection. Let $\pi: \mathbb{H}^2 \to S$ be a fixed universal covering such that $\pi(0) = x$ and denote the corresponding lifts of $\alpha$ and $\gamma$ by the same alphabet. Observe that the above orientation of $\alpha$ corresponds to the orientation of the lift of $\alpha$ that decreases the height. Consider Picture \ref{fig:twist1} where the left column corresponds to situations in $\mathcal{C}_\alpha \subset S$ and the right column corresponds to one set of lifts of the involved geodesics to $\mathbb{H}^2$. 

\begin{thm}\label{asympt}
   Let $\Theta_S(\gamma, \alpha) = (\phi)$. Let $\xi$ be another end-to-end geodesic arc with $\Theta_S(\gamma, \xi) = (\theta_1, \theta_2, ..., \theta_m)$. Then: \\*
  $(1)$ either
  \begin{equation*}
  \theta_1 > \theta_2 > ...> \phi < ...< \theta_{m-1} < \theta_m
  \end{equation*}
  or
  \begin{equation*}
  \theta_1 < \theta_2 < ...< \phi > ...> \theta_{m-1} > \theta_m
  \end{equation*}
  where the angles before and after $\phi$ correspond to the intersections between $\gamma$ and $\xi$ that occur in the two different halves of $\mathcal{C}_\alpha \setminus \alpha$, \\*
  $(2)$ for any $\epsilon >0$ the cardinality $\#|\{ j: \theta_j \in (\phi - \epsilon, \phi + \epsilon) \}| \approx m$.
  \end{thm}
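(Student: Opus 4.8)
The natural approach is to work in the universal cover $\mathbb{H}^2$ and exploit the fact, established in \S1.3.1 and recalled just before the statement, that the chosen orientation of $\alpha$ corresponds to the orientation of its lift that decreases height. I fix the universal covering $\pi:\mathbb{H}^2\to S$ with $\pi(0)=x$, lift $\alpha$ and $\gamma$ through $0$, and let $\tilde\alpha$ be the lift of $\alpha$ through $0$. Since $\gamma$ is an end-to-end geodesic arc meeting $\alpha$ once, its full preimage in the lift of $\mathcal{C}_\alpha$ consists of disjoint translates $\gamma_k$ of $\gamma$ under the powers of the hyperbolic element $g$ generating $\pi_1(\mathcal{C}_\alpha)=\langle g\rangle$, each meeting a translate $g^k\tilde\alpha$ once. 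Similarly $\xi$ lifts to a bi-infinite geodesic arc $\tilde\xi$ in the lifted collar (strictly, to a chain of translates, but after replacing $\xi$ by the end-to-end geodesic arc it represents we may track a single lift across the translates of $\tilde\alpha$). The intersections of $\gamma$ with $\xi$ in $\mathcal{C}_\alpha$ then correspond bijectively to intersections of the $\gamma_k$ with $\tilde\xi$, ordered along $\tilde\xi$; those with $k$ on one side of $\tilde\alpha$ lie in one half of $\mathcal{C}_\alpha\setminus\alpha$, those on the other side in the other half.

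For part $(1)$, the monotonicity, I would compare consecutive intersection points of $\tilde\xi$ with two adjacent translates $g^k\tilde\alpha$ and $g^{k+1}\tilde\alpha$. These two translates are disjoint geodesics in $\mathbb{H}^2$ ultraparallel across a common perpendicular segment of $\tilde\xi$ (or rather, $\tilde\xi$ crosses both). The angle that $\tilde\xi$ makes with $g^k\tilde\alpha$ versus with $g^{k+1}\tilde\alpha$ is governed by a monotonicity lemma for a geodesic crossing a pencil of disjoint geodesics all ``parallel'' in the collar: as one moves to successive translates away from $\tilde\gamma=\gamma_0$ in a fixed direction, the crossing angle decreases monotonically until one passes $\tilde\alpha$ (where the comparison geodesic is $\tilde\gamma$ itself, giving the angle $\phi$), and increases monotonically on the other side. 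This is essentially a convexity statement about the function ``angle of intersection with $g^k\tilde\alpha$'' in $k$, and it can be made precise using the explicit hyperbolic trigonometry of the quadrilateral cut out by $\tilde\xi$, $g^k\tilde\alpha$, $g^{k+1}\tilde\alpha$ and a segment of $\alpha$ (or using the half-turn isometry $R_\psi$ from the proof of Lemma~\ref{intersectionequal} to reduce to a symmetric configuration). The two cases in the statement correspond to the two possible orientations of $\tilde\xi$ relative to the height-decreasing direction on $\tilde\alpha$, i.e.\ to the sign of the Dehn twist power relating $\xi$ to a radial arc, exactly as in Remark~\ref{almstgeod1} and \S1.3.1.

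For part $(2)$, the counting of angles near $\phi$, I would combine two inputs: first, the estimate $\iota(\gamma,\xi)=m$ together with the fact (Lemma~\ref{intersectionequal}) that the two halves of $\mathcal{C}_\alpha\setminus\alpha$ carry numbers of intersections differing by at most two; second, the observation that, by the monotonicity of part $(1)$, the angles closest to $\phi$ from each side are precisely those coming from the translates $g^k\tilde\alpha$ with $k$ small, and only boundedly many of them can fail to lie in $(\phi-\epsilon,\phi+\epsilon)$. Quantitatively: the angle at the intersection with $g^k\tilde\alpha$ converges to a limiting angle as $k\to\infty$ (the angle the end of $\xi$, viewed as spiraling data, makes at the boundary of the collar — or $0$ or $\pi$ in the radial-limit case), so for fixed $\epsilon$ all but finitely many $k$ on each side give angles within $\epsilon$ of $\phi$; here one uses that near the geodesic $\alpha$ the collar is ``thin'' in the angular sense, so crossing angles of nearly-parallel translates are forced close together. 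Since $m\to\infty$ and the exceptional count is bounded independently of $m$ (depending only on $\epsilon$ and the geometry of $\mathcal{C}_\alpha$), we get $\#|\{j:\theta_j\in(\phi-\epsilon,\phi+\epsilon)\}|\approx m$ in the sense of the definition of $\approx$.

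The main obstacle I anticipate is making the monotonicity in part $(1)$ genuinely clean rather than a case-ridden trigonometric computation: one must pin down, once and for all, the relation between the orientation conventions (the height-decreasing lift of $\tilde\alpha$, the sign of the Dehn-twist power, and which half of the collar an intersection lies in) so that the ``either/or'' dichotomy is unambiguous and the inequalities point the right way on each side of $\phi$. I expect the half-turn isometry $R_\psi$ from the proof of Lemma~\ref{intersectionequal} to be the right device: it lets one reduce any $\xi$ to a Dehn twist of a radial arc, for which the angle sequence is explicitly computable, and then transport the monotonicity back. The counting in part $(2)$ is then comparatively routine given part $(1)$ and Lemma~\ref{intersectionequal}.
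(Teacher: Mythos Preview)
Your plan for part~$(1)$ is broadly in the spirit of the paper, but there is a recurring slip that suggests the picture is not quite set up correctly: you repeatedly write ``translates $g^k\tilde\alpha$'', but $\tilde\alpha$ is the axis of $g$, so $g^k\tilde\alpha=\tilde\alpha$ for all $k$. What you presumably mean throughout is $\gamma_k=g^k\tilde\gamma$. Beyond the typo, the paper's mechanism is more specific than ``convexity'' or ``hyperbolic trigonometry of a quadrilateral''. For each intersection point $x_i$ in one half of $\mathcal{C}_\alpha\setminus\alpha$, the paper builds (via an explicit homotopy between the sub-arc $\gamma_i$ of $\gamma$ and the sub-arc $\xi_i$ of $\xi$) a lifted geodesic \emph{triangle} $T_i$ with one side on $\tilde\gamma$, one on $\tilde\alpha$, and one on a lift of $\xi_i$; the orientation conventions from \S1.3.1 guarantee $T_i\subset T_{i+1}$, and the monotonicity then drops out of the area formula~(1.1). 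Your outline could likely be completed, but the clean device is nested triangles plus Gauss--Bonnet, not a quadrilateral computation.

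For part~$(2)$ there is a genuine gap. You treat the uniform bound on the number of ``bad'' angles as routine once $(1)$ is known, but that uniformity (in $\xi$, as $m\to\infty$) is precisely the content of $(2)$, and your argument for it is internally inconsistent: you say the angle at the $k$-th crossing tends, as $k\to\infty$, to the angle at the \emph{boundary} of the collar, and then immediately conclude that all but finitely many are within $\epsilon$ of $\phi$; these two assertions do not fit together. The paper does not derive $(2)$ from $(1)$ at all. Instead it reduces, via Lemma~3.5, to arcs of the form $\mathcal{D}_\alpha^k(\eta)$ with $\eta$ almost radial, passes to the limiting lamination $L$ (whose only minimal sub-lamination is $\alpha$), observes that $\Theta_S(\gamma,L)$ therefore has the single accumulation point $\phi$, and then uses compactness of the two-parameter family of such laminations (parametrised by the endpoints on $\partial\mathcal{C}_\alpha$, in each spiraling direction) to upgrade the finite count to a bound $\mathcal{N}_\alpha(\epsilon)$ independent of $L$; a short contradiction argument with $\xi_j\to\xi_0$ finishes. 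If you want to avoid laminations you would need a direct estimate showing that the portion of $\tilde\xi$ lying in the slab $\{d_0\le r\le w(\alpha)\}$ meets only boundedly many translates $\gamma_k$, uniformly in $\xi$; this is provable but is not the one-line consequence of $(1)$ that your plan suggests.
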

\begin{figure}[H]
	\includegraphics[scale = .35]{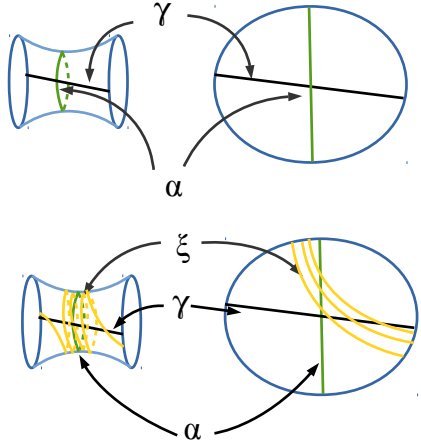}
	\caption{Twists and their lifts}
	\label{fig:twist1}
\end{figure}
 
  \begin{proof}
  We begin by making the observation that $\theta_i$ does not depend on the orientation of $\gamma$ or $\xi$. Since the result we want to prove is qualitative it is enough to consider the angles corresponding to the points of intersection between $\gamma$ and $\xi$ lying in one of the halves of $\mathcal{C}_\alpha \setminus \alpha$. Without loss of generality let us consider the right half $\{(r, \theta): 0 \le r \le w(\alpha)\}$ of $\mathcal{C}_\alpha \setminus \alpha$.

  By Lemma \ref{endtoend} we know that $\gamma$ and $\xi$ are Dehn twists of certain order of almost radial arcs. Using Remark \ref{almstgeod1} we can further say that there are almost radial end-to-end geodesic arcs $\gamma_0, \xi_0$ such that for some $m_1, m_2 \in \mathbb{Z}$
  \begin{equation}
  \gamma = \mathcal{D}^{m_1}_\alpha(\gamma_0), ~~ \xi = \mathcal{D}^{m_2}_\alpha(\xi_0) 	
  \end{equation}
  with $\gamma_0$ and $\xi_0$ are either identical or disjoint. The first monotonicity appears for $m_2 - m_1 >0$ and the second appears for $m_2 - m_1 <0$. Observe that $\gamma$ and $\mathcal{D}^{m_1}_\alpha(\xi_0)$ are either identical or disjoint. So the sign of $m_2 -m_1$, in some sense, measures the {\it amount} of Dehn twists applied to $\xi$ with respect to $\gamma$. Let us assume that $m_2 - m_1 >0$, the other case can be dealt with similarly.

  Recall that $x$ is the point of intersection between $\gamma$ and $\alpha$. Let $y$ be the point of intersection between $\xi$ and $\alpha$. Now consider the right half of $\mathcal{C}_\alpha \setminus \alpha$ and consider the points of intersection $x_1, x_2, ...$ between $\gamma$ and $\xi$ arranged in the ascending order of their distances from $x$ measured along $\gamma$. For each $x_i$ consider the subarc $\gamma_i$ of $\gamma$ between $x_i$ and $x$ and the subarc $\xi_i$ of $\xi$ between $x_i$ and $y$. Let $\gamma_i: [0, 1] \to \mathcal{C}_\alpha$ and $\xi_i: [0, 1] \to \mathcal{C}_\alpha$ be the parametrization of $\gamma_i$ and $\xi_i$ respectively such that $\gamma_i(0) = x_i = \xi_i(0)$ and $\gamma_i(1) = x, \xi_i(1) = y$. Using the definition of Dehn twist homeomorphism and length minimizing homotopy we may observe that there is a smooth homotopy $H_i:[0, 1] \times [0, 1] \to \mathcal{C}_\alpha$ between $\gamma_i$ and $\xi_i$ that has the following properties: $H_i(s, 0) = \gamma(s)$, $H_i(s, 1) = \xi_i(s)$, $H_i(0, t) = x_i$ and $ H_i(1, .)$ maps $[0, 1]$ to $\mathbb{S}^1 \simeq \alpha.$ Moreover the last map is orientation reversing with respect to the orientation of $\mathbb{S}^1 \simeq \alpha$. Lifting this homotopy to $\mathbb{H}^2$ we obtain lifts of $\xi_i$, $\gamma$ and $\alpha$ that forms a geodesic triangle $T_i$. Since $H_i(1, .): [0, 1] \to \mathbb{S}^1 \simeq \alpha$ is orientation reversing using the orientation of our fixed lift of $\alpha$ we conclude that the lift of $H_i(1, .)$ increases height. Making proper choices of these lifts now one has $T_i \subset T_{i+1}$. Hence $(1)$ follows from the area comparison of these two triangles via \eqref{areapolygon}.

  For the second part we need to consider end-to-end geodesic arcs $\xi$ in $\mathcal{C}_\alpha$ that intersects $\gamma$ large number of times. Using Lemma \ref{endtoend} we observe that it suffices to consider end-to-end geodesic arcs of the form $(\mathcal{D}^k_\alpha(\eta))$ where $\eta$ is an almost radial arc in $\mathcal{C}_\alpha$. We shall first study the geodesic laminations that are obtained as limits of end-to-end geodesic arcs of this last form. Let $L$ be one such geodesic lamination and consider the angle set $\Theta_S(\gamma, L) = (\phi_1, \phi_2, ....)$. The structure of $L$ is easy to describe. $L$ consists of three leaves one of which is $\alpha$. Rest of the two leaves spirals around $\alpha$ one in each component of $\mathcal{C}_\alpha \setminus \alpha$. Since $\alpha$ is the only minimal component of $L$ it follows that $\Theta_S(\gamma, L)$ has exactly one point of accumulation that corresponds to the point of intersection between $\gamma$ and $\alpha$ i.e. $\phi$. Hence we have
  \begin{equation*}
   |\{\phi_i: \phi_i \notin (\phi - \epsilon, \phi +\epsilon)\}| \le N_L(\epsilon)
  \end{equation*}
  for some integer $N_L(\epsilon)$ that a priori depends on $L$. To understand this dependence observe first that $L$ is determined by two things: $(i)$ the direction of spiraling of the two isolated leaves around $\alpha$ (see \S 1.4.2) and $(ii)$ the two end points of $L$ on the two components of $\partial{\mathcal{C}_\alpha}$. Since there are two possible directions in which the two isolated leaves of $L$ may spiral around $\alpha$ it suffices to take care of these two cases separately. 
  
  Let $\mathcal{L}^\pm$ be the collection of all those lamination (as above) whose isolated leaves respectively have $\pm$ direction of spiraling around $\alpha$. Hence any two laminations in $\mathcal{L}^+$ (or in $\mathcal{L}^-$) differ only by their end points on $\partial{\mathcal{C}_\alpha}$. Observe that by rotating each leaf appropriately (to match these two end points) any lamination in $\mathcal{L}^+$ (or in $\mathcal{L}^-$) can be obtained from any other. It is not difficult to observe that the effect of these rotations on $N_L(\epsilon)$ is continuous with respect to the angles of rotation. Hence $N_L(\epsilon)$ can be made independent of the lamination. Let us denote this uniform bound by $\mathcal{N}_\alpha(\epsilon).$ Finally for any lamination $L$ as above and $\Theta_S(\gamma, L) = (\phi_1, \phi_2, ....)$ we have
  \begin{equation}\label{count}
  |\{\phi_i: \phi_i \notin (\phi - \epsilon, \phi +\epsilon)\}| \le \mathcal{N}_\alpha(\epsilon).
  \end{equation}
  Now we are ready to prove the second part of our theorem. We argue by contradiction. So we have an $\epsilon >0$ for which we have almost radial arcs $\xi_j$ and a sequence of integers $(n_j)$ with $\Theta_S(\gamma, \mathcal{D}_\alpha^{n_j}(\xi_j)) = (\theta_1^j, \cdots, \theta_{m_j}^j)$ such that 
  \begin{equation}\label{lbound}
  m_j - \#|\{ i: \theta_i^j \in (\phi - \epsilon, \phi + \epsilon) \}| \to \infty.	
  \end{equation}
  Now extract a subsequence of $\xi_j$ that converges to an end-to-end geodesic arc $\xi_0$. Up to extracting subsequences, the limits of $\mathcal{D}_\alpha^{n_j}(\xi_j)$ and $\mathcal{D}_\alpha^{n_j}(\xi_0)$ are the same. Let $L_0$ be this limit and let $\Theta_S(\gamma, L_0) = (\phi_1^0, \phi_2^0, \cdots)$. The convergence $\mathcal{D}_\alpha^{n_j}(\xi_j) \to L_0$ via \eqref{lbound} implies that $$|\{\phi_i^0: \phi_i^0 \notin (\phi - \epsilon, \phi +\epsilon)\}| = \infty$$ which is contradictory to \eqref{count} via the convergence $\mathcal{D}_\alpha^{n_j}(\xi_0) \to L_0$ .  
  \end{proof}

  \subsubsection{The general case}
  Now let $\gamma$ and $\alpha_1, \cdots, \alpha_k$ are as in \S 3.2. Let $\eta$ be a simple closed geodesic which we would twist along different $\alpha_i$. Assume that $\eta$ intersects each $\alpha_i$. Recall that $T_{n_1, ..., n_k}(\eta)$ denotes the geodesic in the free homotopy class of $D_{\alpha_1}^{n_1} \circ \cdots \circ D_{\alpha_k}^{n_k}(\eta)$. By Remark \ref{eachintersect} and Lemma \ref{parallel} we know that for components $\gamma_j$ of $\gamma|_{\mathcal{C}_{\alpha_i}}$ and $\eta_j$ of $T_{n_1, \cdots, n_k}(\eta)|_{\mathcal{C}_{\alpha_i}}$ the number of intersections $\iota(\gamma_j, \eta_j) \approx |n_i|.$
 
 Now we divide $\gamma$ into different pieces $\gamma = \cup_{j=1}^l \gamma_j$ such that a $\gamma_j$ is contained in either one of the ${\mathcal{C}_{\alpha_i}}$ or in the complement of all these collars. Assume that $\gamma_j$ and $\gamma_{j+1}$ occur consecutively along $\gamma$. So
  \begin{equation*}
  	\Theta_S(\gamma, {T_{n_1, \cdots, n_k}}(\eta)) = (\Theta_S(\gamma_1, {T_{n_1, \cdots, n_k}}(\eta)), \cdots, \Theta_S(\gamma_l, {T_{n_1, \cdots, n_k}}(\eta))).
  \end{equation*}
  Now let $\Phi(\gamma, {T_{n_1, \cdots, n_k}}(\eta)) = \{\psi_1, \cdots, \psi_p\}$ where $\psi_i$s are distinct. Let $\epsilon_0$ be the minimum distance between any two distinct $\psi_i, \psi_j$. For each $i =1, 2, ..., k$ let $I_i$ be the collection of $j$ for which $\gamma_j \subset {\mathcal{C}_{\alpha_i}}$.
  \begin{thm}\label{generalangle}
  Assume that $n_i \to \infty$ for $i =  1, 2, .. k$. For $j \in I_i$ let $\phi_j$ be the angle of intersection between $\gamma_j$ and $\alpha_i$. 
  For any $\epsilon > 0$ let $\mathcal{P}_{\phi_j}^\epsilon(\gamma, T_{n_1, ..., n_k}(\eta))$ be the ordered subset of $\Theta_S(\gamma, {T_{n_1, ..., n_k}}(\eta))$ consisting of angles in $\Theta_S(\gamma_j, T_{n_1, ..., n_k}(\eta))$
  with magnitude in $(\phi_j-\epsilon, \phi_j + \epsilon)$. Then for any $\epsilon < \epsilon_0$ one has
  \begin{equation}\label{cardin2}
  \#|\mathcal{P}_{\phi_j}^\epsilon(\gamma, T_{n_1, \cdots, n_k}(\eta))| \approx \iota(\alpha_i, \eta){n_i}.
  \end{equation}
  On the other hand, for $j \notin \cup_{i=1}^k I_i$ the cardinality of $\Theta_S(\gamma_j, {T_{n_1, ..., n_k}}(\eta))$ is uniformly bounded independent of $(n_1, \ldots, n_k)$.  	
  \end{thm}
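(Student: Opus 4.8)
The plan is to reduce the statement to the single-collar case already handled in Theorem \ref{asympt}, using the disjointness of the collars $\mathcal{C}_{\alpha_i}$ and the decomposition $\gamma = \cup_{j=1}^l \gamma_j$. First I would fix $i$ and a piece $\gamma_j$ with $j \in I_i$, so that $\gamma_j$ is an end-to-end geodesic arc in $\mathcal{C}_{\alpha_i}$ meeting $\alpha_i$ once at angle $\phi_j$. The key observation is that the angles in $\Theta_S(\gamma_j, T_{n_1,\ldots,n_k}(\eta))$ only depend on how $T_{n_1,\ldots,n_k}(\eta)$ looks \emph{inside} $\mathcal{C}_{\alpha_i}$, and inside $\mathcal{C}_{\alpha_i}$ the geodesic $T_{n_1,\ldots,n_k}(\eta)$ is a disjoint union of end-to-end geodesic arcs. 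Since Dehn twists along $\alpha_{i'}$ for $i' \neq i$ are supported away from $\mathcal{C}_{\alpha_i}$, each such arc is, up to the end-point-fixing homotopy, of the form $\mathcal{D}_{\alpha_i}^{m}(\zeta)$ for an almost radial arc $\zeta$, where the relevant twisting power $m$ is comparable to $n_i$ (this is exactly Remark \ref{eachintersect} / Remark \ref{almstgeod1}, combined with Lemma \ref{parallel} to control the finitely many arcs against one representative). Applying Theorem \ref{asympt} to $\gamma_j$ against each such end-to-end arc of $T_{n_1,\ldots,n_k}(\eta)|_{\mathcal{C}_{\alpha_i}}$, part $(2)$ gives that all but $\mathcal{N}_{\alpha_i}(\epsilon)$ of the corresponding intersection angles lie in $(\phi_j - \epsilon, \phi_j + \epsilon)$, for each $\epsilon < \epsilon_0$; here the bound $\epsilon_0$ on the size of $\epsilon$ guarantees that the window $(\phi_j - \epsilon, \phi_j + \epsilon)$ does not swallow angles coming from a different piece $\gamma_{j'}$, so $\mathcal{P}_{\phi_j}^\epsilon$ genuinely isolates the contribution of $\gamma_j$.

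Next I would count cardinalities. The total number of intersection points of $T_{n_1,\ldots,n_k}(\eta)$ with $\gamma_j$ equals $\iota(\gamma_j|_{\mathcal{C}_{\alpha_i}}, T_{n_1,\ldots,n_k}(\eta)|_{\mathcal{C}_{\alpha_i}})$, which by Remark \ref{eachintersect} is $\approx |n_i|\,\iota(\beta,\alpha_i)\iota(\gamma,\alpha_i) \approx \iota(\alpha_i,\eta) n_i$ since $\iota(\gamma_j,\alpha_i)=1$ and we are summing a bounded number of pieces $\gamma_j$ of $\gamma|_{\mathcal{C}_{\alpha_i}}$ — more carefully, one works with a single piece $\gamma_j$, for which $\iota(\gamma_j, T_{n_1,\ldots,n_k}(\eta)) \approx \iota(\alpha_i,\eta) n_i$ still holds by the same argument as in the proof of Proposition \ref{length}, since each end-to-end arc of $T_{n_1,\ldots,n_k}(\eta)|_{\mathcal{C}_{\alpha_i}}$ crosses $\gamma_j$ a number of times comparable to $n_i$. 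Subtracting the uniformly bounded number $\mathcal{N}_{\alpha_i}(\epsilon)$ of ``outliers'' from this total leaves $\#|\mathcal{P}_{\phi_j}^\epsilon| \approx \iota(\alpha_i,\eta) n_i$, which is \eqref{cardin2}.

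For the last sentence, the pieces $\gamma_j$ with $j \notin \cup_i I_i$ lie in the complement of all the collars $\mathcal{C}_{\alpha_1},\ldots,\mathcal{C}_{\alpha_k}$, a compact region on which the Dehn twist homeomorphisms act trivially up to isotopy. Hence $T_{n_1,\ldots,n_k}(\eta)$ restricted to this region is freely homotopic to a fixed collection of arcs (the arcs of $\eta$ together with whatever arcs are forced by the twisting to pass through, but these all enter and leave through the collar boundaries in one of finitely many homotopy classes of arcs across the complementary region). A standard bigon/rectangle area argument via \eqref{areapolygon}, exactly as in Lemma \ref{Ypiece} and Lemma \ref{parallel}, bounds the number of intersections of $\gamma_j$ with any such arc, and the number of such arcs crossing the complementary region is itself uniformly bounded (each is pinned down by its pair of endpoints on the collar boundaries, of which there are boundedly many relevant homotopy classes). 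Therefore $\#|\Theta_S(\gamma_j, T_{n_1,\ldots,n_k}(\eta))|$ is bounded independently of $(n_1,\ldots,n_k)$.

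The main obstacle I expect is the bookkeeping in the second step: asserting that the twisting power seen by \emph{each} end-to-end arc component of $T_{n_1,\ldots,n_k}(\eta)|_{\mathcal{C}_{\alpha_i}}$ is genuinely comparable to $n_i$ (and not, say, bounded for a positive proportion of the components, which would spoil the lower bound in \eqref{cardin2}). This requires combining the lower estimate in Proposition \ref{intersection}(2) with Lemma \ref{parallel} to compare all the components against a single almost radial representative, and then invoking Theorem \ref{asympt}(2) uniformly over that bounded family — essentially the same contradiction-and-extract-a-convergent-subsequence argument used at the end of the proof of Theorem \ref{asympt}, now carried out inside each collar simultaneously.
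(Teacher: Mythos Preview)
Your first half is exactly the paper's approach: the paper's proof of the asymptotic \eqref{cardin2} is the one-line ``follows from Theorem \ref{asympt}'', and you have correctly unpacked what that means --- restrict to the collar $\mathcal{C}_{\alpha_i}$, observe that each end-to-end component of $T_{n_1,\ldots,n_k}(\eta)|_{\mathcal{C}_{\alpha_i}}$ is a Dehn twist of an almost radial arc of order $\approx n_i$ (via Lemma \ref{endtoend}, Remark \ref{eachintersect}, Lemma \ref{parallel}), and apply Theorem \ref{asympt}(2) to each component.

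For the second half you diverge from the paper, and there is a gap. The paper does \emph{not} argue directly via bigons and rectangles in the complementary region; it invokes Lemma \ref{uniformintersection} in the Appendix, whose proof is a compactness argument in $\mathcal{GL}(S')$: one assumes the intersection count in $S\setminus\cup_i\mathcal{C}_{\alpha_i}$ is unbounded, extracts a convergent subsequence $T_{p_{1,j},\ldots,p_{k,j}}(\eta_j)\to L_0$, identifies the minimal components of $L_0$ as $\alpha_1,\ldots,\alpha_k$ together with those of $\eta$, and reaches a contradiction since such an $L_0$ meets $\gamma|_{S\setminus\cup\mathcal{C}_{\alpha_i}}$ only finitely often. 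Your proposed substitute --- that the arcs of $T_{n_1,\ldots,n_k}(\eta)$ in the complementary region fall into ``finitely many homotopy classes'' pinned down by their endpoint pairs --- is exactly the statement that needs proof. The Dehn twist homeomorphisms are indeed the identity on $S\setminus\cup_i\mathcal{C}_{\alpha_i}$, but $T_{n_1,\ldots,n_k}(\eta)$ is the \emph{geodesic} representative, obtained by a free homotopy on all of $S$ that need not respect the collar decomposition; there is no a priori reason its arcs in the complement sit in the same proper isotopy classes as those of $\eta$. In fact the cleanest way to see that they eventually do is precisely the lamination-limit argument the paper uses. So either cite Lemma \ref{uniformintersection}, or replace your bigon sketch by the compactness argument; the area formula alone will not close this.
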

  \begin{proof}
  	The first part follows from Theorem \ref{asympt} and the second part follows from Lemma \ref{uniformintersection} in the Appendix.
  \end{proof}
  \begin{rems}\label{anglepatch}
  $(i)$ As in \S 1.4.2 consider a sequence $(n_{1, i}, n_{2, i}, ..., n_{k, i})$ such that ${T_{n_{1, i}, ..., n_{k, i}}}(\eta)$ converges to the geodesic lamination $L_{\sign(1)\alpha_1, ..., \sign(k)\alpha_k}(\eta)$ where $\sign(j)$ denotes the limiting sign of the sequence $(n_i)$. Observe that $\Theta_S(\gamma, \cup_{i=1}^k \alpha_i)$ is recognizable from the collection $(\Theta_S(\gamma, T_{n_{1, j}, ..., n_{k, j}}(\eta)))_j$. 

  $(ii)$ Fix $\gamma$, $\alpha_1, \cdots, \alpha_k$ and $\epsilon < \epsilon_0$ and consider the asymptotic in \eqref{cardin2}. A priori it depends on $\eta$. This dependence is uniform for $\eta \in \mathcal{N}(m_1, \cdots, m_k, l)$ (see Lemma \ref{uniformintersection} in the Appendix). To see this, by Theorem \ref{asympt}(2), it suffices to observe that all but finitely many points of intersections between $\gamma$ and ${T_{n_{1, i}, ..., n_{k, i}}}(\eta)$, in a uniform way, stays inside $\cup_{i=1}^k \mathcal{C}_{\alpha_i}$. This is the statement of Lemma \ref{uniformintersection} proved in the Appendix.
  \end{rems}
  We end this section with a description of $\Theta_S(\gamma, L)$ where $\gamma$ is an end-to-end arc in $\mathcal{C}_\alpha$ and $L$ is a geodesic lamination in $\mathcal{C}_\alpha$ that has exactly two leaves one of which is $\alpha$ and the other, $\ell$, starts at a point on $\partial {\mathcal{C}_\alpha}$ and spirals around $\alpha$ staying entirely in one of the components of ${\mathcal{C}_\alpha} \setminus \alpha$.  
  \begin{lem}\label{anglelamination}
  Let $\Theta_S(\gamma, \alpha) = (\phi)$ and $\Theta_S(\gamma, L) = (\theta_1, \cdots, \theta_n, \cdots)$. Then the sequence $(\theta_i)$ is strictly monotone and converges to $\phi$.
  \end{lem}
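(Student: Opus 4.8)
The plan is to realize $\Theta_S(\gamma, L)$ as a limit of the angle sets $\Theta_S(\gamma, \mathcal{D}^k_\alpha(\eta_0))$ for a suitable almost radial arc $\eta_0$ and then invoke the monotonicity already established in Theorem \ref{asympt}. First I would set up coordinates: since $\ell$ stays in one component of $\mathcal{C}_\alpha \setminus \alpha$ and spirals around $\alpha$, its lift to $\mathbb{H}^2$ shares an endpoint on $\partial\mathbb{H}^2$ with a lift of $\alpha$. Pick the almost radial arc $\eta_0$ in $\mathcal{C}_\alpha$ whose endpoint on the relevant component of $\partial\mathcal{C}_\alpha$ agrees with that of $\ell$; then, by the discussion of \S1.4.2 and Remark \ref{almstgeod1}, the end-to-end geodesic arcs $\mathcal{D}^k_\alpha(\eta_0)$ converge in the Chabauty topology (as $k \to +\infty$ or $k \to -\infty$, depending on the spiraling direction of $\ell$) to the lamination $L$. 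Consequently $\Theta_S(\gamma, \mathcal{D}^k_\alpha(\eta_0)) \to \Theta_S(\gamma, L)$ in the appropriate sense on the relevant half of $\mathcal{C}_\alpha \setminus \alpha$.

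Next I would transfer monotonicity across the limit. For each fixed $k$, Theorem \ref{asympt}(1) tells us that the subsequence of angles of $\Theta_S(\gamma, \mathcal{D}^k_\alpha(\eta_0))$ coming from the single half of $\mathcal{C}_\alpha \setminus \alpha$ containing $\ell$ is \emph{strictly monotone}, and moreover monotone in a direction determined only by the sign of the twist (hence by the spiraling direction of $\ell$, which is the same for all large $|k|$). Since $\Theta_S(\gamma, \mathcal{D}^k_\alpha(\eta_0))$ is, for $k$ in the relevant range, an initial segment of the sequence $\Theta_S(\gamma, L)$ up to the finitely many intersections that escape to the boundary — or, more carefully, since each finite sub-collection of intersection points of $\gamma$ with $L$ is approximated by the corresponding intersection points of $\gamma$ with $\mathcal{D}^k_\alpha(\eta_0)$ for $k$ large — strict monotonicity of the truncations passes to a (weak) monotonicity of $(\theta_i)$ in the limit. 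To upgrade weak monotonicity to strict monotonicity, I would argue directly with the geodesic triangles $T_i$ of the proof of Theorem \ref{asympt}: lifting $\gamma$, $\ell$ and $\alpha$ to $\mathbb{H}^2$, consecutive intersection points $x_i, x_{i+1}$ of $\gamma$ with $\ell$ cut off nested triangles $T_i \subsetneq T_{i+1}$ with a common vertex at infinity (the shared endpoint of the lifts of $\ell$ and $\alpha$), and the area formula \eqref{areapolygon} forces the angle at $x_{i+1}$ to be strictly less (resp. greater) than that at $x_i$; strictness holds because the inclusion $T_i \subsetneq T_{i+1}$ is proper.

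Finally, convergence $\theta_i \to \phi$: the intersection points $x_i$ of $\gamma$ with $\ell$, ordered by distance along $\gamma$ from the point $\gamma \cap \alpha$, accumulate only on $\alpha$ — indeed $\ell$ spirals onto $\alpha$, so the $x_i$ converge to the point $x = \gamma \cap \alpha$, and the tangent directions of $\ell$ at $x_i$ converge to the tangent direction of $\alpha$ at $x$. Hence the intersection angle $\theta_i$ at $x_i$ converges to the intersection angle of $\gamma$ with $\alpha$ at $x$, which is $\phi$. Alternatively this follows from Theorem \ref{asympt}(2): for every $\varepsilon > 0$ only finitely many $\theta_i$ lie outside $(\phi - \varepsilon, \phi + \varepsilon)$, and since $(\theta_i)$ is monotone it must converge, and the limit can only be $\phi$.

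I expect the main obstacle to be the first step — carefully justifying that $\mathcal{D}^k_\alpha(\eta_0) \to L$ in the Chabauty topology and that this convergence is compatible with the ordering of intersection points (so that monotonicity of the truncated angle sequences really does descend to the limit), since the number of intersections blows up and one must control which finite sub-collections converge. Once that bookkeeping is in place, the geometric heart — the nested-triangle area argument — is exactly the one already carried out for Theorem \ref{asympt}, so it can be quoted almost verbatim.
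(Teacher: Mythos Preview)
Your proposal is correct, and in fact the argument the paper gives is already contained in your write-up as a sub-step. The paper does \emph{not} go through the limit $\mathcal{D}^k_\alpha(\eta_0)\to L$; it proceeds directly by lifting $\alpha$, $\gamma$ and the spiraling leaf $\ell$ to $\mathbb{H}^2$ and comparing the areas of the (ideal) triangles formed by the fixed lifts of $\alpha,\gamma$ together with the successive lifts of $\ell$---exactly the nested-triangle computation you invoke to upgrade weak monotonicity to strict monotonicity. In other words, your ``upgrade'' step \emph{is} the paper's entire monotonicity proof; the preliminary limiting argument you run first buys nothing extra (it only yields weak monotonicity, and you correctly flag the bookkeeping hazards in matching up intersection points through the limit). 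You could simply delete that first step.

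For the convergence $\theta_i\to\phi$, the paper takes a slightly different route from yours: it appeals to the structure theorem for laminations (Theorem~\ref{structure}) to say that accumulation points of $\Theta_S(\gamma,L)$ must come from intersections of $\gamma$ with minimal sub-laminations of $L$, and since $\alpha$ is the only minimal component the only accumulation value is $\phi$. Your direct argument---that the intersection points $x_i$ accumulate on $\gamma\cap\alpha$ and the tangent direction of $\ell$ at $x_i$ converges to that of $\alpha$---is more elementary and equally valid; invoking Theorem~\ref{asympt}(2) as an alternative is less clean, since that statement is formulated for end-to-end arcs $\xi$ rather than for the half-infinite leaf $\ell$.
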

  \begin{figure}[H]
  	\includegraphics[scale=.35]{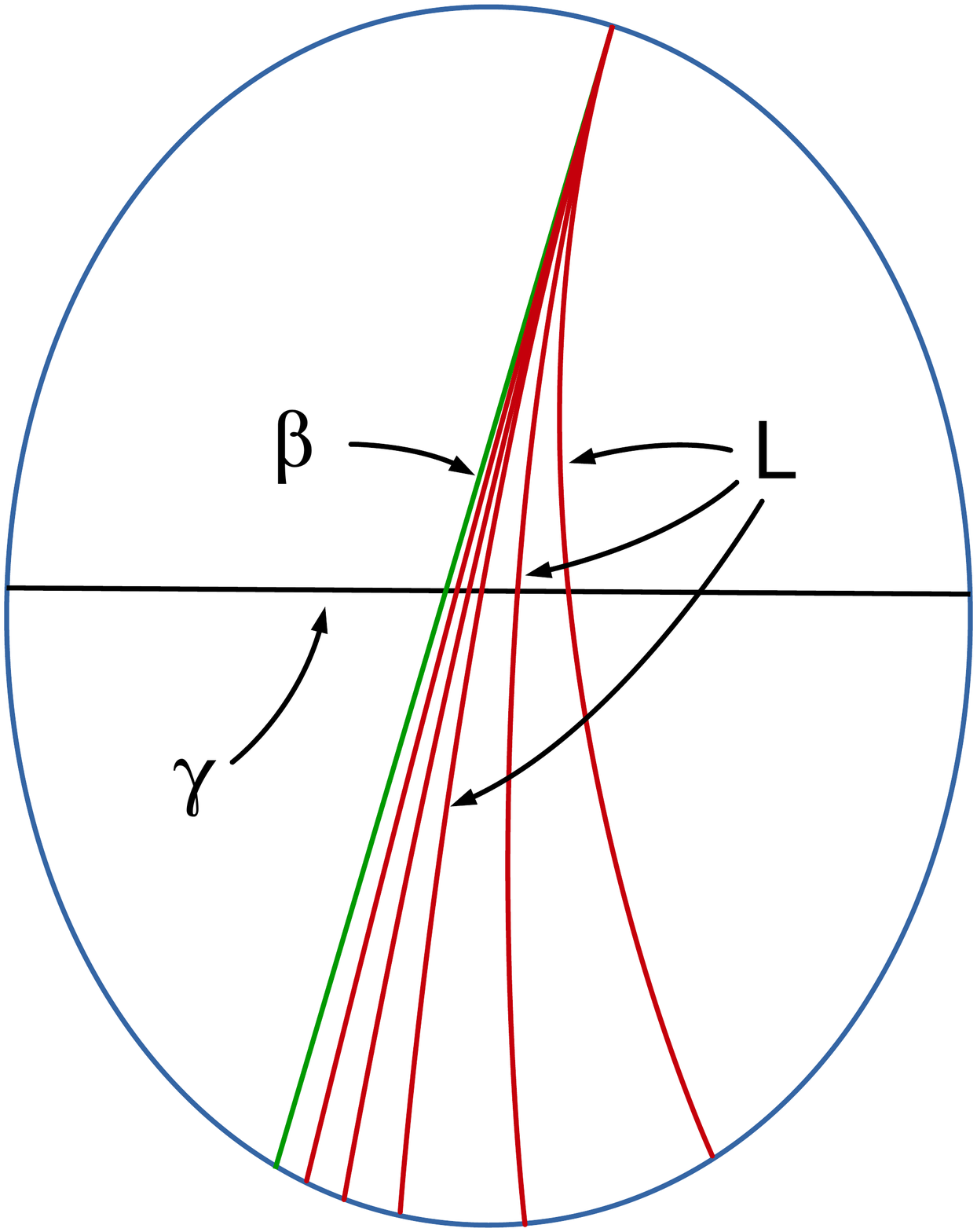}
  	\caption{Spiral}
  	\label{fig:spiral}
  \end{figure} 
  \begin{proof}
  Arguments here are very similar to those in the proof of Theorem \ref{asympt}. Recall that we have fixed one set of Fermi coordinates om $\mathcal{C}_\alpha$ and with respect these coordinates there is a precise direction in which $\ell$ spirals around $\alpha$. Assume that this direction is negative. The case of positive direction is very similar.
  	
  Taking one set of lifts of $\alpha, \gamma$ and $L$ our current situation looks like Figure \ref{fig:spiral}. To prove the monotonicity between $\theta_i$ and $\theta_j$ we compare the areas of the two triangles formed by the two lifts of $L$ corresponding to $\theta_i$ and $\theta_j$ with the fixed lifts of $\alpha, \gamma$. For the second part we use Theorem \ref{structure} to conclude that each point of accumulation of $\Theta_S(\gamma, L)$ corresponds to a point of intersection between $\gamma$ and a minimal sub-lamination of $L$. Since $L$ has exactly one minimal component, $\alpha$, the only limit of $(\theta_i)$ is $\phi$.
  \end{proof}
  \begin{rem}
   Using the description of $L_{\sign(1)\alpha_1, ..., \sign(k)\alpha_k}(\eta)$ from \S 1.4.2, the set up from Theorem \ref{generalangle} and the above lemma it follows that the ordered set of accumulation points of $\Theta_S(\gamma, L_{\sign(1)\alpha_1, ..., \sign(k)\alpha_k}(\eta))$ is precisely $\Theta_S(\gamma, \cup_{i=1}^k \alpha_i)$.
  \end{rem}

  \section{Proof of Theorem \ref{pants}}
  There are probably many different methods for constructing $\mathcal{P}_0$ once $\gamma_0$ is chosen. We describe one such method. We shall first choose $2g-2$ non-separating simple closed geodesics that make mutually disjoint set of angles with $\gamma_0$ and divides $S$ into $X$-pieces (four holed spheres with geodesic boundary).  Let us start by choosing one non-separating $\alpha_1 \in \mathcal{G}(S)$ that intersects $\gamma_0$ exactly once. Without loss of generality we may think that $\gamma_0$ and $\alpha_1$ are as in Figure \ref{fig:pantsconstr1}.
  \begin{figure}[H]
  	\includegraphics[scale=.35]{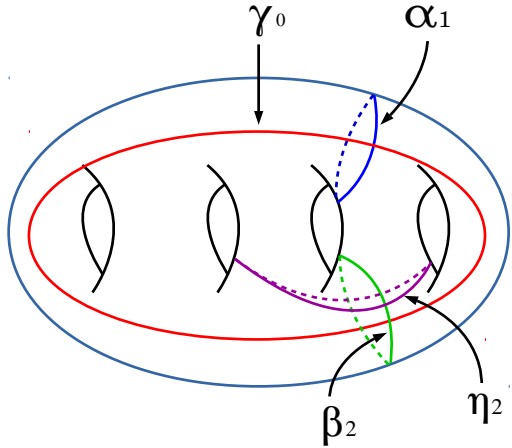}
  	\caption{Construction of $\mathcal{P}_0$}
  	\label{fig:pantsconstr1}
  \end{figure} 
  Now consider another simple closed geodesic $\beta_2$ as the green curve in Figure \ref{fig:pantsconstr1}. It also intersects $\gamma_0$ exactly once and do not intersect $\alpha_1$. If $\Phi(\gamma_0, \alpha_1) \neq \Phi(\gamma_0, \beta_2)$ then we choose $\alpha_2 =\beta_2$. If $\Phi(\gamma_0, \alpha_1) = \Phi(\gamma_0, \beta_2)$ then we modify $\beta_2$ as follows. Consider a simple closed geodesic $\eta_2$, as the purple curve in Figure \ref{fig:pantsconstr1}, that does not intersect $\alpha_1$and $\gamma_0$ but intersects $\beta_2$ exactly twice. Observe that $\mathcal{D}_{\eta_2}(\beta_2)$ intersects $\gamma_0$ exactly once. Moreover we have the following monotonicity. 
 \begin{claim}\label{pantsrotate}
  Let $\Theta(\gamma_0, \beta_2) = (\phi_1)$ and $\Theta(\gamma_0, \mathcal{D}_{\eta_2}(\beta_2)) = (\psi_1)$. Then $\phi_1 > \psi_1$.
  \end{claim}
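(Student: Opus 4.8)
The plan is to reduce Claim \ref{pantsrotate} to a positivity-of-area statement for a hyperbolic triangle, in the same spirit as the proof of Theorem \ref{asympt}.

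First I would localize the twist. Since $\iota(\gamma_0,\eta_2)=0$, the collars $\mathcal{C}_{\gamma_0}$ and $\mathcal{C}_{\eta_2}$ are disjoint, so $\gamma_0\cap\mathcal{C}_{\eta_2}=\emptyset$ and the Dehn twist homeomorphism $D_{\eta_2}$, being supported inside $\mathcal{C}_{\eta_2}$, fixes $\gamma_0$ pointwise. Hence the (non-geodesic) curve $c:=D_{\eta_2}(\beta_2)$ agrees with $\beta_2$ outside $\mathcal{C}_{\eta_2}$; in particular it passes through the point $x:=\gamma_0\cap\beta_2$ with the same tangent line as $\beta_2$, so it crosses $\gamma_0$ at $x$ at the angle $\phi_1$, and $c\cap\gamma_0=D_{\eta_2}(\beta_2\cap\gamma_0)=\{x\}$. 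Thus $\mathcal{D}_{\eta_2}(\beta_2)$ is the geodesic freely homotopic to $c$, it meets $\gamma_0$ exactly once, $\psi_1$ is the angle at that unique intersection, and $c$ is in minimal position with $\gamma_0$.

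Next I would set up the lifted configuration. Fix a universal cover $\pi\colon\mathbb{H}^2\to S$ with $\pi(0)=x$, let $g$ be the lift of $\gamma_0$ through $0$, $b$ the lift of $\beta_2$ through $0$ (crossing $g$ at $0$ at angle $\phi_1$), and $\tilde c$ the lift of $c$ through $0$. Because $c$ is in minimal position with $\gamma_0$, $\tilde c$ meets $g$ only at $0$, so its two ideal endpoints lie in the two different closed half-planes bounded by $g$; the geodesic $b'$ joining them is the relevant lift of $\mathcal{D}_{\eta_2}(\beta_2)$ and it crosses $g$ at a single point $\tilde x'$, at angle $\psi_1$. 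Now choose the lifts so that $g$, $b$ and $b'$ cut out a finite area geodesic triangle $\Delta$ with vertices $0$, $\tilde x'$ and $q:=b\cap b'$ (this third point exists because a single twist along $\eta_2$ keeps $\iota(\beta_2,\mathcal{D}_{\eta_2}(\beta_2))$ small; if $b\cap b'=\emptyset$ then $q$ is ideal and the argument only simplifies). As in Theorem \ref{asympt}, the point of passing to $\mathbb{H}^2$ is that the homotopy produced by the Dehn twist, which drags $\tilde c$ across the lifts of $\eta_2$ met by $b$, pins down on which side of $g$ the triangle $\Delta$ sits; reading this off from the twist drawn in Figure \ref{fig:pantsconstr1}, the interior angle of $\Delta$ at $0$ is the supplement $\pi-\phi_1$ while the interior angle at $\tilde x'$ is $\psi_1$. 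Then the area formula \eqref{areapolygon} applied to $\Delta$ gives
\[
0<|\Delta|=\pi-(\pi-\phi_1)-\psi_1-\rho=\phi_1-\psi_1-\rho,
\]
where $\rho\ge 0$ is the interior angle at $q$, and hence $\phi_1>\psi_1$. (Had the twist in Figure \ref{fig:pantsconstr1} the opposite handedness, the same computation would give $\phi_1<\psi_1$; since Claim \ref{pantsrotate} is only used to destroy the coincidence $\Phi(\gamma_0,\alpha_1)=\Phi(\gamma_0,\beta_2)$, one may in that case replace $\mathcal{D}_{\eta_2}$ by $\mathcal{D}_{\eta_2}^{-1}$.)

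The main obstacle is the middle step: organizing the lifted picture correctly — which half-planes the relevant lifts of $\eta_2$ occupy, the precise direction in which the Dehn-twist homotopy drags $\tilde c$, and hence the cyclic order on $\partial\mathbb{H}^2$ of the endpoints of $g$, $b$ and $b'$ — and thereby confirming that the interior angle of $\Delta$ at $0$ is $\pi-\phi_1$ rather than $\phi_1$, which is precisely the assertion that the twist in Figure \ref{fig:pantsconstr1} has the handedness yielding $\phi_1>\psi_1$. Once this combinatorial configuration is fixed, the inequality is immediate from \eqref{areapolygon}, exactly as in the proof of Theorem \ref{asympt}.
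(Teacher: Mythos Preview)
Your approach is essentially the paper's own: lift $\gamma_0$, $\beta_2$ and $\mathcal{D}_{\eta_2}(\beta_2)$ to $\mathbb{H}^2$, produce a geodesic triangle whose interior angles along the lift of $\gamma_0$ are $\pi-\phi_1$ and $\psi_1$, and read off $\phi_1>\psi_1$ from the area formula \eqref{areapolygon}, exactly as in Theorem \ref{asympt}.

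The one place where the paper is more explicit than your sketch is precisely the step you flag as ``the main obstacle.'' Rather than working with the full lifts $b$ and $b'$ and their ideal endpoints, the paper cuts $\beta_2$ and $\mathcal{D}_{\eta_2}(\beta_2)$ by $\eta_2$ into arcs $a_{\beta_2}$ and $a_{\mathcal{D}_{\eta_2}(\beta_2)}$, builds an explicit homotopy $H$ between them whose boundary maps into $\eta_2$ with a prescribed orientation, and lifts $H$ together with two lifts of $\eta_2$; this is what pins down the cyclic order on $\partial\mathbb{H}^2$ and guarantees the existence of the intersection point you call $q$ (the paper then splits into the two cases $x=y$ and $x\neq y$). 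Your parenthetical ``if $b\cap b'=\emptyset$ then $q$ is ideal'' is not quite right as stated---disjoint lifts need not be asymptotic---so if you want to make your version self-contained you should either carry out that homotopy analysis, or argue directly (e.g.\ from the fact that $\tilde c$ coincides with $b$ on the arc through $0$ between the two nearest lifts of $\mathcal{C}_{\eta_2}$) that the relevant endpoints of $b$ and $b'$ interlace.
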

 \begin{proof}
 As in the proof of Theorem \ref{asympt} we would lift $\gamma_0$, $\beta_2$ and $\mathcal{D}_{\eta_2}(\beta_2)$ to $\mathbb{H}^2$ and compare the angles there. For that we consider the point of intersection $y_0$ between $\gamma_0$ and $\beta_2$. In Figure \ref{fig:clearpic} the light green curve represents $\beta_2$, the magenta curve represents ${\eta_2}$, the violet curve represents $\mathcal{D}_{\eta_2}(\beta_2)$ and the red arc represents an arc of $\gamma_0$ corresponding to the angle $\phi_1$. Now fix one set of lifts of $\gamma_0$ and $\beta_2$ to $\mathbb{H}^2$ that intersect each other at a fixed point $y$. 
 
 Fix one set of Fermi coordinates on $\mathcal{C}_{\eta_2}$ and orient $\eta_2$ according to the orientation explained in \S 1.3.1 via these set of coordinates.  Observe that ${\eta_2}$ and $\beta_2$ intersect at two points and these two points divide $\beta_2$ into two geodesic arcs one of which contains $y_0$. Denote this last arc by $a_{\beta_2}$ and without loss of generality assume that this latter arc's restriction to $\mathcal{C}_{\eta_2}$ is contained in the left half of $\mathcal{C}_{\eta_2} \setminus \eta_2$. Observe that ${\eta_2}$ and $\mathcal{D}_{\eta_2}(\beta_2)$ also intersect at two points and these two points divide $\mathcal{D}_{\eta_2}(\beta_2)$ into two geodesic arcs one of which intersects $a_{\beta_2}$. Denote this arc by $a_{\mathcal{D}_{\eta_2}(\beta_2)}$. Consider parametrization $a_{\beta_2}: [0, 1] \to S$ and $a_{\mathcal{D}_{\eta_2}(\beta_2)}: [0, 1] \to S$ of these two arcs. Using the definitions of Dehn twist and length minimizing homotopy we observe that there is a smooth homotopy $H: [0, 1] \times [0, 1] \to S$ between $a_{\beta_2}$ and $a_{\mathcal{D}_{\eta_2}(\beta_2)}$ that has the following properties: $H(s, 0) = a_{\beta_2}(s)$, $H(s, 1) = a_{\mathcal{D}_{\eta_2}(\beta_2)}(s)$ and $H(0, t), H(1, t)$ maps $[0, 1]$ to $\eta_2$. Moreover these last two maps are orientation preserving with respect to the orientation of $\eta_2$. 
 
 \begin{figure}[H]
 	\includegraphics[scale=.35]{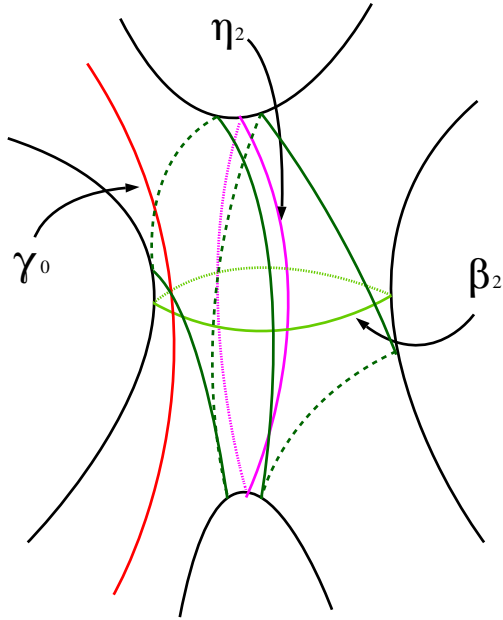}
 	\caption{For separable geodesics}
 	\label{fig:clearpic}
 \end{figure}
 
 To lift this homotopy to $\mathbb{H}^2$ we consider two lifts of $\eta_2$ as the two purple curves in Figure \ref{fig:lifted}. Observe that the above orientation of $\eta_2$ provides orientations of these two geodesics. This induced orientation increases height of the left lift and decreases height for the right lift. Thus lifting $H$ to $\mathbb{H}^2$ we obtain Figure \ref{fig:lifted}. Now it is evident that there is a point of intersection $x$ between the lifts $\tilde{a}_{\beta_2}$ of $a_{\beta_2}$ and $\tilde{a}_{\mathcal{D}_{\eta_2}(\beta_2)}$ of $a_{\mathcal{D}_{\eta_2}(\beta_2)}$. 
 
 We have two cases. First, $x$ and $y$ are identical. In this case our claim follows from the property that $H$ moves the two end points of $\tilde{a}_{\beta_2}$ along the two lifts of $\eta_2$ along the orientation $\eta_2$ (on $S$). In the second case $x$ and $y$ are distinct points. From Figure \ref{fig:lifted} we can assume that the homotopy $H$ between $\tilde{a}_{\beta_2}$ and $\tilde{a}_{\mathcal{D}_{\eta_2}(\beta_2)}$ is a rotation around $x$ sending $\tilde{a}_{\beta_2}$ to  $\tilde{a}_{\mathcal{D}_{\eta_2}(\beta_2)}$. 
 \begin{figure}[H]
 	\includegraphics[scale=.3]{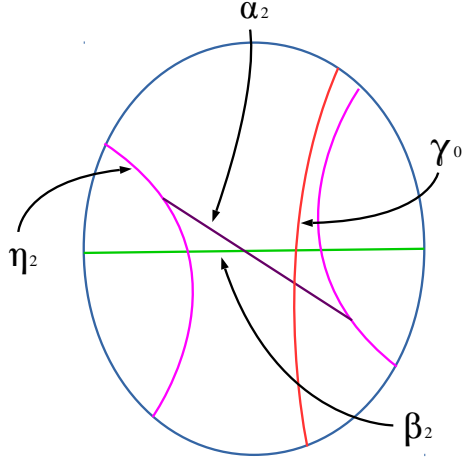}
 	\caption{Lifted on $\mathbb{H}^2$}
 	\label{fig:lifted}
 \end{figure}
Observe that $x$ divides $\tilde{a}_{\beta_2}$ into two connected components. Let $\tilde{b}_{\beta_2}$ be the closure of the component that contains $y$. So only the right lift of $\eta_2$ intersects $\tilde{b}_{\beta_2}$, say at $z$, and $H$ homotopes $\tilde{b}_{\beta_2}$ to a subarc $\tilde{b}_{\mathcal{D}_{\eta_2}(\beta_2)}$ of $\tilde{a}_{\mathcal{D}_{\eta_2}(\beta_2)}$. The monotonicity now follows from the positivity of the area of the triangle formed by $\tilde{b}_{\mathcal{D}_{\eta_2}(\beta_2)}$, $\tilde{b}_{\beta_2}$ and the image of $y$ under $H$ that is a subarc of the fixed lift of $\gamma_0$.
 \end{proof}
 \begin{figure}[H]
  	\includegraphics[scale=.25]{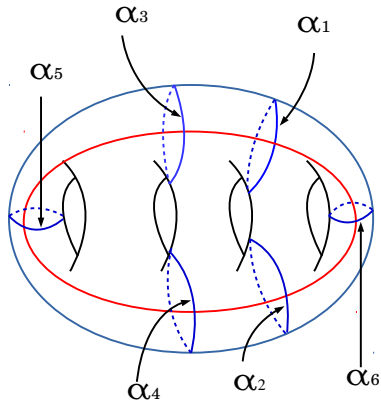}
  	\caption{All non-separating geodesics}
  	\label{fig:pansseparate}
 \end{figure}
 So we take $\alpha_2 = \mathcal{D}_\eta(\beta_2)$. We can repeat this procedure until we get a collection of non-separating simple closed geodesics $\alpha_1, ..., \alpha_{2g-2}$ that divide $S$ into a collection of $X$-pieces. Figure \ref{fig:pansseparate} explains this situation. In each of these $X$ pieces we have the situation as in Figure \ref{fig:xpiece} where the red arcs are the subarcs of $\gamma_0$. 
 \begin{figure}[H]
 	\includegraphics[scale=.25]{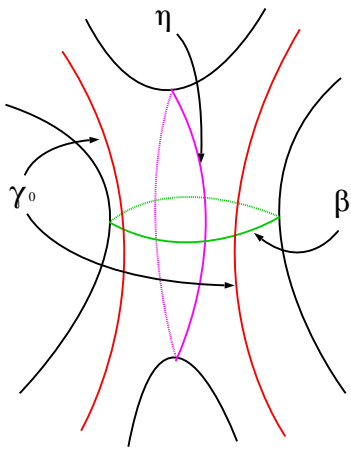}
 	\caption{Separating geodesics}
 	\label{fig:xpiece}
 \end{figure}
 Here we consider two simple closed geodesics $\beta$ and $\eta$ as in Figure \ref{fig:xpiece} where $\beta$ is separating and intersects $\gamma_0$ twice and $\eta$ is non-separating and do not intersect $\gamma_0$. Observe that this situation is very similar to the situation for non-separating geodesics above. The only difference in this situation is that now we have two subarcs of $\gamma_0$ instead of one. Arguments in the proof of Claim \ref{pantsrotate} work for each of these two arcs. Hence sufficient number of Dehn twist along $\eta$ would make sure that $\Phi(\gamma_0, \mathcal{D}^n_\eta(\beta))$ is disjoint from any finite collection of angles.  
\section{Proof of the Main Theorem}
 In this section we use the asymptotic growth of lengths and asymptotic structure of angle sets from \S3 along with the theory of geodesic laminations to prove Theorem \ref{geodesic-pants}. So we consider two closed hyperbolic surfaces $S, S'$ of genus $g$ with identical length-angle spectrum. 
 
 We begin by considering a simple closed non-separating geodesic $\gamma_0$ on $S$ and a pants decomposition $\mathcal{P}_0 = \{ \alpha_i: i =1, \cdots, \alpha_{3g-3}\}$ of $S$ provided by Theorem \ref{pants}. Then we fix a sequence $\bar{v}_n = (v_{1, n} , \cdots, v_{3g-3, n}) \in \mathbb{Z}^n_{>0}$ such that $\lim_{n \to \infty} v_{i, n} = \infty$ for each $i$ and
 \begin{equation}
  \lim_{n \to \infty} \frac{v_{i+1, n}}{v_{i, n}} = 0 ~~ \textrm{for each} ~~ i = 1, \cdots, 3g-2.
 \end{equation}
Then we consider the sequence of simple closed geodesics $T_{\bar{v}_n}(\gamma_0)$, in the free homotopy class of
\begin{equation*}
	D^{v_{1, n}}_{\alpha_1}  \circ D^{v_{2, n}}_{\alpha_2} \circ \cdots \circ  
	D^{v_{3g-3, n}}_{\alpha_{3g-3}}(\gamma_0).
\end{equation*}
As $n$ tends to infinity the sequence of closed geodesics $T_{\bar{v}_n}(\gamma_0)$ converge to the geodesic lamination $L_{\alpha_1, \cdots, \alpha_{3g-3}}(\gamma_0)$. Since $\mathcal{L}\Theta(S) = \mathcal{L}\Theta(S')$ we have simple closed geodesics $\gamma'_n, \delta'_n$ on $S'$ such that
 \begin{equation*}
 (\ell_S(\gamma_0), \ell_S(T_{\bar{v}_n}(\gamma_0)), \Theta_S(\gamma_0, T_{\bar{v}_n}(\gamma_0))) = (\ell_{S'}(\gamma'_n), \ell_{S'}(\delta'_n), \Theta_{S'}(\gamma'_n, \delta'_n).
 \end{equation*}
 A priori $\gamma'_n$ depends on $n$. Using the standard fact that the number of simple closed geodesics on any closed hyperbolic surface with length equal to (or bounded from above by) a fixed number is finite we can assume, up to extracting a subsequence, that we have a fixed simple closed geodesic $\gamma'_0$ such that
 \begin{equation*}
  (\ell_S(\gamma_0), \ell_S(T_{\bar{v}_n}(\gamma_0)), \Theta_S(\gamma_0, T_{\bar{v}_n}(\gamma_0)))= (\ell_{S'}(\gamma'_0), \ell_{S'}(\delta'_n), \Theta_{S'}(\gamma'_0, \delta'_n))
 \end{equation*}
 for some simple closed geodesics $\delta'_n$ on $S'$. Our goal now is to understand these simple closed geodesics $\delta'_n$. By Remark \ref{compact}, up to extracting a further subsequence, $\delta'_n$ converges to a geodesic lamination. We denote this geodesic lamination by $L$. Let $L_{\gamma'_0}$ be the smallest sub-lamination of $L$ that contains all those leaves of $L$ that intersect $\gamma'_0$. Hence $\Theta_{S'}(\gamma'_0, L) = \Theta_{S'}(\gamma'_0, L_{\gamma'_0})$. By Theorem \ref{structure} we have a finite collection of minimal sub-laminations $K_1,..., K_m$ whose complement in $L_{\gamma'_0}$ is a finite union of isolated leaves and each of these isolated leaves, along each of its ends, spirals around one of the $K_i$. Let $K_1, ..., K_l$ are those minimal sub-laminations of $L_{\gamma'_0}$ that intersect $\gamma'_0$. 
 \begin{lem}\label{accumulation:minimal}
 Each angle of intersection between $\gamma'_0$ and $K_i$ is a point of accumulation of $\Theta_{S'}(\gamma'_0, L_{\gamma'_0})$ and every point of accumulation of  $\Theta_{S'}(\gamma'_0, L_{\gamma'_0})$ is an angle of intersection between $\gamma'_0$ and one of the $K_i$s.	
 \end{lem}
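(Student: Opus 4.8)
The plan is to prove the two inclusions between the set of accumulation points of $\Theta_{S'}(\gamma'_0,L_{\gamma'_0})$ and the set $\bigcup_{i\le l}\Phi(\gamma'_0,K_i)$ separately. Two facts will be used throughout: (a) on a geodesic lamination the tangent line field is continuous, so the intersection-angle function $\theta\colon\gamma'_0\cap L_{\gamma'_0}\to(0,\pi)$ is continuous; hence a sequence of pairwise distinct crossings converging in the compact set $\gamma'_0\cap L_{\gamma'_0}$ has its angles converging to the angle at the limit crossing, and an accumulation point of $\Theta_{S'}(\gamma'_0,L_{\gamma'_0})$ is exactly the $\theta$-value at an accumulation point of $\gamma'_0\cap L_{\gamma'_0}$; and (b) the structure theorem, Theorem~\ref{structure}. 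That $\gamma'_0\cap L_{\gamma'_0}$ is actually infinite (so that accumulation points exist) follows because $\delta'_n\to L$ in the Chabauty topology while $\#|\Theta_{S'}(\gamma'_0,\delta'_n)|=\iota(\gamma'_0,\delta'_n)=\iota(\gamma_0,T_{\bar v_n}(\gamma_0))\to\infty$ by Proposition~\ref{intersection}, so the crossings of $\gamma'_0$ with $\delta'_n$ accumulate onto points of the closed set $L$ lying on $\gamma'_0$.

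For the inclusion ``an accumulation angle lies in some $\Phi(\gamma'_0,K_i)$'' I would take an accumulation point $\phi$, realized by pairwise distinct crossings $y_n\to y$ in $\gamma'_0\cap L_{\gamma'_0}$ with $\theta(y_n)\to\phi$, so that $\theta(y)=\phi$. The leaf $\ell_y$ through $y$ must lie in one of the minimal sub-laminations $K_i$: otherwise, by Theorem~\ref{structure}, $\ell_y$ would be one of the finitely many extra leaves $l_j$; but since two transverse geodesics meet near a common point only at that point, the accumulation $y_n\to y$ forces infinitely many \emph{distinct} leaves of $L_{\gamma'_0}$ to pass arbitrarily close to $y$, so cofinitely many of them lie in the (finitely many) minimal sub-laminations, each of which is closed, forcing $y\in K_i$ for some $i$ --- impossible, as $y\in l_j$ and distinct components of a lamination are disjoint. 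Since $y\in K_i\cap\gamma'_0$, this $K_i$ meets $\gamma'_0$, i.e.\ $i\le l$, and $\phi=\theta(y)\in\Phi(\gamma'_0,K_i)$.

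For the reverse inclusion I would fix $x\in\gamma'_0\cap K_i$ with $i\le l$ and show that $x$ is an accumulation point of $\gamma'_0\cap L_{\gamma'_0}$; by (a) this puts $\theta(x)$ among the accumulation angles. If $K_i$ is not a simple closed geodesic then, being minimal, it has no isolated leaf, so in a flow-box chart around $x$ it is a product $I\times C$ with $C$ a perfect set and $\gamma'_0$ a graph transverse to the leaf direction; hence $\gamma'_0\cap K_i$ is perfect near $x$ and $x$ is an accumulation point. The remaining case $K_i=\alpha$ a simple closed geodesic is where the work lies: a priori $\alpha$ could be an isolated component of $L_{\gamma'_0}$, and then its finitely many crossings with $\gamma'_0$ would be isolated entries of $\Theta_{S'}(\gamma'_0,L_{\gamma'_0})$. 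To exclude this I would return to the approximants $\delta'_n\to L$. Since $\alpha\subseteq L$, for large $n$ the simple closed geodesic $\delta'_n$ must run nearly parallel to $\alpha$ inside a fixed thin collar $\mathcal{C}_\alpha$, hence wrap around $\mathcal{C}_\alpha$ an unbounded number of times; by the monotonicity and counting of Theorem~\ref{asympt} --- transported to $S'$ through the equality $\Theta_S(\gamma_0,T_{\bar v_n}(\gamma_0))=\Theta_{S'}(\gamma'_0,\delta'_n)$ --- the Chabauty limit of these wrapping strands contains leaves of $L$ that spiral onto $\alpha$. Since $\iota(\gamma'_0,\alpha)\ge1$, the geodesic $\gamma'_0$ crosses $\mathcal{C}_\alpha$ in an essential arc through $x$, so each such spiralling leaf crosses $\gamma'_0$ near $x$ infinitely often with angle tending to $\theta(x)$; these leaves meet $\gamma'_0$, hence lie in $L_{\gamma'_0}$, and so $x$ is an accumulation point of $\gamma'_0\cap L_{\gamma'_0}$. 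Finally multiplicities match, since distinct crossings of a given $K_i$ with $\gamma'_0$ sit in disjoint clusters of nearby crossings.

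The step I expect to be the main obstacle is this last one: making rigorous that $\alpha\subseteq L$ forces, inside $L_{\gamma'_0}$, actual leaves that spiral onto $\alpha$ and cross $\gamma'_0$. Concretely one must identify the strands of $\delta'_n$ that stay $C^1$-close to $\alpha$, verify that their Chabauty limits genuinely spiral onto $\alpha$ rather than being absorbed into some larger minimal sub-lamination of $L$, and carry the angle-monotonicity of Theorem~\ref{asympt} from $S$ to $S'$ across the equality of angle sets; the rest of the argument is soft.
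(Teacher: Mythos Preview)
Your plan is correct and follows the same two-inclusion strategy as the paper, using the structure theorem for the ``accumulation $\Rightarrow$ minimal'' direction and a special argument for the closed-geodesic case in the other direction. The difference is that you make heavy weather of exactly the step the paper dispatches in one line.

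The paper's treatment of the case $K_i=\alpha$ a simple closed geodesic is simply: \emph{since $L$ is a Chabauty limit of simple closed geodesics, any simple closed leaf $\alpha$ of $L$ has a leaf of $L$ spiraling around it.} This needs neither Theorem~\ref{asympt} nor the equality $\Theta_S(\gamma_0,T_{\bar v_n}(\gamma_0))=\Theta_{S'}(\gamma'_0,\delta'_n)$. The reason is purely topological: if $\alpha$ were isolated in $L$, take a collar $U$ with $L\cap U=\alpha$; the connected curve $\delta'_n$ has points near $\alpha$ and (since $\ell(\delta'_n)\to\infty$ forces $\delta'_n\neq\alpha$) must exit $U$, so it meets $\partial U$; a subsequential limit of such boundary points lies in $L\cap\overline U\setminus\alpha=\emptyset$, a contradiction. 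Hence some other leaf $\ell$ of $L$ accumulates on $\alpha$, and by Theorem~\ref{structure} such $\ell$ is an isolated leaf spiraling onto $\alpha$ (it cannot sit in a minimal component $K\neq\alpha$, since then $\alpha\subset\overline\ell=K$ would force $K=\alpha$). Because $\gamma'_0$ crosses the collar of $\alpha$, this spiraling leaf meets $\gamma'_0$ infinitely often near $x$ with angles tending to $\theta(x)$, and in particular lies in $L_{\gamma'_0}$.

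So the ``main obstacle'' you flag --- transporting Theorem~\ref{asympt} across the angle-set equality and ruling out absorption into a larger minimal sub-lamination --- evaporates: the first is unnecessary, and the second is automatic from the structure theorem as above. Your wrapping-strands picture is exactly the concrete content of the one-line argument, but you should drop the appeal to Theorem~\ref{asympt} on $S'$, which is both unneeded and not straightforwardly available there at this point of the proof.
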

 \begin{proof}
 Observe first that each $K_i$ is also a minimal component of $L$.  Since $L$ is the limit of a sequence of simple closed geodesics it follows that if $K_i$ is a simple closed geodesic then there is a leaf of $L$ that spirals around $K_i$. In particular this spiraling leaf must be in $L_{\gamma'_0}$. Now we have two possible type of minimal components: not isolated and isolated. If $K_i$ is not isolated then the lemma follows from the definition (of not isolated). If $K_i$ is isolated then it follows from the above observation that there is at least one isolated leaf of $L_{\gamma'_0}$ that spirals around $K_i$. For the reverse direction observe that a point of accumulation of $\Theta_{S'}(\gamma'_0, L_{\gamma'_0})$ can not correspond to an intersection between $\gamma'_0$ and an isolated leaf. Hence we have the lemma by Theorem \ref{structure}.	 
 \end{proof}
 Now we use our angle sets explicitly to have deeper understanding of these $K_i$s. It is probably believable that if one $K_i$ is not a simple closed geodesic then the angle sets $\Theta_S(\gamma'_0, \delta'_n)$ should look significantly different from $\Theta_S(\gamma'_0, T_{\bar{v}_n}(\gamma_0))$. For our purpose the next result would suffice.
 \begin{thm}\label{pants1}
 Each of $K_1, ..., K_l$ is a simple closed geodesic.
\end{thm}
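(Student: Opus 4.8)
The plan is to argue by contradiction: suppose some $K_{i_0}$ among $K_1,\dots,K_l$ is not a simple closed geodesic. Being a minimal component that meets $\gamma'_0$, it is then a minimal lamination that is not a single closed leaf, so $\iota(\gamma'_0, K_{i_0})$ is infinite and, by minimality, the intersection angles along $\gamma'_0$ with $K_{i_0}$ accumulate densely on some nondegenerate set. I would first extract from the convergence $\delta'_n \to L$ the quantitative consequence: by Lemma \ref{accumulation:minimal} the set of accumulation points of $\Theta_{S'}(\gamma'_0, L_{\gamma'_0})$ is exactly the union of the angle sets $\Theta_{S'}(\gamma'_0, K_i)$, and these accumulation points must be realized as limits of angles in $\Theta_{S'}(\gamma'_0, \delta'_n)$ for $n$ large. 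Since $\Theta_{S'}(\gamma'_0, \delta'_n) = \Theta_S(\gamma_0, T_{\bar v_n}(\gamma_0))$, this forces the angle set $\Theta_S(\gamma_0, T_{\bar v_n}(\gamma_0))$ to have many angles accumulating near a point $\psi_0 \in \Theta_{S'}(\gamma'_0, K_{i_0})$ that, crucially, is \emph{not} among the finitely many angles of $\Theta_S(\gamma_0, \cup_{i=1}^{3g-3}\alpha_i)$ — or else to produce accumulation at far more points than the structure on $S$ permits.

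The heart of the argument is the rigid description of $\Theta_S(\gamma_0, T_{\bar v_n}(\gamma_0))$ supplied by \S3. By construction $T_{\bar v_n}(\gamma_0) \to L_{\alpha_1,\dots,\alpha_{3g-3}}(\gamma_0)$, and by the remark following Lemma \ref{anglelamination} the ordered set of accumulation points of $\Theta_S(\gamma_0, L_{\alpha_1,\dots,\alpha_{3g-3}}(\gamma_0))$ is precisely $\Theta_S(\gamma_0, \cup_{i=1}^{3g-3}\alpha_i)$, a \emph{finite} set of cardinality $\iota(\gamma_0,\cup\alpha_i)$. Moreover Theorem \ref{generalangle} pins down, for $\epsilon < \epsilon_0$, exactly how the angles of $\Theta_S(\gamma_0, T_{\bar v_n}(\gamma_0))$ cluster: essentially $\iota(\alpha_i,\gamma_0)v_{i,n}$ of them lie within $\epsilon$ of each $\phi_j \in \Phi(\gamma_0,\cup\alpha_i)$, arranged with the monotonicity of Theorem \ref{asympt}, and only a uniformly bounded number of stray angles lie outside $\bigcup_i \mathcal{C}_{\alpha_i}$ (Lemma \ref{uniformintersection}). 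So on the $S$ side the "accumulation profile" of $\Theta_S(\gamma_0, T_{\bar v_n}(\gamma_0))$ is: finitely many clusters, at the finitely many prescribed angles, with cluster sizes governed by the $v_{i,n}$, plus a bounded remainder. I would now transport this profile to $S'$ via the equality of angle sets and confront it with what $\delta'_n \to L$ forces: if $K_{i_0}$ is not a closed geodesic, then near each angle $\psi \in \Theta_{S'}(\gamma'_0, K_{i_0})$ — an \emph{infinite} set with accumulation — the angles of $\delta'_n$ cannot organize into finitely many clusters with the monotone, two-sided pattern of Theorem \ref{asympt}(1); more precisely, the number of distinct sub-clusters, or the location of the stray angles, will be incompatible with the bounded-remainder statement, yielding the contradiction.

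I expect the main obstacle to be making the last confrontation fully rigorous: one must show that a minimal non-closed $K_{i_0}$ genuinely obstructs the cluster-and-monotonicity structure of $\Theta_S(\gamma_0, T_{\bar v_n}(\gamma_0))$, rather than merely redistributing the same total count. The cleanest route I see is to use the hierarchy $\lim_n v_{i+1,n}/v_{i,n}=0$: this totally orders the clusters by size, so for each fixed $\epsilon$ and large $n$ the angle set $\Theta_S(\gamma_0,T_{\bar v_n}(\gamma_0))$ has a \emph{unique} dominant cluster (around $\phi$-values coming from $\alpha_1$), a unique next one, and so on, each with the one-variable monotone shape of Theorem \ref{asympt}. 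On the $S'$ side, a minimal $K_{i_0}$ that is not closed produces, for small $\epsilon$, a cluster of angles near a $\psi$-value that itself splits into sub-clusters as $\epsilon\to 0$ without ever becoming a single monotone string — because the leaves of $K_{i_0}$ return to a neighborhood of $\gamma'_0$ at infinitely many genuinely different angles, not all within bounded distance of one value. Comparing the two "cluster trees" along the refinement $\epsilon \to 0$ then shows one cannot match the $S$ profile, and I would conclude that every $K_i$ meeting $\gamma'_0$ is a simple closed geodesic, as claimed. The subsidiary facts that $\Theta_{S'}(\gamma'_0,\delta'_n)$ inherits the monotonicity of Theorem \ref{asympt} inside each collar and that stray angles stay bounded (Lemma \ref{uniformintersection}, Remark \ref{anglepatch}(ii), uniformly in $\eta$) are exactly what licenses this comparison.
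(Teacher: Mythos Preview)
Your overall strategy --- compare the accumulation profile of $\Theta_S(\gamma_0,T_{\bar v_n}(\gamma_0))$ with that of $\Theta_{S'}(\gamma'_0,\delta'_n)$ --- is the right one, and it matches the paper's opening move. But the proposal has a genuine gap at the step you yourself flag as the ``main obstacle.'' The assertion that a non-closed minimal $K_{i_0}$ forces the angles $\Theta_{S'}(\gamma'_0,K_{i_0})$ to be ``infinitely many genuinely different angles, not all within bounded distance of one value'' is not proved and is not obvious: a priori a minimal lamination could intersect $\gamma'_0$ at infinitely many points all making the \emph{same} angle, or angles drawn from a finite list. Your ``cluster tree as $\epsilon\to 0$'' picture does not by itself exclude this, and the hierarchy $v_{i+1,n}/v_{i,n}\to 0$ only orders the clusters on the $S$ side --- it says nothing about how a hypothetical non-closed $K_{i_0}$ on $S'$ distributes its angles.

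The paper closes this gap in two clean steps that replace the vague cluster-tree comparison. First, it proves rigorously that \emph{every} angle $\phi$ between $\gamma'_0$ and any $K_i$ must lie in the finite set $\Phi(\gamma_0,\cup_{i=1}^{3g-3}\alpha_i)=\{\Phi_1,\dots,\Phi_M\}$: one takes an ordered sequence in $\Theta_{S'}(\gamma'_0,L_{\gamma'_0})$ converging to $\phi$, pulls it back to $\Theta_S(\gamma_0,T_{\bar v_n}(\gamma_0))$, and uses Theorem \ref{generalangle} and Lemma \ref{anglelamination} to force $\phi=\Phi_k$ for some $k$. Second --- and this is the idea you are missing --- having pinned all angles to a finite list, one argues that if $K_i$ had no isolated leaf then $\gamma'_0\cap K_i$ would be uncountable; picking a subarc $\ell_0$ of a leaf between two intersections with $\gamma'_0$ and nearby parallel subarcs $\ell_j$ (from minimality), one lifts to $\mathbb{H}^2$ and, after passing to a subsequence so that the angles at both ends are constant (possible since they come from $\{\Phi_1,\dots,\Phi_M\}$), obtains a geodesic rectangle of area zero via \eqref{areapolygon}. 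That area contradiction is what actually rules out a non-closed minimal $K_i$, and it is independent of any cluster-size bookkeeping.
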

\begin{proof}
 We begin by considering the angles in $$\Phi(\gamma_0, \cup_{i=1}^{3g-3} \alpha_i) = \{\Phi_1, \cdots, \Phi_M \}$$ where $\Phi_i$s are distinct. Let $\epsilon_0$ be the minimum of the distances between distinct $\Phi$s. For any $\epsilon < \epsilon_0/4$ the neighborhoods $I^\epsilon_i = (\Phi_i - \epsilon, \Phi_i + \epsilon)$ are at least $\epsilon$ distance apart. This makes sure that whenever we have an ordered subset $(\theta_1, \cdots, \theta_k)$ of some $\Theta(\gamma_0, \delta_{v_n})$ that has the property that $$|\theta_i - \theta_j|< \epsilon ~~ \textrm{for all}~~ i, j ~~ \textrm{and one of the} ~~ \theta_j \in I^\epsilon_l$$ then $\theta_i \in I^\epsilon_l$ for all $i=1, \cdots, k$. Now let $\phi$ be an angle of intersection between $\gamma'_0$ and one of the $K_i$s. We shall first show that $\phi = \Phi_k$ for some $k$. 
 
 By the last lemma $\phi$ is a point of accumulation of $\Theta_{S'}(\gamma'_0, L_{\gamma'_0})$ and so we have an ordered sequence $(\phi_1, \phi_2, \cdots)$ of $\Theta_{S'}(\gamma'_0, L_{\gamma'_0})$ that converges to $\phi$. We may choose this sequence in such a way that $\phi_j \in (\phi - \frac{\epsilon}{4}, \phi + \frac{\epsilon}{4})$ for all $j$. Using the convergence $\delta'_n \to L$ we have ordered subset $(\psi_1^n, \cdots, \psi_{k_n}^n)$ of $\Theta(\gamma'_0, \delta'_n)$,  for $n$ sufficiently large, such that $\psi_i^n \to \phi_i$ as $n \to \infty$ (in particular the size of these ordered sets $k_n \to \infty$). We may further assume by making $n$ larger if necessary that each $\psi_i^n \in (\phi_i - \frac{\epsilon}{2}, \phi_i + \frac{\epsilon}{2})$. This implies that $$|\psi_i^n - \psi_j^n|< \epsilon ~~ \textrm{for each} ~~ i, j = 1, \cdots, k_n.$$ 
 
 Using $\Theta_{S'}(\gamma'_0, \delta'_n) = \Theta_S(\gamma_0, T_{\bar{v}_n}(\gamma_0))$ and the first paragraph of this proof we conclude that each $\psi_i^n \in I^\epsilon_k$ for some $k$ independent of $i$. Since we have finitely many $\Phi_k$s, up to extracting a subsequence, we may assume that each $\psi_i^n \in I^\epsilon_k$ for some fixed $k$ independent of $i$ and $n$. Now let $\Phi_k$ be an angle of intersection between $\gamma_0$ and $\alpha_l$. By Theorem \ref{generalangle} it follows that if we choose $\epsilon$ sufficiently small then we can make sure, up to discarding a few angles if necessary, that $(\psi_1^n, \cdots, \psi_{k_n}^n)$ is an ordered subset of  $\Theta(\gamma_0|_{\mathcal{C}_{\alpha_l}}, T_{\bar{v}_n}(\gamma_0)|_{\mathcal{C}_{\alpha_l}})$. Depending on if $\alpha_l$ is separating or not we have two cases. If $\alpha_l$ is non-separating then $\Phi_k$ is the only angle of intersection between $\alpha_l$ and $\gamma_0$. If $\alpha_l$ is separating then we have two points of intersection between $\alpha_l$ and $\gamma_0$. If both the angles at these two intersections are equal to $\Phi_k$ then again we are okay. The last situation is that the two angles are different and one of them is $\Phi_k$. Let $\gamma_0^1$ denote $\gamma_0|_{\mathcal{C}_{\alpha_l}}$ in the first two cases and the subarc of $\gamma_0|_{\mathcal{C}_{\alpha_l}}$ that corresponds to the angle $\Phi_k$ in the second case. Hence $(\psi_1^n, \cdots, \psi_{k_n}^n)$ is an ordered subset of  $\Theta(\gamma_0^1, T_{\bar{v}_n}(\gamma_0)|_{\mathcal{C}_{\alpha_l}})$.
 Using the convergence $T_{\bar{v}_n}(\gamma_0) \to L_{\alpha_1, \cdots, \alpha_{3g-3}}(\gamma_0)$ and our assumption $\psi_i^n \to \phi_i$ we conclude that $\phi_i \in \Theta(\gamma_0^1, L_{\alpha_1, \cdots, \alpha_{3g-3}}(\gamma_0)|_{\mathcal{C}_{\alpha_l}})$. 
 
 From Lemma \ref{anglelamination} we have a description for $\Theta(\gamma_0^1, L_{\alpha_1, \cdots, \alpha_{3g-3}}(\gamma_0)|_{\mathcal{C}_{\alpha_l}}).$ In particular, a fixed angle can appear in $\Theta(\gamma_0^1, L_{\alpha_1, \cdots, \alpha_{3g-3}}(\gamma_0)|_{\mathcal{C}_{\alpha_l}})$ at most $2\iota(\gamma_0^1, \alpha_l)$ times and $\Phi_k$ is its only accumulation point. Let $\psi$ be an angle in $\Theta(\gamma_0^1, L_{\alpha_1, \cdots, \alpha_{3g-3}}(\gamma_0)|_{\mathcal{C}_{\alpha_l}})$ that is not equal to $\Phi_k$. Choose $\epsilon > 0$ small enough such that $(\psi - \epsilon, \psi + \epsilon)$ and $I^\epsilon_1, \cdots, I^\epsilon_k$ are disjoint and \eqref{cardin2} is true. Then the number of angles in any $\Theta_S(\gamma_0^1, T_{\bar{v}_n}(\gamma_0))$ that lie in $(\psi - \epsilon, \psi + \epsilon)$ is uniformly bounded, independent of $n$. In particular, only finitely many $\phi_i$ can be equal to $\psi$. Hence $\phi = \Phi_k$. 
 
Now we show that each $K_i$ contains an isolated leaf. We argue by contradiction and assume that $K_i$ does not contain any isolated leaf. Hence each point in $\gamma'_0 \cap K_i$ is a point of accumulation of $\gamma'_0 \cap K_i$. In particular $\gamma'_0 \cap K_i$ contains uncountably many points. By the first part of this proof all the angles of these intersections must come from the finite set  $\{\Phi_1, \cdots, \Phi_k \}$. Let $\ell$ be a leaf of $K_i$. Since $K_i$ is minimal $\ell$ must intersect $\gamma'_0$ infinitely many times. Let $\ell_0$ be a subarc of $\ell$ between two such intersections. Using minimality once again we find subarcs $\ell_i$ of possibly other leaves of $K_i$ such that $\ell_i \to \ell_0$ uniformly. Now lift the whole situation on $\mathbb{H}^2$. Let $\gamma_1, \gamma_2$ be two fixed lifts of $\gamma'_0$ such that a lift $\tilde{\ell}_0$ of $\ell_0$ joins $\gamma_1$ and $\gamma_2$. Using the fact that $\ell_i \to \ell$ uniformly we can find lifts $\tilde{\ell}_i$ of each $\ell_i$ such that $\tilde{\ell}_i$ also joins $\gamma_1$ and $\gamma_2$. Since $\ell_0$ and $\ell_i$s are parts of a geodesic lamination their lifts $\tilde{\ell}_0$ and $\tilde{\ell}_i$ are mutually disjoint. Thus subarcs of $\gamma_1, \gamma_2, \tilde{\ell}_i$ and $\tilde{\ell}_j$ for each $i \neq j$ bound a geodesic rectangle, say $R(i, j)$. 

Now consider the angles of intersections $\Theta(\gamma_1, \tilde{\ell}_i)$. Since they must be one of $\Phi_1, \cdots, \Phi_k$ we can extract a subsequence of $\ell_i$s such that $\Theta(\gamma_1, \tilde{\ell}_i) = (\Phi_l)$ for some $l$ independent of $i$. Extracting a further subsequence we can further ensure that $\Theta(\gamma_2, \tilde{\ell}_i) = (\Phi_k)$ for some $k$ independent of $i$. Finally we reach our contradiction by computing the area of $R(i, j)$ for this extracted sequence of $\ell_i, \ell_j$s (which, by our assumption, is equal to zero!)
\end{proof}  
Let us denote the simple closed geodesic $K_i$ by $\alpha'_i$. So there are leaves of $L_{\gamma'_0}$ that spiral around $\alpha'_1, ..., \alpha'_l$. Of course $L_{\gamma'_0}$ can have much complicated behavior {\it away} from $\gamma'_0$. Now we make this observation precise. 
\begin{defn}
	Let $p$ be a point of intersection between $\gamma'_0$ and $L_{\gamma'_0}$. Let $\ell$ be the leaf of $L_{\gamma'_0}$ that intersects $\gamma'_0$ at $p$. We say that $p$ corresponds to a spiraling if one of the half-leaves of $\ell$, determined by $p$, spiral around one of $\alpha'_1, \cdots, \alpha'_l$ along one of its (two) ends. 
\end{defn}
\begin{lem}\label{intersectionbound1}
	All but finitely many points of intersections between $\gamma'_0$ and $L_{\gamma'_0}$ correspond to a spiraling.
\end{lem}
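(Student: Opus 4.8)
The plan is to read the statement off from the structure theorem (Theorem~\ref{structure}) applied to $L_{\gamma'_0}$, together with Theorem~\ref{pants1}. Write $L_{\gamma'_0}=K_1\sqcup\cdots\sqcup K_m\sqcup l_1\sqcup\cdots\sqcup l_p$ as in Theorem~\ref{structure}, and let $K_1,\ldots,K_l$ be those minimal sub-laminations that meet $\gamma'_0$; by Theorem~\ref{pants1} each $K_i$ with $i\le l$ is a simple closed geodesic $\alpha'_i$. First I would partition $\gamma'_0\cap L_{\gamma'_0}$ according to which leaf of $L_{\gamma'_0}$ a given point lies on. A point lying on $K_1\cup\cdots\cup K_l$ belongs to $\gamma'_0\cap\alpha'_i$ for some $i\le l$, and there are only $\sum_{i\le l}\iota(\gamma'_0,\alpha'_i)<\infty$ such points; no point lies on $K_{l+1}\cup\cdots\cup K_m$, by the choice of $l$. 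Hence the whole content is about points lying on the finitely many isolated leaves $l_1,\ldots,l_p$.

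Next, fix an isolated leaf $\ell=l_i$ with ends $e_1,e_2$, and use that each of $e_1,e_2$ accumulates exactly on one minimal sub-lamination of $L_{\gamma'_0}$, say $K_{j_1}$ and $K_{j_2}$ respectively. If one of these, say $K_{j_1}$, meets $\gamma'_0$, then $K_{j_1}=\alpha'_m$ for some $m\le l$; now every lift of the $e_1$-side half-leaf of $\ell$ determined by any point $p\in\ell\cap\gamma'_0$ shares its endpoint at infinity with a lift of $\alpha'_m$, so this half-leaf spirals around $\alpha'_m$ and $p$ corresponds to a spiraling --- and therefore \emph{every} point of $\ell\cap\gamma'_0$ does. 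In the complementary case, both $K_{j_1}$ and $K_{j_2}$ are disjoint from $\gamma'_0$; since $S'$ is compact they lie at positive distance from the closed geodesic $\gamma'_0$, so one can pick open sets $U_1\supset K_{j_1}$ and $U_2\supset K_{j_2}$ with $\overline{U_1}\cup\overline{U_2}$ disjoint from $\gamma'_0$. As $e_1$ (resp.\ $e_2$) eventually enters $U_1$ (resp.\ $U_2$), the set $\ell\setminus(U_1\cup U_2)$ is compact, so $\ell$ meets $\gamma'_0$ in only finitely many (transverse) points. Therefore the set of points of $\gamma'_0\cap L_{\gamma'_0}$ \emph{not} corresponding to a spiraling is contained in the union of the finite set $\bigcup_{i\le l}(\gamma'_0\cap\alpha'_i)$ with the sets $\ell\cap\gamma'_0$ over the at most $p$ isolated leaves $\ell$ of the second type; this union is finite, which is the lemma.

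I expect no real obstacle here, only one point that wants care rather than calculation: the clean invocation that each end of an isolated leaf of $L_{\gamma'_0}$ accumulates precisely on a single minimal sub-lamination $K_j$. This is what makes ``spirals around $\alpha'_m$'' literally the condition in the definition of spiraling in the first case, and it is what traps the ends of $\ell$ in $U_1\cup U_2$ in the second. Since Theorem~\ref{structure} as quoted only guarantees spiraling along ``one of'' the two ends, I would insert a short remark first, recording the standard fact that on a compact hyperbolic surface both ends of every isolated leaf of a geodesic lamination spiral around minimal sub-laminations (the $\omega$-limit set of a half-leaf being a minimal sub-lamination); cf.\ the proof of Lemma~\ref{accumulation:minimal}. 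With that in hand, the argument above is just finiteness bookkeeping plus the elementary neighbourhood/compactness estimate of the second case.
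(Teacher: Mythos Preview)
Your argument is correct and rests on the same ingredients as the paper's proof: the structure theorem for $L_{\gamma'_0}$, Theorem~\ref{pants1} identifying the minimal components meeting $\gamma'_0$ with simple closed geodesics, and the finiteness of the isolated leaves. The paper packages these into a short contradiction argument (an infinite set of non-spiraling intersection points would accumulate on some $\gamma'_0\cap\alpha'_i$ by Lemma~\ref{accumulation:minimal}, forcing the corresponding leaf to spiral around $\alpha'_i$), whereas you unwind this into a direct case analysis on the leaves; the content is the same. Your explicit remark that \emph{both} ends of each isolated leaf spiral around minimal sub-laminations is a useful clarification that the paper leaves implicit, and your neighbourhood/compactness argument in the second case is exactly the step hidden in the paper's one-line ``all but finitely many of $x_1,\dots,x_n,\dots$ correspond to spiraling around $\alpha'_i$.''
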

\begin{proof}
	We argue by contradiction and assume that there are infinitely many points $x_1, \cdots, x_n, \cdots$ of intersections between $\gamma'_0$ and $L_{\gamma'_0}$ that does not correspond to a spiraling. Since $\gamma'_0$ is a closed geodesic the sequence of points $x_1, \cdots, x_n, \cdots$ have a point of accumulation on $\gamma'_0$. By Lemma \ref{accumulation:minimal} this point of accumulation must be a point of intersection between $\gamma'_0$ and a minimal component of $L_{\gamma'_0}$. By the last theorem it must be one of the $\alpha'_i$s. Since $\alpha'_i$ is a closed geodesic all but finitely many of $x_1, \cdots, x_n, \cdots$ corresponds to spiraling around $\alpha'_i$. This is a contradiction. 
\end{proof}
Let $N_{\gamma'_0}$ be the number of points of intersection between $\gamma'_0$ and $L_{\gamma'_0}$ that does not corresponds to a spiraling. By the last lemma and Lemma \ref{anglelamination} the (ordered) set of accumulation points of $\Theta_{S'}(\gamma'_0, L_{\gamma'_0})$ is exactly $\Theta_{S'}(\gamma'_0, \cup_{i=1}^k \alpha'_i)$. We now compare this with $\Theta_S(\gamma_0, \mathcal{P}_0)$
\begin{lem}\label{accumulation}
 As ordered sets $\Theta_S(\gamma_0, \mathcal{P}_0) = \Theta_{S'}(\gamma'_0, \cup_{i=1}^k \alpha'_i)$.
\end{lem}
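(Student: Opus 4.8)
The plan is to read off \emph{both} ordered sets from the single sequence of finite ordered angle tuples
\begin{equation*}
\Xi_n \;:=\; \Theta_S(\gamma_0, T_{\bar v_n}(\gamma_0)) \;=\; \Theta_{S'}(\gamma'_0, \delta'_n),
\end{equation*}
exploiting the convergences $T_{\bar v_n}(\gamma_0) \to L_{\alpha_1, \cdots, \alpha_{3g-3}}(\gamma_0)$ and $\delta'_n \to L$, together with $\Theta_{S'}(\gamma'_0, L) = \Theta_{S'}(\gamma'_0, L_{\gamma'_0})$. By the remark following Lemma \ref{anglelamination} the ordered set of accumulation points of $\Theta_S(\gamma_0, L_{\alpha_1, \cdots, \alpha_{3g-3}}(\gamma_0))$ is $\Theta_S(\gamma_0, \cup_{i=1}^{3g-3}\alpha_i) = \Theta_S(\gamma_0, \mathcal{P}_0)$, and it has just been shown that the ordered set of accumulation points of $\Theta_{S'}(\gamma'_0, L_{\gamma'_0})$ is $\Theta_{S'}(\gamma'_0, \cup_{i=1}^k \alpha'_i)$. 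So it suffices to prove that these two ordered sets of accumulation points coincide, and I will do this by extracting each of them from $(\Xi_n)$ by one and the same combinatorial recipe.

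First I would describe $\Xi_n$ as seen from $S$. Write $\Phi(\gamma_0, \cup_{i=1}^{3g-3}\alpha_i) = \{\Phi_1, \cdots, \Phi_M\}$ with the $\Phi_j$ distinct, let $\epsilon_0$ be the minimal distance between them, and fix $\epsilon < \epsilon_0/2$. Applying Theorem \ref{generalangle}, with Theorem \ref{asympt} describing each block, shows that for all large $n$ the entries of $\Xi_n$ lying within $\epsilon$ of some $\Phi_j$ group, in the cyclic order of the points of $\gamma_0 \cap \bigcup_i \alpha_i$, into clusters $B^n_q$ --- one for each intersection point $q$ --- with $\#|B^n_q| \approx \iota(\alpha_{i(q)}, \gamma_0)\, v_{i(q), n}$, hence tending to infinity, while the number of entries of $\Xi_n$ outside $\bigcup_j(\Phi_j - \epsilon, \Phi_j + \epsilon)$ stays bounded independently of $n$ (by Lemma \ref{uniformintersection}). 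Consequently a value $\psi$ is an accumulation point of $\Theta_S(\gamma_0, L_{\alpha_1, \cdots, \alpha_{3g-3}}(\gamma_0))$ precisely when $\#|\{\textrm{entries of }\Xi_n\textrm{ in }(\psi - \delta, \psi + \delta)\}| \to \infty$ for every $\delta > 0$ --- the forward implication coming from the convergence of $T_{\bar v_n}(\gamma_0)$ and the description of $\Theta_S(\gamma_0, L_{\alpha_1, \cdots, \alpha_{3g-3}}(\gamma_0))$ as a union of monotone spiralling sequences (Lemma \ref{anglelamination}), the reverse from the uniform bound --- and the cyclic order of these accumulation points is the cyclic order of the clusters $B^n_q$ in $\Xi_n$. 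This recipe recovers $\Theta_S(\gamma_0, \mathcal{P}_0)$ from $(\Xi_n)$ alone.

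Next I would prove the corresponding structural statement for $\Xi_n$ as seen from $S'$, now with no a priori control and instead using $\delta'_n \to L$ and the structure of $L_{\gamma'_0}$ already established. Since $\gamma'_0$ is not a leaf of $L$ and every leaf of $L_{\gamma'_0}$ crosses $\gamma'_0$ transversally, each intersection point of $\delta'_n$ with $\gamma'_0$ sub-converges to a point of $\gamma'_0 \cap L_{\gamma'_0}$ with convergence of the angle, and at a point of $\gamma'_0 \cap L_{\gamma'_0}$ that is isolated in $\gamma'_0 \cap L_{\gamma'_0}$ exactly one strand of $\delta'_n$ passes for $n$ large. Combined with Lemma \ref{accumulation:minimal} (the accumulation points of $\gamma'_0 \cap L_{\gamma'_0}$ are the points $\gamma'_0 \cap \alpha'_i$), Lemma \ref{intersectionbound1} (all but $N_{\gamma'_0}$ of these intersections are spiralling) and Lemma \ref{anglelamination} (spiralling around a fixed $\alpha'_i$ gives a monotone sequence of angles converging to the angle of $\gamma'_0$ with $\alpha'_i$), this forces: for $\delta$ below the minimal gap between the accumulation values of $\Theta_{S'}(\gamma'_0, L_{\gamma'_0})$ and $n$ large, all but boundedly many entries of $\Xi_n$ lie within $\delta$ of those accumulation values, again in growing clusters occurring in the cyclic order of the accumulation values. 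Hence the same recipe applied to $(\Xi_n)$ recovers the ordered set of accumulation points of $\Theta_{S'}(\gamma'_0, L_{\gamma'_0})$, namely $\Theta_{S'}(\gamma'_0, \cup_{i=1}^k \alpha'_i)$. Since the recipe is applied to one and the same sequence, the two ordered sets agree, which is the lemma.

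The hard part will be the reverse implication on the $S'$ side: showing directly from $\delta'_n \to L$ that $\Xi_n$ cannot carry an unbounded number of entries whose values stay away from every accumulation value of $\Theta_{S'}(\gamma'_0, L_{\gamma'_0})$. Such a surplus of intersections of $\delta'_n$ with $\gamma'_0$ would sub-converge to infinitely many distinct points of $\gamma'_0 \cap L_{\gamma'_0}$ (here one uses that an isolated point of $\gamma'_0 \cap L_{\gamma'_0}$ carries a single strand of $\delta'_n$), hence produce a fresh accumulation point of $\gamma'_0 \cap L_{\gamma'_0}$, hence --- by Lemma \ref{accumulation:minimal} and Theorem \ref{pants1} --- a fresh simple closed geodesic among the $K_i$, a contradiction; making the convergence of the intersection points and of their angles precise enough to run this argument cleanly is the delicate step. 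The remaining bookkeeping on the $S$ side --- a separating $\alpha_i$ contributing two clusters at possibly equal values, and the bounded number of ``outlier'' angles at the ends of each block being absorbed into the interstitial count --- is routine by comparison.
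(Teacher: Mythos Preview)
Your proposal is correct and follows essentially the same route as the paper: describe the common sequence $\Xi_n=\Theta_S(\gamma_0,T_{\bar v_n}(\gamma_0))=\Theta_{S'}(\gamma'_0,\delta'_n)$ on the $S$ side via Theorem \ref{generalangle} as growing ordered clusters around the angles in $\Theta_S(\gamma_0,\mathcal P_0)$ plus a bounded remainder, describe it on the $S'$ side via Lemma \ref{intersectionbound1}, Lemma \ref{anglelamination} and the convergence $\delta'_n\to L$ as growing ordered clusters around the angles in $\Theta_{S'}(\gamma'_0,\cup_i\alpha'_i)$ plus a bounded remainder, and then match the two descriptions. The paper's proof is exactly this, only stated more tersely; your compactness argument for bounding the outliers on the $S'$ side makes explicit what the paper absorbs into the sentence invoking $\delta'_n\to L$ together with Lemma \ref{intersectionbound1}.
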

\begin{proof}
 Let $\Theta_{S'}(\gamma'_0, \cup_{i=1}^k \alpha'_i) = (\theta_1, \ldots, \theta_p)$. By the last lemma all but $N_{\gamma'_0}$ angles in $\Theta(\gamma'_0, L_{\gamma'_0})$ corresponds to spiraling around one of the $\alpha'_1, ..., \alpha'_l$. Since $\Theta(\gamma'_0, L) = \Theta(\gamma'_0, L_{\gamma'_0})$ using the convergence $\delta'_n \to L$ and Lemma \ref{anglelamination} we conclude that $\Theta_{S'}(\gamma'_0, \delta'_n)$ consists of an ordered set $(I(\theta_1),  \ldots , I(\theta_p))$ where each entry in $I(\theta_i)$ lie in $(\theta_i - \epsilon, \theta_i + \epsilon)$ and rest of the angles in $\Theta_{S'}(\gamma'_0, \delta'_n)$ has cardinality bounded independent of $n$. 
 
 Since $\Theta_{S'}(\gamma'_0, \delta'_n) = \Theta_S(\gamma_0, T_{\bar{v}_n})$ we actually know that the latter consists of an ordered set $(I(\phi_1),  \ldots, I(\phi_l))$ where $I(\phi_i)$ looks like Theorem \ref{generalangle} and the rest of the angles in $\Theta_S(\gamma_0, \delta_{v_n})$ has uniformly bounded cardinality independent of $n$. Comparing the two descriptions of $\Theta_{S'}(\gamma'_0, \delta'_n) = \Theta_S(\gamma_0, T_{\bar{v}_n})$ we conclude the lemma.
 \end{proof}
 Recall that $L$ is the limit of a sequence of simple closed geodesics and there are only finitely many isolated leaves (in any geodesic lamination; Theorem \ref{structure}) in $L$ that spiral around $\alpha'_i$. Hence for each $i$ there are a finite and equal number of leaves spiraling around $\alpha'_i$ from both sides in the same direction. Let $\xi_i$ be the number of leaves that spiral around $\alpha'_i$ from one side. Since $L$ is the limit of $(\delta'_n)$ it follows that $\iota(\alpha'_i, \delta'_n) = \xi_i$.
\subsection*{Untwisting}
 From the above observations it is reasonable to think that, up to extracting a further subsequence, $(\delta'_n)$ are the images of a fixed simple closed geodesic $\delta'_0 \in \mathcal{G}(S')$ under various combinations of Dehn twists along $\alpha_i$s. We make this precise in the next proposition.
\begin{prop}\label{fixed}
 There is a subsequence $\delta'_{m_n}$ of $\delta'_n$ and a simple closed geodesic $\delta'_0$ such that
  \begin{equation}
   \delta'_{m_n} = \prod_{i=1}^{l} \mathcal{D}_{\alpha'_i}^{s^i_n}(\delta'_0)
  \end{equation}
 where $\iota(\gamma', \delta'_0) \le N_{\gamma'_0} + \sum_{i=1}^l {\xi_i}\cdot {\iota(\gamma', \alpha'_i)}$ and $N_{\gamma'_0}$ is the number from Lemma \ref{intersectionbound1}.
\end{prop}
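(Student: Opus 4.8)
The plan is to extract from the sequence $(\delta'_n)$ a subsequence along which the ``untwisted'' geodesics stabilize. First I would make precise the untwisting operation: given $\delta'_n$, define
\[
\widetilde{\delta'_n} = \prod_{i=1}^{l} \mathcal{D}_{\alpha'_i}^{-t^i_n}(\delta'_n),
\]
where $t^i_n$ is chosen (as the signed twisting number of $\delta'_n$ about $\alpha'_i$) so that $\widetilde{\delta'_n}$ intersects each collar $\mathcal{C}_{\alpha'_i}$ in a controlled way. The right notion of $t^i_n$ comes from the convergence $\delta'_n \to L$: for $n$ large each half-leaf of $L$ near $\alpha'_i$ spirals in a fixed direction, so there are exactly $\xi_i$ strands of $\delta'_n$ entering $\mathcal{C}_{\alpha'_i}$ from each side, and each such strand winds roughly $t^i_n$ times, with $t^i_n \to \infty$ (this is the content of the discussion preceding the proposition together with Remark \ref{eachintersect}). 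Set $t^i_n$ to be, say, the number of Dehn twists that brings the strands of $\delta'_n$ in $\mathcal{C}_{\alpha'_i}$ to almost radial position.

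Next I would bound $\iota(\gamma'_0, \widetilde{\delta'_n})$ uniformly in $n$. By Lemma \ref{intersectionbound1}, all but $N_{\gamma'_0}$ of the intersection points between $\gamma'_0$ and $L_{\gamma'_0}$ correspond to a spiraling around one of the $\alpha'_i$. Since $\delta'_n \to L$, for $n$ large the intersection points of $\gamma'_0 \cap \delta'_n$ split into two groups: those that are ``close to'' the spiraling part of $L$ near some $\alpha'_i$, of which there are $\approx t^i_n \cdot \xi_i \cdot \iota(\gamma'_0,\alpha'_i)$ by Theorem \ref{generalangle}/Remark \ref{eachintersect}, and a bounded number (at most $N_{\gamma'_0}$ plus the intersections contributed by the $\xi_i$ strands passing through each collar without the long winding) coming from the rest. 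Untwisting removes precisely the winding contribution: applying $\mathcal{D}_{\alpha'_i}^{-t^i_n}$ cancels the $t^i_n$-linear term (here I would invoke Proposition \ref{intersection}(2)--(3) applied to $\widetilde{\delta'_n} = T_{-t^1_n,\dots,-t^l_n}(\delta'_n)$ read backwards, i.e.\ $\delta'_n = T_{t^1_n,\dots,t^l_n}(\widetilde{\delta'_n})$). What remains is bounded by $N_{\gamma'_0} + \sum_i \xi_i \iota(\gamma'_0,\alpha'_i)$, since each of the $\xi_i$ strands of $\widetilde{\delta'_n}$ in $\mathcal{C}_{\alpha'_i}$ is now almost radial and hence meets $\gamma'_0|_{\mathcal{C}_{\alpha'_i}}$ at most $\iota(\gamma'_0,\alpha'_i)$ times up to a bounded error absorbed into $N_{\gamma'_0}$.

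Finally, the uniform bound $\iota(\gamma'_0, \widetilde{\delta'_n}) \le N_{\gamma'_0} + \sum_i \xi_i \iota(\gamma'_0,\alpha'_i)$ together with the finiteness of simple closed geodesics of bounded length (equivalently, bounded intersection number with a fixed curve) on $S'$ lets me pass to a subsequence $m_n$ along which $\widetilde{\delta'_{m_n}}$ equals a single fixed simple closed geodesic $\delta'_0$. Setting $s^i_n = t^i_{m_n}$ gives $\delta'_{m_n} = \prod_{i=1}^l \mathcal{D}_{\alpha'_i}^{s^i_n}(\delta'_0)$ with the desired intersection bound, since $\delta'_0 = \widetilde{\delta'_{m_n}}$ for any $n$.

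I expect the main obstacle to be the bookkeeping in the second step: defining the twisting numbers $t^i_n$ canonically enough that $\widetilde{\delta'_n}$ is genuinely well-defined (not just up to a bounded ambiguity), and verifying that the ``leftover'' intersections — those not accounted for by spiraling — are counted correctly, so that the constant is exactly $N_{\gamma'_0} + \sum_i \xi_i \iota(\gamma'_0,\alpha'_i)$ rather than something larger. This requires combining the Chabauty convergence $\delta'_n \to L$ with the structural description of $L$ near each $\alpha'_i$ from Lemma \ref{anglelamination} and the collar geometry, and making sure that the different collars $\mathcal{C}_{\alpha'_i}$ (which are disjoint) contribute independently, as in the proof of Proposition \ref{length}.
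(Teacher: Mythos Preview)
Your untwisting strategy and the intersection bound for $\widetilde{\delta'_n}$ are essentially what the paper does (the paper takes $\beta_n$ to be the outright minimizer of $\iota(\gamma'_0, T_{s_1,\dots,s_l}(\delta'_n))$ over $\mathbb{Z}^l$, which sidesteps the bookkeeping you flag at the end, but the effect is the same).

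The genuine gap is in your final step. The parenthetical ``equivalently, bounded intersection number with a fixed curve'' is false: a single simple closed geodesic $\gamma'_0$ is not filling, so there are infinitely many simple closed geodesics on $S'$ with intersection number at most any fixed bound with $\gamma'_0$ (e.g.\ Dehn twists of anything along a curve disjoint from $\gamma'_0$). So the uniform bound $\iota(\gamma'_0,\widetilde{\delta'_n}) \le N_{\gamma'_0} + \sum_i \xi_i\,\iota(\gamma'_0,\alpha'_i)$ by itself does \emph{not} let you extract a constant subsequence, and the proof as written does not close.

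What the paper does to fill this gap is nontrivial and uses input you have not invoked. Inside the proof of the proposition, the paper first establishes (as Lemma~\ref{pantsrecovered}) that $l = 3g-3$ and the $\alpha'_i$ form a full pants decomposition of $S'$; this step uses the asymptotic comparison $\#|\mathcal{P}^\epsilon_\phi| \approx \xi_i s^i_n \approx \iota(\gamma_0,\alpha_i) v_{i,n}$ together with the crucial hypothesis $v_{i+1,n}/v_{i,n} \to 0$, which forces distinct $\alpha_j$ to correspond to distinct $\alpha'_j$. Only then can one argue that $\ell(\beta_n)$ is bounded: since $\iota(\beta_n,\alpha'_i) = \xi_i$ for every pants curve, each $\beta_n$ meets each pair of pants in a bounded number of arcs; if some such arc had unbounded length it would have to twist around a cuff $\alpha'_i$, forcing $\iota(\gamma'_0,\beta_n) \to \infty$ and contradicting the intersection bound. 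Bounded length then gives finiteness and the desired constant subsequence. You need to insert this pants-decomposition step (or an equivalent filling argument) before you can pass to $\delta'_0$.
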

\begin{proof}
 Let us start by recalling that a sub-lamination of $L$ spirals around $\alpha'_1, ..., \alpha'_l$. Since $\delta'_n \to L$ it follows that for $n$ sufficiently large $\delta'_n$ has {\it large number of twists} around each $\alpha'_i$. Hence applying Dehn twists to $\delta'_n$ along $\alpha'_i$s we can get simple closed geodesics that intersect $\gamma'_0$ fewer of times than $\delta'_n$ does. Following our notations from \S3 for $(s_1, \cdots, s_k) \in \mathbb{Z}^k$ let $T_{s_1, \cdots, s_k}(\delta'_n)$ denote the simple closed geodesic freely homotopic to $D_{\alpha'_1}^{s_1} \circ \cdots \circ D_{\alpha'_k}^{s_k}(\delta'_n)$. Consider a simple closed geodesic $\beta_n$ such that
 \begin{equation}
 	\iota(\gamma'_0, \beta_n) = \min_{(s_1, \cdots, s_k) \in \mathbb{Z}^k} \iota(\gamma'_0, T_{s_1, \cdots, s_k}(\delta'_n)).
 \end{equation} 
 A fairly straight forward topological argument provides that
 \begin{equation}\label{intrscbound}
 \iota(\gamma'_0, \beta_n) \le N_{\gamma'_0} + \sum_{i=1}^l {\xi_i}\cdot {\iota(\gamma'_0, \alpha'_i)}.
 \end{equation}
Let $\delta'_n =  T_{s_1^n, \cdots, s_k^n}(\beta_n)$. Now estimate the number $l$.

 \begin{lem}\label{pantsrecovered}
 The collection $\{ \alpha'_i: i =1, 2, ..., l \}$ forms a pants decomposition $\mathcal{P}'$ of $S'$. After a rearrangement of the indices
 \begin{equation*}
  \Theta(\gamma_0, \alpha_i)= \Theta(\gamma'_0, \alpha'_i).
 \end{equation*}
\end{lem}
\begin{proof}
 The angle set $\Theta(\gamma'_0, \cup_{i=1}^l \alpha'_i)$ is the set of accumulation points of $\Theta(\gamma', L_{\gamma'})$ and by Lemma \ref{accumulation} we have
 \begin{equation*}
 \Theta(\gamma'_0, \cup_{i=1}^l \alpha'_i) = \Theta(\gamma_0, \mathcal{P}_0).
 \end{equation*}
 Now fix one $i$ and for $\alpha_i$ consider a $\alpha'_i$ for which $\Phi(\gamma'_0, \alpha'_i) \cap \Phi(\gamma_0, \alpha_i) \neq \emptyset$. From \S4 and the last equality of angle sets we known that there are at most two choices for this.

 Consider $\phi \in \Phi(\gamma'_0, \alpha'_i) \cap \Phi(\gamma_0, \alpha_i)$. On $S'$ let $\phi$ be the angle of intersection between the subarc $\gamma'_i$ of $\gamma'_0|_{\mathcal{C}_{\alpha'_i}}$ and $\alpha'_i$. Recall the ordered subset $\mathcal{P}^\epsilon_\phi(\gamma'_0, \delta'_n)$ of $\Theta(\gamma'_0, \delta'_n)$ that consists of angles in $\Theta(\gamma'_i, \delta'_n)$ with magnitude in $(\phi-\epsilon, \phi + \epsilon)$. By Theorem \ref{generalangle} and Remark \ref{anglepatch} we have the asymptotic 
 \begin{equation*}
  \#|\mathcal{P}^\epsilon_\phi(\Theta(\gamma'_0, \delta'_n))| \approx  s^i_n \cdot \xi_i.
 \end{equation*}
 From our construction we know on the other hand that
 \begin{equation*}
  \#|\mathcal{P}^\epsilon_\phi(\gamma_0, T_{\bar{v}_n}(\gamma_0))| \approx \iota(\gamma_0, \alpha_i) \cdot v_{i, n}.
 \end{equation*}
 By Lemma \ref{intersectionbound1} the last two asymptotic counts are comparable i.e.
 \begin{equation}\label{twistnumb}
  \iota(\gamma_0, \alpha_i)\cdot v_{i, n} \approx {\xi_i}\cdot{s^i_n}.
 \end{equation}
 At this point we recall our choice:
 \begin{equation}\label{ratiozero}
  \lim_{n \to \infty} \frac{v_{i+1, n}}{v_{i, n}} = 0 ~~ \textrm{for all} ~~ i= 1, \cdots, 3g-3.
 \end{equation}
 By Lemma \ref{accumulation} we know that for any $\phi_1 \in \Theta_S(\gamma_0, \mathcal{P}_0)$ there is an $\alpha'_i$ such that $\phi_1 \in \Theta_{S'}(\gamma'_0, \alpha'_i)$. To estimate the number $l$ we first count how many angles in $\Theta_{S'}(\gamma'_0, \cup_{i=1}^l \alpha'_i)$ can belong to the same $\Theta_{S'}(\gamma'_0, \alpha'_i)$. If $\phi_1, \phi_2 \in \Theta_{S'}(\gamma'_0, \alpha'_i)$ then by \eqref{twistnumb} and \eqref{ratiozero} it follows that there is exactly one $j$ such that $\Theta_S(\gamma_0, \alpha_j) = (\phi_1, \phi_2)$. In particular, from the special properties of $\mathcal{P}_0$ from Theorem \ref{pants}, it follows that at most two angles in $\Theta_{S'}(\gamma'_0, \cup_{i=1}^l \alpha'_i)$ can belong to the same $\Theta_{S'}(\gamma'_0, \alpha'_i)$ and that happens only if they belong to one of the $\Theta_S(\gamma_0, \alpha_j)$. Hence $l$ is at least $3g-3$. Since $\alpha'_i$s are mutually disjoint, this number is the maximal possible. Therefore every $\alpha_i$ corresponds to a unique $\alpha'_i$ such that $\Theta_S(\gamma_0, \alpha_i) = \Theta_{S'}(\gamma'_0, \alpha'_i)$ and we have a pants decomposition $\mathcal{P}'_0 = \{ \alpha'_i: i =1, 2, ..., 3g-3 \}$ of $S'$.
\end{proof}
 Now we are ready to finish the proof of our proposition. It suffices to show that $\ell(\beta_n)$ is uniformly bounded. We argue by contradiction and assume that $\ell(\beta_n)$ is unbounded. In particular, there is at least one pair of pants $P$ determined by $\mathcal{P}'_0$ such that the length of $\beta_n$ restricted to $P$ do not stay bounded. Now recall that for each $i$ we have $\iota(\beta_n, \alpha'_i) = \xi_i$, a fixed finite number determined by $L$. Hence one of the subarcs of $\beta_n$ stays entirely inside $P$ whose length does not stay bounded. This implies that this subarc twists around one of the $\alpha'_i$ in $\partial{P}$ a large number of times. In particular, $\iota(\gamma'_0, \beta_n)$ does not stay bounded. This is a contradiction to \eqref{intrscbound}.
\end{proof}
 The only part of Theorem \ref{geodesic-pants} that remains to be proven is the following.
\begin{lem}
 After rearranging the indices according to Lemma \ref{pantsrecovered} for each $i=1, ..., 3g-3$ we have $\ell(\alpha'_i) = \ell(\alpha_i)$.
\end{lem}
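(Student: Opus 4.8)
The plan is to compare the asymptotic growth of $\ell_S(T_{\bar v_n}(\gamma_0))$ with that of $\ell_{S'}(\delta'_{m_n})$ via Proposition \ref{length}, and then to peel off the equalities $\ell(\alpha'_i)=\ell(\alpha_i)$ one index at a time using the ratio hypothesis \eqref{ratiozero}. Throughout I would work along the subsequence $(m_n)$ of Proposition \ref{fixed}, with the indices of $\mathcal{P}_0$ and $\mathcal{P}'_0$ matched as in Lemma \ref{pantsrecovered}.

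First I would apply Proposition \ref{length} on $S$ to the disjoint geodesics $\mathcal{P}_0=\{\alpha_i\}$ with base curve $\gamma_0$, obtaining non-negative integers $k_i=k_i(\alpha_i,\gamma_0)$ with
\[
\Big|\,\ell_S(T_{\bar v_{m_n}}(\gamma_0))-\sum_{i=1}^{3g-3}\iota(\alpha_i,\gamma_0)\,v_{i,m_n}\,\ell(\alpha_i)\,\Big|\le B_0
\]
for $n$ large, where $B_0$ is independent of $n$ (the lower bound of Proposition \ref{length} costs only $\sum_i\iota(\alpha_i,\gamma_0)k_i\ell(\alpha_i)$ and the upper bound only $\ell(\gamma_0)$). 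Then I would apply Proposition \ref{length} on $S'$ to $\mathcal{P}'_0=\{\alpha'_i\}$ with base curve $\delta'_0$ from Proposition \ref{fixed}; since Dehn twisting along any $\alpha'_j$ preserves geometric intersection with $\alpha'_i$, one has $\iota(\alpha'_i,\delta'_0)=\iota(\alpha'_i,\delta'_n)=\xi_i$, giving
\[
\Big|\,\ell_{S'}(\delta'_{m_n})-\sum_{i=1}^{3g-3}\xi_i\,s^i_n\,\ell(\alpha'_i)\,\Big|\le B_1
\]
with $B_1$ independent of $n$.

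Next I would combine the two estimates. As $\delta'_n$ was chosen so that $\ell_{S'}(\delta'_n)=\ell_S(T_{\bar v_n}(\gamma_0))$, the left-hand sides agree; substituting the twist-parameter comparison \eqref{twistnumb}, namely $\xi_i s^i_n=\iota(\alpha_i,\gamma_0)v_{i,m_n}+O(1)$ uniformly in $n$, into the second estimate and subtracting the first gives
\[
\sum_{i=1}^{3g-3}\iota(\alpha_i,\gamma_0)\,v_{i,m_n}\big(\ell(\alpha'_i)-\ell(\alpha_i)\big)=O(1),
\]
bounded by a constant independent of $n$. Finally I would use \eqref{ratiozero}, which forces $v_{j,m_n}/v_{i,m_n}\to 0$ for $j>i$: dividing the identity above by $v_{1,m_n}$ and letting $n\to\infty$, all terms with $j\ge 2$ and the right-hand side tend to $0$, so $\iota(\alpha_1,\gamma_0)\big(\ell(\alpha'_1)-\ell(\alpha_1)\big)=0$, whence $\ell(\alpha'_1)=\ell(\alpha_1)$ since $\iota(\alpha_1,\gamma_0)\ge 1$; then inductively, once $\ell(\alpha'_j)=\ell(\alpha_j)$ for $j<i$ those terms drop out and dividing by $v_{i,m_n}$ yields $\ell(\alpha'_i)=\ell(\alpha_i)$. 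Together with Lemma \ref{pantsrecovered} this supplies all the hypotheses of Theorem \ref{markedrigidity}, so $S\cong S'$.

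I expect the main obstacle to be the uniformity of all the error terms in $n$: the scheme only works because the constants $k_i$ in Proposition \ref{length} depend solely on the fixed base curves $\gamma_0$ and $\delta'_0$ rather than on the twisting exponents, because $\iota(\delta'_0,\alpha'_i)$ is the $n$-independent number $\xi_i$, and because of the boundedness built into the relation $\approx$ in \eqref{twistnumb}. Once these uniformities are in hand, the ratio hypothesis does the rest more or less mechanically.
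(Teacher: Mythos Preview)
Your proof is correct and follows essentially the same approach as the paper: apply Proposition \ref{length} on both $S$ and $S'$ to bound the length up to $O(1)$, use the equality $\ell_{S'}(\delta'_{m_n})=\ell_S(T_{\bar v_{m_n}}(\gamma_0))$ together with the twist comparison \eqref{twistnumb}, and then exploit \eqref{ratiozero} to extract $\ell(\alpha'_i)=\ell(\alpha_i)$ inductively in $i$. The only cosmetic difference is that the paper computes limits of ratios such as $\ell(T_{\bar v_{m_n}}(\gamma_0))/v_{1,m_n}$ directly, whereas you first subtract to get a single bounded sum and then divide; your packaging is arguably cleaner but the content is identical.
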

\begin{proof}
 Recall that our geodesic $T_{\bar{v}_n}(\gamma_0)$ is the geodesic freely homotopic to $D_{\alpha_1}^{v_{1, n}} \circ \cdots D_{\alpha_{3g-3}}^{v_{3g-3, n}}(\gamma_0).$ Thus we have the following length comparison from Theorem \ref{length}
\begin{equation}\label{lengthcomparison1}
  \sum_{i=1}^{3g-3} \iota(\gamma_0, \alpha_i)\cdot(v_{i, n}-k_i)\cdot\ell(\alpha_i) \le \ell(T_{\bar{v}_n}(\gamma_0)) $$$$\le \ell(\gamma_0) + \sum_{i=1}^{3g-3} \iota(\gamma_0, \alpha_i)\cdot{v_{i, n}}\cdot\ell(\alpha_i)
\end{equation}
where $k_i$ are some fixed positive integers depending on $\alpha_1, ..., \alpha_{3g-3}, \gamma_0$. By the last proposition we also know that $\delta'_{m_n}$ is the geodesic freely homotopic to $\mathcal{D}_{\alpha'_1}^{s^1_n} \circ \cdots \circ\mathcal{D}_{\alpha'_{3g-3}}^{s^{3g-3}_n}(\delta'_0)$ which provides via Theorem \ref{length}
\begin{equation}\label{lengthcomparison2}
\sum_{i=1}^{3g-3} {\xi_i}\cdot(s^i_n - k'_i)\cdot \ell(\alpha'_i) \le \ell(\delta'_{m_n}) \le \ell(\delta'_0) + \sum_{i=1}^{3g-3} {\xi_i}\cdot{s^i_m}\cdot\ell(\alpha'_i)
\end{equation}
where $k'_i$ are fixed some positive integers depending on $\alpha'_1, ..., \alpha'_{3g-3}, \delta'_0$. The rest of the arguments consist of computing some limits using: $(1)$ the equality $\ell(\delta_{v_n}) = \ell(\delta'_n)$, $(2)$ the inequalities \eqref{lengthcomparison1} and \eqref{lengthcomparison2} and $(3)$ the asymptotic behavior \eqref{twistnumb}. For example, to prove $\ell(\alpha'_1) = \ell(\alpha_1)$ we use \eqref{twistnumb} to find that  \begin{equation*}
  \lim_{n \to \infty} \frac{\ell(\delta_{v_{m_n}})}{\iota(\gamma_0, \alpha_1)\cdot{v_{1, m_n}}} = \lim_{n \to \infty} \frac{\ell(\delta'_{m_n})}{{\xi_1}\cdot{s^1_{m_n}}}.
 \end{equation*}
Using \eqref{ratiozero} we observe that the left limit is $\ell(\alpha_1)$ by \eqref{lengthcomparison1} and the right limit is $\ell(\alpha'_1)$ by \eqref{lengthcomparison2}. Now we use induction and assume $\ell(\alpha_i) = \ell(\alpha'_i)$ for $i \le k-1$. Using \eqref{twistnumb} once again we obtain the equality 
\begin{equation*}
\lim_{n \to \infty} \frac{\ell(\delta_{v_{m_n}}) - \sum_{i=1}^{k-1}\iota(\gamma
	_0, \alpha_i)\cdot{v_{i, m_n}}\ell(\alpha_i)}{\iota(\gamma_0, \alpha_k)\cdot {v_{k, m_n}}} = \lim_{n \to \infty} \frac{\ell(\delta'_{m_n}) - \sum_{i=1}^{k-1}{\xi_i} \cdot {s^1_{m_n}}\ell(\alpha'_i)}{{\xi_k}\cdot{s^k_{m_n}}}.	
\end{equation*}
As above, using \eqref{ratiozero} we can observe that by \eqref{lengthcomparison1} the left limit is $\ell(\alpha_k)$ and by \eqref{lengthcomparison2} the right limit is $\ell(\alpha'_k)$.
 \end{proof}
 \section{Appendix}
 In this small section we explain some basic results used in the paper that are probably know to experts. 
 \subsection{Lengths of end-to-end arcs}
 The first result is about the length of an end-to-end arc inside the collar $\mathcal{C}_\alpha$ around $\alpha$.
 \begin{lem}\label{length1}
 	Let $\gamma$ be an end-to-end geodesic arc inside the collar $\mathcal{C}_\alpha$ around $\alpha$. Let $\eta$ be an almost radial arc in $\mathcal{C}_\alpha$. Then
 	\begin{equation*}
 		\ell(\gamma) \ge (\iota(\gamma, \eta) - 2)\ell(\alpha).
 	\end{equation*}
 \end{lem}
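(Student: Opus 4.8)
The plan is to pass to Fermi coordinates on $\mathcal{C}_\alpha$ and then lift everything to the universal cover of the collar, turning the statement into an elementary fact about graphs of real functions. Identify $\mathcal{C}_\alpha$ with $[-w(\alpha),w(\alpha)]\times\mathbb{S}^1$, $\mathbb{S}^1=\mathbb{R}/\mathbb{Z}$, carrying the metric $dr^2+\ell(\alpha)^2\cosh^2(r)\,d\theta^2$, so that each circle $\{r\}\times\mathbb{S}^1$ has length $\ell(\alpha)\cosh(r)\ge\ell(\alpha)$. By definition $\gamma$ is the graph of a smooth map $f\colon[-w(\alpha),w(\alpha)]\to\mathbb{S}^1$ and $\eta$ is the graph of some $g$; since $\eta$ is almost radial it misses a radial arc $\eta_\phi$, i.e.\ $g$ never takes the value $\phi$. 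In the universal cover $\widetilde{\mathcal{C}_\alpha}=[-w(\alpha),w(\alpha)]\times\mathbb{R}$ the full preimage of $\eta$ is a disjoint union $\bigsqcup_{k\in\mathbb{Z}}\tilde\eta_k$, where $\tilde\eta_k$ is the graph of $\tilde g+k$ for a fixed lift $\tilde g$ of $g$; as $\tilde g$ is continuous and avoids $\phi+\mathbb{Z}$ we may assume its image lies in $(\phi,\phi+1)$, so $\tilde\eta_k$ is confined to the open strip $\{\phi+k<\theta<\phi+k+1\}$ and these strips are pairwise disjoint. This confinement of the lifts of $\eta$ to unit-width strips is exactly where the hypothesis that $\eta$ be almost radial enters.

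Next I would identify $\iota(\gamma,\eta)$ with a count of these strips. Fix a lift $\tilde\gamma$ of $\gamma$, the graph of a lift $\tilde f$ of $f$. Since $\gamma$ is simply connected the covering projection restricts to a homeomorphism $\tilde\gamma\to\gamma$, so $\iota(\gamma,\eta)=\#\bigl(\tilde\gamma\cap\bigsqcup_k\tilde\eta_k\bigr)=\sum_k\#(\tilde\gamma\cap\tilde\eta_k)$. The statement is trivial if $\gamma=\eta$, so assume $\gamma\neq\eta$; then $\tilde\gamma$ and each $\tilde\eta_k$ are arcs of distinct complete geodesics of $\mathbb{H}^2$ (the universal cover of $S$, inside which $\widetilde{\mathcal{C}_\alpha}$ embeds), hence cross at most once. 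Therefore $\iota(\gamma,\eta)=|K|$ with $K=\{k\in\mathbb{Z}\colon\tilde\gamma\cap\tilde\eta_k\neq\emptyset\}$.

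Finally I would deduce the length bound. For each $k\in K$ pick $r_k$ with $\tilde f(r_k)=\tilde g(r_k)+k\in(\phi+k,\phi+k+1)$; writing $k_{\min}=\min K$, $k_{\max}=\max K$ and using $|K|\le k_{\max}-k_{\min}+1$, this gives $\max\tilde f-\min\tilde f\ge\tilde f(r_{k_{\max}})-\tilde f(r_{k_{\min}})>(k_{\max}-k_{\min})-1\ge\iota(\gamma,\eta)-2$. On the other hand, parametrising $\gamma$ by $r$, the identity $\ell(\gamma)=\int_{-w(\alpha)}^{w(\alpha)}\sqrt{1+\ell(\alpha)^2\cosh^2(r)\,f'(r)^2}\,dr$ together with $\cosh\ge1$ gives $\ell(\gamma)\ge\ell(\alpha)\int_{-w(\alpha)}^{w(\alpha)}|\tilde f'(r)|\,dr$, and the last integral is the total variation of $\tilde f$ on $[-w(\alpha),w(\alpha)]$, hence at least $\max\tilde f-\min\tilde f$. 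Combining the two estimates yields $\ell(\gamma)>(\iota(\gamma,\eta)-2)\ell(\alpha)$, which is slightly stronger than claimed. The one step I expect to need care is the equality $\iota(\gamma,\eta)=|K|$: it rests both on the strip confinement above — and so genuinely uses that $\eta$ is almost radial, since otherwise the lifts of $\eta$ spread over arbitrarily wide strips and the count breaks down — and on the elementary fact that two distinct complete geodesics of $\mathbb{H}^2$ meet in at most one point.
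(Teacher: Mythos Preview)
Your argument is correct. Both proofs hinge on the same idea—the radial arc $\eta_\phi$ that $\eta$ avoids—but use it differently. The paper first invokes Lemma~\ref{parallel} to pass from $\eta$ to $\eta_\phi$, losing one intersection, and then cuts $\gamma$ at its intersections with $\eta_\phi$; each resulting subarc has surjective $\theta$-projection onto $\alpha$ and hence length at least $\ell(\alpha)$, and summing gives the bound. You instead lift to the universal cover, confine the lifts $\tilde\eta_k$ to the unit strips $\{\phi+k<\theta<\phi+k+1\}$ cut out by lifts of $\eta_\phi$, count intersections there, and then bound $\ell(\gamma)$ below by $\ell(\alpha)$ times the total variation of $\tilde f$. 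Your route is self-contained—it does not call Lemma~\ref{parallel}—and in fact reproves that lemma's content implicitly via the strip confinement; the paper's route is shorter given that Lemma~\ref{parallel} is already in hand. The ``surjective projection'' step in the paper and your ``total variation $\ge\max\tilde f-\min\tilde f$'' step are the same estimate expressed geometrically versus analytically.
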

 \begin{proof}
 Let $\eta_\phi$ be the radial arc that does not intersect $\eta$. By Lemma \ref{parallel} we obtain that
 \begin{equation}\label{length2}
 	\iota(\gamma, \eta_\phi) \ge \iota(\gamma, \eta) -1.
 \end{equation}
 Consider a subarc $\gamma_i$ of $\gamma$ between two consecutive intersections with $\eta_\phi$. The projection from this subarc to $\alpha$ via the $\theta$ coordinate (of Fermi coordinates) is surjective. Since this projection is a length decreasing: $\ell(\gamma_i) \ge \ell(\alpha)$. We obtain the lemma by adding up all the pieces of $\gamma$ between different points of intersection with $\eta_\phi$ and \eqref{length2}.
 \end{proof}
 \subsection{Uniform bound on the number of intersections}
 Our next result is used in \S 3 where we study how the length and angle sets evolve under various Dehn twists. Let $\alpha_1, ..., \alpha_k$ be $k$ mutually disjoint simple closed geodesics. Let $\gamma$ be a simple closed geodesic that intersects each $\alpha_i$. We consider the sets $$\mathcal{N}(m_1, ..., m_k, l) = \{\eta \in \mathcal{GL}(S): \iota(\eta, \alpha_i) = m_i ~~ \textrm{for} ~~ i=1, 2, ..., k ~~$$$$ \textrm{and} ~~ \iota(\eta, \gamma) \le l \}.$$ 
 Our purpose here is to understand how the intersections between $\gamma$ and various Dehn twists of $\eta$ along $\alpha_1, \cdots, \alpha_k$ are located on $S$.
 \begin{lem}\label{uniformintersection}
 Then there is a $N= N(m_1, m_2, ..., m_k, l)$ such that for any $\eta \in \mathcal{N}(m_1, ..., m_k, l)$ and any tuple $(n_1, n_2, ..., n_k) \in \mathbb{Z}^k$ one has
 \begin{equation*}
 \iota(\gamma|_{S \setminus \cup_{i=1}^k \mathcal{C}_{\alpha_i}}, T_{n_1, n_2, ..., n_k}(\eta)) \le N.
 \end{equation*}
 \end{lem}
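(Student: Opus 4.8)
The plan is to split the intersections of $\gamma$ with $T_{n_1,\dots,n_k}(\eta)$ into those occurring in the collars and those occurring in $W:=S\setminus\bigcup_{i=1}^{k}\mathcal{C}_{\alpha_i}$, to bound the total from above, the collar parts from below, and subtract. Since both $\gamma$ and $T_{n_1,\dots,n_k}(\eta)$ are geodesic they are in minimal position; after perturbing the curves $\partial\mathcal{C}_{\alpha_i}$ slightly so that no intersection point lies on them, every point of $\gamma\cap T_{n_1,\dots,n_k}(\eta)$ lies in $W$ or in exactly one $\mathcal{C}_{\alpha_i}$, and inside $W$ (respectively inside $\mathcal{C}_{\alpha_i}$) the number of such points is the geometric intersection number of the corresponding arc systems, a reducing bigon there being a bigon in $S$. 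Hence
\[
\iota\bigl(\gamma, T_{n_1,\dots,n_k}(\eta)\bigr)=\iota\bigl(\gamma|_{W}, T_{n_1,\dots,n_k}(\eta)|_{W}\bigr)+\sum_{i=1}^{k}\iota\bigl(\gamma|_{\mathcal{C}_{\alpha_i}}, T_{n_1,\dots,n_k}(\eta)|_{\mathcal{C}_{\alpha_i}}\bigr),
\]
so it suffices to bound the first summand. Because the collars $\mathcal{C}_{\alpha_j}$ are pairwise disjoint, $D_{\alpha_j}$ is the identity near $\alpha_i$ for $j\neq i$ and $D_{\alpha_i}$ fixes $\alpha_i$ setwise, whence $\iota(T_{n_1,\dots,n_k}(\eta),\alpha_i)=\iota(\eta,\alpha_i)=m_i$. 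Proposition \ref{intersection}(3) (whose proof is purely topological and applies to geodesic laminations) then gives the upper bound
\[
\iota\bigl(\gamma, T_{n_1,\dots,n_k}(\eta)\bigr)\le\sum_{i=1}^{k}|n_i|\,m_i\,\iota(\alpha_i,\gamma)+\iota(\gamma,\eta)\le\sum_{i=1}^{k}|n_i|\,m_i\,\iota(\alpha_i,\gamma)+l .
\]

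The heart of the argument is the matching lower bound for the collar terms, uniform over $\eta\in\mathcal{N}(m_1,\dots,m_k,l)$ and over all $(n_1,\dots,n_k)\in\Z^k$:
\[
\iota\bigl(\gamma|_{\mathcal{C}_{\alpha_i}}, T_{n_1,\dots,n_k}(\eta)|_{\mathcal{C}_{\alpha_i}}\bigr)\ge|n_i|\,m_i\,\iota(\alpha_i,\gamma)-C_i,\qquad C_i=C_i(m_1,\dots,m_k,l).
\]
For a single fixed $\eta$ this is the estimate extracted in the last part of the proof of Proposition \ref{length} and recorded in Remark \ref{eachintersect}, applied inside $\mathcal{C}_{\alpha_i}$: the $m_i$ end-to-end geodesic arcs of $T_{n_1,\dots,n_k}(\eta)|_{\mathcal{C}_{\alpha_i}}$ are, up to a bounded number of Dehn twists around $\alpha_i$, the images under $\mathcal{D}_{\alpha_i}^{n_i}$ of the $m_i$ arcs of $\eta|_{\mathcal{C}_{\alpha_i}}$ (Lemma \ref{endtoend}, Remark \ref{almstgeod1}); each of them meets every arc of $\gamma|_{\mathcal{C}_{\alpha_i}}$ at least $|n_i|\iota(\alpha_i,\gamma)-O(1)$ times by Theorem \ref{asympt}(2), and Lemma \ref{parallel} lets one compare all $m_i$ arcs to a single reference arc at the cost of a bounded additive error. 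The new point is uniformity in $\eta$, and here the hypothesis $\iota(\eta,\gamma)\le l$ is decisive: each arc of $\eta|_{\mathcal{C}_{\alpha_i}}$ is $\mathcal{D}_{\alpha_i}^{N}$ of an almost radial arc (Lemma \ref{endtoend}), and if $|N|$ were large it would meet the fixed arcs of $\gamma|_{\mathcal{C}_{\alpha_i}}$ more than $l$ times (by the Dehn-twist asymptotics of Proposition \ref{intersection} and Theorem \ref{asympt}(2)), contradicting $\iota(\eta,\gamma)\le l$; hence $|N|$ is bounded in terms of $l$ and $\iota(\alpha_i,\gamma)$, so the configurations $\eta|_{\mathcal{C}_{\alpha_i}}$ ($\eta\in\mathcal{N}(m_1,\dots,m_k,l)$) range over a relatively compact family of arc systems, on which all the implied constants above can be taken uniform.

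Granting the uniform lower bound, subtracting it from the upper bound on the total gives
\[
\iota\bigl(\gamma|_{W}, T_{n_1,\dots,n_k}(\eta)|_{W}\bigr)\le l+\sum_{i=1}^{k}C_i ,
\]
which proves the lemma with $N:=l+\sum_{i=1}^{k}C_i$, depending only on $m_1,\dots,m_k,l$ (and on the fixed data $\gamma,\alpha_1,\dots,\alpha_k,S$). The step I expect to be the main obstacle is exactly this uniformity: upgrading the single-lamination asymptotics of \S3 (Theorem \ref{asympt}, Remark \ref{eachintersect}, Lemma \ref{parallel}) to estimates whose implied constants do not depend on $\eta\in\mathcal{N}(m_1,\dots,m_k,l)$, i.e.\ checking that, once $\iota(\eta,\gamma)\le l$ and $\iota(\eta,\alpha_i)=m_i$ are imposed, the end-to-end configurations of $\eta$ inside each collar — and thereby the points where $T_{n_1,\dots,n_k}(\eta)$ meets $\partial\mathcal{C}_{\alpha_i}$ — stay in a compact set.
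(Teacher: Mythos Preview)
Your approach is circular. You invoke Remark~\ref{eachintersect} for the lower bound on the collar intersections, but that remark is extracted from the proof of Proposition~\ref{length}, and that proof \emph{uses} Lemma~\ref{uniformintersection}: the inequality \eqref{totalint} there is obtained precisely by subtracting the bound of Lemma~\ref{uniformintersection} from the global lower bound of Proposition~\ref{intersection}(2). So even the ``single fixed $\eta$'' collar lower bound you quote already rests on the lemma you are proving. Your direct sketch does not escape this either. You correctly bound the twist of the arcs of $\eta|_{\mathcal{C}_{\alpha_i}}$ via $\iota(\eta,\gamma)\le l$, but the arcs of the \emph{geodesic} $T_{n_1,\dots,n_k}(\eta)$ inside $\mathcal{C}_{\alpha_i}$ are produced by a global straightening that depends on all of $\eta$ and on every $n_j$; they are not obtained by applying $\mathcal{D}_{\alpha_i}^{n_i}$ to $\eta|_{\mathcal{C}_{\alpha_i}}$ and then sliding endpoints. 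Controlling the resulting twist inside $\mathcal{C}_{\alpha_i}$ up to a uniform additive constant is exactly the missing content, and compactness of the arc configurations $\eta|_{\mathcal{C}_{\alpha_i}}$ alone does not supply it, since the portion of $\eta$ in $W$ (which $\iota(\eta,\gamma)\le l$ does not fully constrain) still influences the straightening.

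The paper takes a different route, arguing by contradiction and compactness of $\mathcal{GL}(S)$ in the Chabauty topology: given sequences $\eta_j\in\mathcal{N}(m_1,\dots,m_k,l)$ and tuples $(p_{1,j},\dots,p_{k,j})$ violating every fixed bound, pass to subsequential limits $\eta_j\to\eta$ and $T_{p_{1,j},\dots,p_{k,j}}(\eta_j)\to L_0$; observe that $L_0$ is also a subsequential limit of $T_{p_{1,j},\dots,p_{k,j}}(\eta)$ for the fixed limiting $\eta$, whose minimal sublaminations are just those of $\eta$ together with $\alpha_1,\dots,\alpha_k$; this forces $\iota(\gamma|_{W},L_0)<\infty$, contradicting the assumed divergence. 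Your counting scheme, had the collar lower bound been available independently, would give an explicit $N$; but in the paper's logical order that lower bound is a \emph{consequence} of Lemma~\ref{uniformintersection}, not an input to it.
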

 \begin{proof}
 	Let $\eta \in \mathcal{N}(m_1, ..., m_k, l)$. Let $(n_{1, j}, n_{2, j}, ..., n_{k, j})$ $\in \mathbb{Z}^k$ be a sequence such that $ T_{n_{1, j}, n_{2, j}, ..., n_{k, j}}(\eta) \to L$ a geodesic lamination. The structure of $L$ is easy to describe. The minimal sub-laminations of $L$ are those of $\eta$ and $\alpha_1, \cdots, \alpha_k$. Hence we can find a $N = N(L)$ such that $\iota(\gamma|_{S \setminus \cup_{i=1}^k \mathcal{C}_{\alpha_i}}, L) \le N$.
 	
 	To argue by contradiction we assume that we have $\eta_j \in \mathcal{N}(m_1,\cdots, m_k, l)$ and a sequence  $(p_{1, j}, p_{2, j}, \cdots, p_{k, j})$ $\in \mathbb{Z}^k$ be such that for all $j$ 
 	\begin{equation}\label{lower}
 		\iota(\gamma|_{S \setminus \cup_{i=1}^k \mathcal{C}_{\alpha_i}}, T_{p_{1, j}, p_{2, j}, \cdots, p_{k, j}}(\eta_j)) \ge j.
 	\end{equation}
  Up to extracting a subsequence both $(\eta_j)$ and $(T_{p_{1, j}, p_{2, j}, \cdots, p_{k, j}}(\eta_j))$ converge to some geodesic laminations. It is not that difficult to see that if $\eta_j \to \eta$ then the limit of $T_{p_{1, j}, p_{2, j}, \cdots, p_{k, j}}(\eta_j)$ up to extracting correct subsequences is the same as the limit of $T_{p_{1, j}, p_{2, j}, \cdots, p_{k, j}}(\eta)$ which we denote by $L_0$. Now $\eta \in \mathcal{N}(m_1, \cdots, m_k, l)$ so by the first paragraph of this proof we have a $N_0 = N_0(L_0)$ such that
 \begin{equation*}
 	\iota(\gamma|_{S \setminus \cup_{i=1}^k \mathcal{C}_{\alpha_i}}, L_0) \le N_0.
 \end{equation*}
 On the other hand, from \eqref{lower} and the convergence $T_{p_{1, j}, p_{2, j}, \cdots, p_{k, j}}(\eta_j) \to L$, we have 
 \begin{equation*}
 	\iota(\gamma|_{S \setminus \cup_{i=1}^k \mathcal{C}_{\alpha_i}}, L_0) = \infty.
 \end{equation*}
Hence we have a contradiction.
\end{proof} 
\subsection{Dehn twist and homotopy}
In the proofs of Theorem \ref{asympt} and Theorem \ref{pants} we have used certain qualitative facts about Dehn twists. To recall the scenario let $\alpha$ be a simple closed geodesic on $S$ and let $\mathcal{C}_\alpha$ be the collar neighborhood around $\alpha$. Fix a set of Fermi coordinates on $\mathcal{C}_\alpha$ and orient $\alpha$ according to the orientation explained in \S 1.3.1.

Let $\eta, \xi$ be two end-to-end geodesic arcs. By Theorem \ref{endtoend} there is another end-to-end geodesic arc $\xi_0$ and $m \in \mathbb{Z}$ such that $\xi = \mathcal{D}^m_\alpha(\xi_0)$ with $\eta$ and $\xi_0$ are either identical or disjoint. Clearly $\iota(\eta, \xi) = m.$ Let $x$ be the point of intersection between $\alpha$ and $\eta$ and $y$ be the point of intersection between $\alpha$ and $\xi$. Consider the right half $R$ of $\mathcal{C}_\alpha \setminus \alpha$ with respect to the starting Fermi coordinates. Let $x_1, \cdots, x_k$ be the points of intersection between $\eta$ and $\xi$ that lies in $R$ arranged in the ascending order of their distances from $x$ along $\eta$. Let $\eta_i$ be the subarc of $\eta$ between $x$ and $x_i$ and $\xi_i$ be the subarc of $\xi$ between $y$ and $x_i$. Let $\eta_i: [0, 1] \to R$ and $\xi_i: [0, 1] \to R$ be their parametrization such that $\eta_i(0) = x_i = \xi_i(0)$ and $\eta_i(1)=x, \xi_i(1)=y$. 
\begin{lem}
There is a smooth homotopy $H: [0, 1] \times [0, 1] \to R$ between $\eta_i$ and $\xi_i$ such that: $H(s, 0) = \eta_i(s), H(s, 1) = \xi_i(s)$ and $H(1, t): [0, 1] \to \alpha$ is orientation reversing.
\end{lem}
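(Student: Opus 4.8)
The plan is to pass to the infinite cyclic cover of the collar, where the geometry becomes that of a strip and the two arcs can be compared directly. Write $\widehat{\mathcal C}_\alpha \cong [-w(\alpha),w(\alpha)]\times\R$ for the cover of $\mathcal C_\alpha$ obtained by unrolling the $\theta$-circle, with covering map $p$, connected preimage $\widehat\alpha=\{r=0\}\times\R$ of $\alpha$, and deck group the integer translations in the $\R$-factor. The homotopy $H$ will be built as the affine (``straight-line'') homotopy in $\widehat{\mathcal C}_\alpha$ between suitable lifts of $\eta_i$ and $\xi_i$, then pushed down by $p$; the only real issue is the direction in which its free endpoint travels along $\alpha$.

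For the construction I would use the standard fact that a geodesic meeting $\alpha$ transversally exactly once is monotone in the $r$-coordinate throughout $\mathcal C_\alpha$ (Clairaut's relation for the Killing field $\partial_\theta$ of the collar metric forbids a turning point once $|\dot r|>0$ at $r=0$). Lift $\eta$ to an arc $\widehat\eta$ disjoint from every lift of $\xi_0$ (possible since $\eta$ and $\xi_0$ are disjoint or equal), set $\widehat x=\widehat\eta\cap\widehat\alpha$, and for each $i$ let $\widehat x_i\in\widehat\eta$ be the lift of $x_i$ read off in the prescribed order; it lies on a unique lift $\widehat\xi_i$ of $\xi$, and put $\widehat y_i=\widehat\xi_i\cap\widehat\alpha$. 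With $\widehat\eta_i\subseteq\widehat\eta$ the subarc from $\widehat x_i$ to $\widehat x$ and $\widehat\sigma_i\subseteq\widehat\xi_i$ the subarc from $\widehat x_i$ to $\widehat y_i$, parametrized on $[0,1]$ from the common point $\widehat x_i$, I set $\widehat H(s,t)=(1-t)\,\widehat\eta_i(s)+t\,\widehat\sigma_i(s)$ in $(r,\theta)$-coordinates. By the monotonicity just recalled, both subarcs keep their $r$-coordinate inside $[0,r(x_i)]\subseteq[0,w(\alpha)]$, so $\widehat H$ stays in $\widehat{\mathcal C}_\alpha$; it fixes $\widehat x_i$ and at $s=1$ sweeps the segment of $\widehat\alpha$ from $\widehat x$ to $\widehat y_i$. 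After smoothing the corners, $H:=p\circ\widehat H$ is smooth, with $H(\cdot,0)=\eta_i$, $H(\cdot,1)=\xi_i$, $H(0,\cdot)\equiv x_i$, and $H(1,\cdot)$ a monotone path in $\alpha$.

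The hard part will be to show this monotone path runs \emph{against} the orientation of $\alpha$, i.e.\ that $\widehat y_i$ lies on the increasing-$\theta$ side of $\widehat x$ on $\widehat\alpha$. Since $\widehat\xi_i$ and $\widehat\eta$ are lifts of distinct geodesic arcs they meet at most once, so for $r$ below $r(x_i)$ the arc $\widehat\xi_i$ lies strictly on one side of $\widehat\eta$ and for $r$ above $r(x_i)$ strictly on the other. Now $\widehat\xi_i$ is homotopic rel endpoints to $\widehat D_\alpha^{\,m}$ applied to a lift of $\xi_0$ disjoint from $\widehat\eta$, where $\widehat D_\alpha^{\,m}(r,\theta)=(r,\theta+m(\pi-\tfrac{\pi}{w(\alpha)}r))$ and $m=\iota(\eta,\xi)\ge 1$; this map leaves $\theta$ fixed at $r=w(\alpha)$ and raises it by $m\pi>0$ at $r=0$. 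Hence the lifts of $\xi$ that actually meet $\widehat\eta$ are exactly those lying below $\widehat\eta$ near $r=w(\alpha)$ (one starting above is pushed further up and never comes back). So $\widehat\xi_i$ is below $\widehat\eta$ for $r>r(x_i)$, therefore above $\widehat\eta$ at $r=0<r(x_i)$, whence $\theta(\widehat y_i)>\theta(\widehat x)$. Finally, by the orientation convention of \S1.3.1 --- the twisted radial arc $r\mapsto\phi+\pi-\tfrac{\pi}{w(\alpha)}r$ is orientation-preserving onto $\alpha$, so the positive orientation of $\alpha$ points in the direction of \emph{decreasing} $\theta$ --- the segment from $\widehat x$ to $\widehat y_i$, and hence $H(1,\cdot)$, is orientation-reversing. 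The steps needing care are fixing the cover and the lift $\widehat\eta$ so that ``$\widehat D_\alpha^m$ raises $\theta$'' correctly singles out the relevant lifts of $\xi$, and checking that those lifts emanate from below $\widehat\eta$ at $r=w(\alpha)$; both follow from $\eta\cap\xi_0=\emptyset$ together with the positivity of $m=\iota(\eta,\xi)$.
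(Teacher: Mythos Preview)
Your proof is correct and follows essentially the same strategy as the paper's: both lift the situation to the infinite cyclic cover of the collar (the paper speaks of ``semi-annular fundamental domains,'' you of the strip $[-w(\alpha),w(\alpha)]\times\R$) and read off the homotopy and its boundary behaviour there. The paper's argument is almost entirely pictorial, while you supply the analytic details the picture encodes---the Clairaut argument for $r$-monotonicity, the explicit straight-line homotopy in Fermi coordinates, and the endpoint computation showing $\theta(\widehat y_i)>\theta(\widehat x)$ via the side-switching of the crossing lifts---so your write-up is in fact the more complete of the two. One minor remark: the affine homotopy $\widehat H$ is already smooth, so the ``smoothing the corners'' step is unnecessary.
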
 
\begin{figure}[H]
 	\includegraphics[scale=.35]{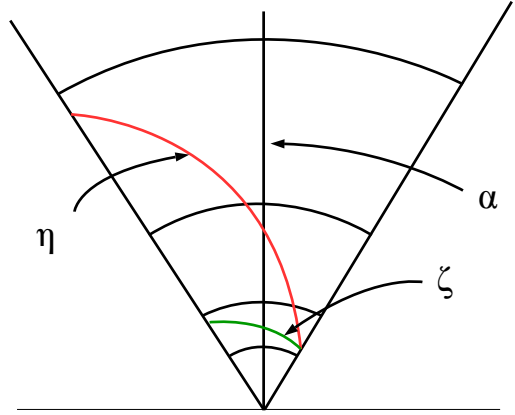}
 	\caption{Homotopy I}
 	\label{fig:homotopy1}
 \end{figure}
\begin{proof}
In a sense the above picture is our complete proof. The semi-annular regions are fundamental domains for $\mathcal{C}_\alpha$. In these fundamental domains we can explicitly draw lifts of any end-to-end geodesic arc. Namely, for $\mathcal{D}^m_\alpha(\xi_0)$ we would consider the two end points of $\xi_0$. Then we would use explicit expression for $D^m_\alpha(\xi_0)$ to draw one of its explicit lifts in $|m|$ consecutive fundamental domains of $\mathcal{C}_\alpha$. Finally to draw a lift of $\mathcal{D}^m_\alpha(\xi_0)$ explicitly we would recall that the latter is the geodesic (there is exactly one such) that joins the end points of the last lift of  $D^m_\alpha(\xi_0)$.
\end{proof}
Now we consider another Dehn twist considered in the proof of Theorem \ref{pants}. To explain our situation let us consider an $X$-piece. Let $\beta, \eta$ be the arcs as in picture. Consider the collar $\mathcal{C}_\eta$ around $\eta$ and fix a set of Fermi coordinates in it. Consider the orientation of $\eta$ determined by these coordinates as in \S 1.3.1. Observe that $\beta$ and $\mathcal{D}_\eta(\beta)$ are divided into two geodesic arcs by $\eta$. We shall consider the left half of these two arcs (determined by the Fermi coordinates). Let us denote these two arcs by $a_\beta$ and $a_{\mathcal{D}_\eta(\beta)}$.
\begin{lem}
There are parametrization $a_\beta: [0, 1] \to X, a_{\mathcal{D}_\eta(\beta)}: [0, 1] \to X$ and a smooth homotopy $H: [0, 1] \times [0, 1] \to X$ such that $H(s, 0) = a_\beta(s), H(s, 1) = a_{\mathcal{D}_\eta(\beta)}(s)$ and $H(0, t), H(1, t)$ are smooth maps from $[0, 1] \to \eta$. Moreover the last two maps are orientation preserving.
\end{lem}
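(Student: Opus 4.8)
The plan is to follow the proof of the preceding lemma: the assertion is really a picture in the fundamental domains of $\mathcal{C}_\eta$, and the homotopy is built by first interpolating the model Dehn twist and then geodesically straightening, while keeping track of what happens on the core curve $\eta$.

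First I would fix the Fermi coordinates identifying $\mathcal{C}_\eta$ with $[-w(\eta),w(\eta)]\times\mathbb{S}^1$, so that $D_\eta$ is the explicit shear $(r,\theta)\mapsto(r,\theta+\pi-\tfrac{\pi}{T}r)$ supported in $[-T,T]\times\mathbb{S}^1$. In the universal cover $\widehat{\mathcal{C}_\eta}\cong[-w(\eta),w(\eta)]\times\mathbb{R}$ of the annulus $\mathcal{C}_\eta$, the shears $(r,\theta)\mapsto\bigl(r,\theta+t(\pi-\tfrac{\pi}{T}r)\bigr)$ on $[-T,T]\times\mathbb{R}$, extended by $\theta\mapsto\theta+2t\pi$ on $[-w(\eta),-T]\times\mathbb{R}$ and by the identity on $[T,w(\eta)]\times\mathbb{R}$, form an isotopy $(t\in[0,1])$ from the identity to a lift of $D_\eta$, and its effect on the lift $\{0\}\times\mathbb{R}$ of the core (i.e.\ on $\eta$) is monotone rotation by $t\pi$. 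Applying this isotopy to a lift of $a_\beta$ and projecting back to $X$ gives a smooth homotopy from $a_\beta$ to $D_\eta(a_\beta)$ that slides each endpoint of $a_\beta$ along $\eta$ by angle $\pi$; by the very choice of the orientation of $\eta$ in \S1.3.1 — which was made precisely so that Dehn twisting moves radial arcs in the positive direction — this slide is positively oriented.

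Next I would straighten $D_\eta(a_\beta)$ to the geodesic arc $a_{\mathcal{D}_\eta(\beta)}$ by a homotopy whose endpoints stay on $\eta$ and keep moving positively. Lifting to $\mathbb{H}^2$ as in the preceding lemma, drawing the explicit lift of $D_\eta(\beta)$ across $|\iota(\beta,\eta)|$ consecutive fundamental domains of $\mathcal{C}_\eta$ and replacing it by the geodesic through its ideal endpoints yields an explicit lift of $\mathcal{D}_\eta(\beta)$; since the proof of Claim~\ref{pantsrotate} shows that $a_{\mathcal{D}_\eta(\beta)}$ meets $a_\beta$, one can choose the lifts so that the relevant lift of $a_{\mathcal{D}_\eta(\beta)}$ crosses the chosen lift of $a_\beta$ while its endpoints lie on the same pair of lifts of $\eta$, displaced further along them in the same direction. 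The straightening homotopy is then visibly a homotopy through (broken, then geodesic) arcs that slides the two endpoints positively along these lifts — this is exactly what Figures~\ref{fig:clearpic} and~\ref{fig:lifted} record. Concatenating the two homotopies, taking $a_\beta$ and $a_{\mathcal{D}_\eta(\beta)}$ to be parametrized by $H(\cdot,0)$ and $H(\cdot,1)$, and projecting back to $X$ produces the required $H$, with $H(0,\cdot)$ and $H(1,\cdot)$ orientation-preserving parametrizations of subarcs of $\eta$.

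The hard part is the orientation bookkeeping. One must check that the straightening step does not slide the endpoints backward along $\eta$ (which would cancel the twist), and that the ``left-half'' arc $a_{\mathcal{D}_\eta(\beta)}$ is genuinely the straightened image of the twisted $a_\beta$ rather than of the complementary arc of $\beta\setminus\eta$; both facts are transparent once the explicit lifts are drawn, but each must be reconciled with the conventions of \S1.3.1 and Remark~\ref{almstgeod1}. For the only use of this lemma — the proof of Claim~\ref{pantsrotate} — it is precisely the existence of such an $H$ with positively oriented endpoint motion that is needed.
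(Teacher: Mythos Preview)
Your proposal is correct and follows essentially the same strategy as the paper: build the homotopy by combining the explicit Dehn--twist isotopy in Fermi coordinates with a geodesic straightening, and read off the orientation of the endpoint motion from the convention of \S1.3.1. The paper packages this slightly differently by lifting to the annular cover (the funnel $\mathbb{T}_\eta=\mathbb{H}^2/\langle\gamma_\eta\rangle$), extending the Fermi coordinates there, and then invoking a modified form of the preceding lemma on a suitably large equidistant cylinder in $\mathbb{T}_\eta$; this avoids your two--step ``twist, then straighten'' decomposition because in the funnel both $a_\beta$ and $a_{\mathcal{D}_\eta(\beta)}$ already appear as geodesic end--to--end arcs in one ambient cylinder. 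Your more hands--on route and the paper's funnel reduction are equivalent picture arguments, and your explicit acknowledgment of the two bookkeeping checks (no backward sliding during straightening, and matching the correct half--arc) is exactly what the paper's ``modified version of the last lemma'' is absorbing.
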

\begin{figure}[H]
 	\includegraphics[scale=.35]{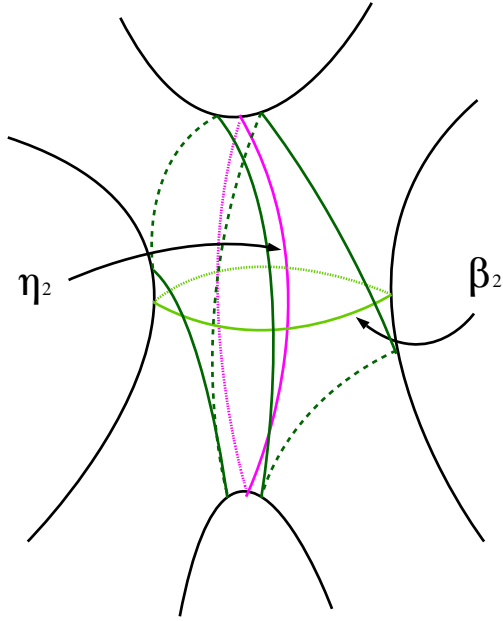}
 	\caption{Homotopy II}
 	\label{fig:homotopy2}
 \end{figure}
\begin{proof}
Consider the hyperbolic funnel $\mathbb{T}_\eta = \mathbb{H}^2/ \langle \gamma_\eta \rangle$ where $\gamma_\eta$ is a generator of $\pi_1(\mathcal{C}_\eta) \subset \pi_1(X)$. In particular, we can lift $\beta$ and $\mathcal{D}_\eta(\beta)$ on $\mathbb{T}_\eta$. Observe that the Fermi coordinates on $\mathcal{C}_\eta$ extends to a coordinate system on $\mathbb{T}_\eta$. With respect to these extended coordinates we consider cylindrical neighborhoods of $\eta$ that are bounded by curves equidistant from $\eta$ i.e. curves that look like $\{r = c \}$. Let $\mathcal{C}$ be the smallest such cylindrical neighborhood that contains the four intersections between lifts of $\beta$ and $\mathcal{D}_\eta(\beta)$ closest from $\eta$. Since Dehn twist is defined up to homotopy, it follows that each subarc of $\mathcal{D}_\eta(\beta)$ in $\mathcal{C}$ is the Dehn twist of a subarc of $\beta$ under the end point fixing homotopy. Hence the existence of our homotopy follows from a modified version of the last lemma.

\end{proof}

\end{document}